\title{Spectral sequences, d\'ecalage, and the Beilinson $t$-structure}\author{Benjamin Antieau}\date{\today}
\newcommand{\stackspace}{2.5}
\newcommand{\stack}[2][1cm]{\;\tikz[baseline, yshift=.65ex]%
    {\foreach \k [evaluate=\k as \r using (.5*#2+.5-\k)*\stackspace] in {1,...,#2}{%
    \ifodd\k{\draw[->](0,\r pt)--(#1,\r pt);}%
    \else{\draw[<-](0,\r pt)--(#1,\r pt);}\fi
    }}\;}
\DeclareSymbolFontAlphabet{\mathbb}{AMSb} 
\DeclareSymbolFontAlphabet{\mathbbl}{bbold}
\definecolor{todo}{rgb}{1,0,0}
\definecolor{conditional}{rgb}{0,1,0}
\definecolor{e-mail}{rgb}{0,.40,.80}
\definecolor{reference}{rgb}{.20,.60,.22}
\definecolor{mrnumber}{rgb}{.80,.40,0}
\definecolor{citation}{rgb}{0,.40,.80}
\renewcommand{\bf}{\bfseries}
\let\oldmarginpar\marginpar
\renewcommand\marginpar[1]{\-\oldmarginpar[\raggedleft\footnotesize #1]%
{\raggedright\footnotesize #1}}
\newcommand{\cs}{\bullet}
\newcommand{\fs}{\star}
\newcommand{\Ascr}{\mathcal{A}}
\newcommand{\Cscr}{\mathcal{C}}
\newcommand{\Dscr}{\mathcal{D}}
\newcommand{\Oscr}{\mathcal{O}}
\newcommand{\Sscr}{\mathcal{S}}
\newcommand{\B}{\mathrm{B}}
\renewcommand{\d}{\mathrm{d}}
\newcommand{\D}{\mathrm{D}}
\newcommand{\E}{\mathrm{E}}
\newcommand{\F}{\mathrm{F}}
\newcommand{\G}{\mathrm{G}}
\renewcommand{\H}{\mathrm{H}}
\renewcommand{\L}{\mathrm{L}}
\renewcommand{\P}{\mathrm{P}}
\newcommand{\R}{\mathrm{R}}
\renewcommand{\r}{\mathrm{r}}
\newcommand{\Z}{\mathrm{Z}}
\renewcommand{\1}{\mathbf{1}}
\newcommand{\bA}{\mathbf{A}}
\newcommand{\bE}{\mathbf{E}}
\newcommand{\bG}{\mathbf{G}}
\newcommand{\bS}{\mathbf{S}}
\newcommand{\bZ}{\mathbf{Z}}
\newcommand{\Sp}{\Sscr\mathrm{p}}
\newcommand{\SW}{\mathrm{SW}}
\newcommand{\Ab}{\Ascr\mathrm{b}}
\newcommand{\op}{\mathrm{op}}
\newcommand{\cofib}{\mathrm{cofib}}
\newcommand{\fib}{\mathrm{fib}}
\newcommand{\Mod}{\mathrm{Mod}}
\newcommand{\LMod}{\mathrm{LMod}}
\newcommand{\qc}{\mathrm{qc}}
\newcommand{\Perf}{\mathrm{Perf}}
\newcommand{\Ind}{\mathrm{Ind}}
\newcommand{\Alg}{\mathrm{Alg}}
\newcommand{\FD}{\mathrm{FD}}
\newcommand{\FDhat}{\widehat{\FD}}
\newcommand{\Gr}{\mathrm{Gr}}
\newcommand{\gr}{\mathrm{gr}}
\newcommand{\ev}{\mathrm{ev}}
\newcommand{\ins}{\mathrm{ins}}
\newcommand{\MU}{\mathrm{MU}}
\newcommand{\CW}{\mathrm{CW}}
\newcommand{\Dec}{\mathrm{Dec}}
\newcommand{\heart}{\heartsuit}
\newcommand{\id}{\mathrm{id}}
\newcommand{\im}{\mathrm{im}}
\renewcommand{\geq}{\geqslant}
\renewcommand{\leq}{\leqslant}
\newcommand{\Ho}{\mathrm{Ho}}
\newcommand{\TP}{\mathrm{TP}}
\newcommand{\Ch}{\mathrm{Ch}}
\newcommand{\Map}{\mathrm{Map}}
\newcommand{\bMap}{\mathbf{Map}}
\newcommand{\Hom}{\mathrm{Hom}}
\newcommand{\Fun}{\mathrm{Fun}}
\newcommand{\Gm}{\bG_{m}}
\DeclareMathOperator*{\colim}{colim}
\DeclareMathOperator*{\Tot}{Tot}
\DeclareMathOperator{\Spec}{Spec}
\newcommand{\we}{\simeq}
\newcommand{\iso}{\cong}
\theoremstyle{plain}
\newtheorem{theorem}{Theorem}[section]
\newtheorem*{theorem*}{Theorem}
\newtheorem{lemma}[theorem]{Lemma}
\newtheorem{proposition}[theorem]{Proposition}
\newtheorem{corollary}[theorem]{Corollary}
\newtheorem*{corollary*}{Corollary}
\theoremstyle{plain}
\theoremstyle{definition}
\newtheoremstyle{named}{}{}{\itshape}{}{\bfseries}{.}{.5em}{#1 \thmnote{#3}}
\theoremstyle{named}
\theoremstyle{definition}
\newtheorem{definition}[theorem]{Definition}
\newtheorem{warning}[theorem]{Warning}
\newtheorem{notation}[theorem]{Notation}
\newtheorem{fact}[theorem]{Fact}
\newtheorem{example}[theorem]{Example}
\newtheorem*{example*}{Example}
\newtheorem*{question*}{Question}
\newtheorem{construction}[theorem]{Construction}
\newtheorem{remark}[theorem]{Remark}
\begin{document}

\maketitle

\begin{abstract}
    \noindent
    This paper explains the theory of spectral sequences via d\'ecalage and the Beilinson $t$-structure.
\end{abstract}

\vspace{60pt}

\begin{figure}[h]
    \centering
    \makebox[\textwidth]{
        \begin{tabular}{||l l l l l||}
            \hline
            $\pi_*\F^\star
            M$&$\E^1_{s,t}=\pi_{s+t}\gr^{-s}M\Rightarrow\pi_{s+t}M$&$(-r,r-1)$&$\F^s{\pi_nM}=\im(\pi_n\F^sM\rightarrow\pi_nM)$&$\gr^s\pi_nM\iso\E^\infty_{-s,s+n}$\\
            \hline
            $\pi_*\F_\star
            M$&$\E^1_{s,t}=\pi_{s+t}\gr_sM\Rightarrow\pi_{s+t}M$&$(-r,r-1)$&$\F_s\pi_nM=\im(\pi_n\F_sM\rightarrow\pi_nM)$&$\gr_s\pi_nM\iso\E^\infty_{s,-s+n}$\\
            \hline
            $\H^*\F^\star
            M$&$\E_1^{s,t}=\H^{s+t}\gr^sM\Rightarrow\H^{s+t}M$&$(r,-r+1)$&$\F^s\H^nM=\im(\H^n\F^sM\rightarrow\H^nM)$&$\gr^s\H^nM\iso\E_\infty^{s,-s+n}$\\
            \hline
            $\H^*\F_\star
            M$&$\E_1^{s,t}=\H^{s+t}\gr_{-s}M\Rightarrow\H^{s+t}M$&$(r,-r+1)$&$\F_s\H^nM=\im(\H^n\F_sM\rightarrow\H^nM)$&$\gr_s\H^nM\iso\E_\infty^{-s,s+n}$\\
            \hline
        \end{tabular}
    }
    \caption{Spectral sequence conventions for decreasing filtrations $\F^\star$ and increasing
    filtrations $\F_\star$. We use $\Rightarrow$ even when no convergence is implied.}
    \label{fig:cover_ssconventions}
\end{figure}

        \begin{figure}[h]
    \centering
    \makebox[\textwidth]{
        \begin{tabular}{||l l l||}
            \hline
            $\pi_*\F^\star
            M$&$'\E^2_{s,t}=\E^1_{-t,s+2t}=\pi_{s+t}\gr^{t}M\Rightarrow\pi_{s+t}M$&$\gr^s\pi_nM\iso\E^\infty_{-s+n,s}$\\
            \hline
            $\pi_*\F_\star
            M$&$'\E^2_{s,t}=\E^1_{-t,s+2t}=\pi_{s+t}\gr_{-t}M\Rightarrow\pi_{s+t}M$&$\gr_s\pi_nM\iso\E^\infty_{s+n,-s}$\\
            \hline
            $\H^*\F^\star
            M$&$'\E_2^{s,t}=\E_1^{-t,s+2t}=\H^{s+t}\gr^{-t}M\Rightarrow\H^{s+t}M$&$\gr^s\H^nM\iso\E_\infty^{s+n,-s}$\\
            \hline
            $\H^*\F_\star
            M$&$'\E_2^{s,t}=\E_1^{-t,s+2t}=\H^{s+t}\gr_{t}M\Rightarrow\H^{s+t}M$&$\gr_s\H^nM\iso\E_\infty^{-s+n,s}$\\
            \hline
        \end{tabular}
    }
    \caption{Reindexing to $'\E^2$. The differentials have the same bidegree as
    in the $\E^1$-spectral sequences from Figure~\ref{fig:cover_ssconventions}. The
    filtrations on the abutment are also defined in the same way as in the $\E^1$ spectral
    sequences.}
    \label{fig:cover_E2ssconventions}
\end{figure}

\begin{figure}[h]
    \centering
    \makebox[\textwidth]{
        \begin{tabular}{||l l l l l||}
            \hline
            $\pi_*\F^\star
            M$&$\E^1_{s,t}=\pi_{s}\gr^{t}M\Rightarrow\pi_{s}M$&$(-1,r)$&$\F^s{\pi_nM}=\im(\pi_n\F^sM\rightarrow\pi_nM)$&$\gr^s\pi_nM\iso\E^\infty_{n,s}$\\
            \hline
            $\pi_*\F_\star
            M$&$\E^1_{s,t}=\pi_{s}\gr_{-t}M\Rightarrow\pi_{s}M$&$(-1,r)$&$\F_s\pi_nM=\im(\pi_n\F_sM\rightarrow\pi_nM)$&$\gr_s\pi_nM\iso\E^\infty_{n,-s}$\\
            \hline
            $\H^*\F^\star
            M$&$\E_1^{s,t}=\H^{s}\gr^{-t}M\Rightarrow\H^{s}M$&$(1,-r)$&$\F^s\H^nM=\im(\H^n\F^sM\rightarrow\H^nM)$&$\gr^s\H^nM\iso\E_\infty^{n,-s}$\\
            \hline
            $\H^*\F_\star
            M$&$\E_1^{s,t}=\H^{s}\gr_{t}M\Rightarrow\H^{s}M$&$(1,-r)$&$\F_s\H^nM=\im(\H^n\F_sM\rightarrow\H^nM)$&$\gr_s\H^nM\iso\E_\infty^{n,s}$\\
            \hline
        \end{tabular}
    }
    \caption{Adams indexing spectral sequence conventions.}
    \label{fig:cover_adamsssconventions}
\end{figure}

\newpage

\section{Introduction}

There has been a recent cultural movement best portrayed as:
$$\text{``there is a spectral sequence''}\Rightarrow\text{``there is a
filtration''}.$$
This move is due to the fact that filtrations admit better functorial
properties than spectral sequences.
For example, Bhatt, Morrow, and Scholze use quasisyntomic descent
in~\cite{bms2} to define new
filtrations on topological Hochschild homology and related invariants of
$p$-adic rings by reducing to particularly simple cases.
This requires descending a filtration, where descending a spectral sequence
would not classically be meaningful.

The Beilinson $t$-structure plays an important role in~\cite{bms2} and
one can also use the Beilinson $t$-structure to construct
related filtrations on integral periodic cyclic homology, as
in~\cite{antieau-derham}, or on $\TP$ of schemes in characteristic $p$, as
in~\cite{an1}.
Given a stable $\infty$-category $\Cscr$ which admits sequential limits and equipped with a
$t$-structure,
the Beilinson $t$-structure is a $t$-structure on $\F\Cscr$, the $\infty$-category of filtered
objects in $\Cscr$, whose heart is the abelian category
$\Ch^\bullet(\Cscr^\heart)$ of cochain complexes.
Given a filtration $\F^\star\in\F\Cscr$,
the cochain complexes appearing as the homotopy objects with respect to the
Beilinson $t$-structure are precisely the cochain complexes occurring on the
$\E^1$-page of the associated spectral sequence.
This $t$-structure was first defined and studied by Beilinson
in~\cite{beilinson}; see~\cite{bms2,ariotta,raksit} for recent treatments.

The goal of this paper is to explain the theory of spectral sequences as seen
through the lens of the Beilinson $t$-structure. The main construction is the d\'ecalage
$\Dec(\F)^\star$ of a filtration $\F^\star$. If $\Cscr$ additionally
admits sequential colimits, then every
filtered object $\F^\star$ of $\Cscr$ admits an underlying object $|\F^\star|$ obtained by forgetting the filtration, or taking the colimit.
Each connective cover $\tau_{\geq n}^\B(\F^\star)$ in
the Beilinson $t$-structure is, by definition, itself a filtered object in $\Cscr$. Thus, given a filtered object $\F^\star$
one can obtain a new filtered object $\Dec(\F)^\star$ by taking the
underlying objects of the
Whitehead tower $$\cdots\rightarrow|\tau_{\geq n+1}^\B(\F^\star)|\rightarrow|\tau_{\geq n}^\B(\F^\star)|\rightarrow|\tau_{\geq
n-1}^\B(\F^\star)|\rightarrow\cdots.$$ This construction defines an endomorphism
$\Dec\colon\F\Cscr\rightarrow\F\Cscr$, the d\'ecalage functor.
The main result of the paper is that d\'ecalage turns the page. Specifically, if $\Cscr$ is a
stable $\infty$-category equipped with a $t$-structure and $\F^\star\in\F\Cscr$, there is an
associated spectral sequence defined, for example, in~\cite[Sec.~1.2.2]{ha}.
Below, an isomorphism of pages between two spectral sequences includes compatibility of the
isomorphism the differentials.

\begin{theorem}[See Theorem~\ref{thm:decalage}]\label{thm:main}
    Let $\Cscr$ be a stable $\infty$-category which admits sequential limits and colimits and fix a
    $t$-structure on $\Cscr$. For $r\geq 1$,
    the $\E^r$-page of the spectral sequence for $\Dec(\F^\star)$ is naturally
    isomorphic to the $\E^{r+1}$-page of the spectral sequence for $\F^\star$.
\end{theorem}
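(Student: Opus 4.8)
The plan is to reduce the statement to a comparison of exact couples. Recall the standard fact that the spectral sequence of a filtered object $G^\star\in\F\Cscr$ is determined by the exact couple in $\Cscr^\heart$ whose two families of terms are the homotopy objects $\pi_{s+t}G^s$ of the filtration stages and the homotopy objects $\pi_{s+t}\gr^s G$ of the graded pieces, with structure maps $i,j,k$ read off from the long exact sequences attached to the cofiber sequences $G^{s+1}\rightarrow G^s\rightarrow\gr^s G$; the $\E^r$-page together with its $d_r$-differential is the $E$-term of the $(r-1)$-st derived couple. It therefore suffices to construct a natural isomorphism between the exact couple of $\Dec(\F)^\star$ and the \emph{derived} couple of the exact couple of $\F^\star$. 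Granting this, the $(r-1)$-st derived couple of the former is the $r$-th derived couple of the latter, which yields $\E^r(\Dec\F)\iso\E^{r+1}(\F)$ for every $r\geq1$ compatibly with differentials; naturality in $\F^\star$ is automatic since every construction involved is functorial.

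First I would compute the graded pieces. By definition $\Dec(\F)^s=|\tau_{\geq s}^\B\F^\star|$, and since $|{-}|$ is an exact functor of stable $\infty$-categories it carries the Beilinson cofiber sequence $\tau_{\geq s+1}^\B\F^\star\rightarrow\tau_{\geq s}^\B\F^\star\rightarrow\pi_s^\B(\F^\star)[s]$ to one computing $\gr^s\Dec(\F)\we|\pi_s^\B(\F^\star)[s]|$, where $\pi_s^\B$ denotes the $s$-th homotopy object in the Beilinson $t$-structure. The heart of that $t$-structure is $\Ch^\bullet(\Cscr^\heart)$, and by the identification recalled in the introduction $\pi_s^\B(\F^\star)$ is exactly the cochain complex forming the $s$-th row of the $\E^1$-page of $\F^\star$ with its $d_1$-differential; since the realization of an object of the Beilinson heart computes the cohomology of the corresponding cochain complex, the homotopy objects of $\gr^s\Dec(\F)$ are precisely the $\E^2$-terms of $\F^\star$. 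This identifies the $E$-part of the exact couple of $\Dec(\F)^\star$ with the $E$-part of the derived couple of $\F^\star$.

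It remains to identify the $D$-part and to match the structure maps, and this is where I expect the real work to lie. One must show that $\pi_{s+t}|\tau_{\geq s}^\B\F^\star|$ is naturally the image of the filtration-induced map $\pi_{s+t}\F^s\rightarrow\pi_{s+t}\F^{s-1}$, which is the $D$-term of the derived couple, and that under this identification the realized boundary maps of the Beilinson Whitehead tower coincide with the maps $j\circ i^{-1}$ and $k$ defining the derivation. The core lemma needed is a description of the realized Beilinson connective cover $|\tau_{\geq s}^\B\F^\star|$ as an image subobject; proving it amounts to showing that the d\'ecalage defined through the Beilinson $t$-structure reproduces Deligne's classical image-filtration formula, and it requires tracking $|{-}|$ through the Beilinson truncations together with the index shifts dictated by the $t$-structure conventions. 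Once the two exact couples are matched as bigraded objects with compatible structure maps, the isomorphism of all higher pages and their differentials is formal.
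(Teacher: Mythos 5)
Your reduction has a genuine gap at exactly the point you yourself flag as ``the core lemma,'' and the problem is not just that it is unproved: in the generality of the theorem it is not available. You need $\pi_{n}\Dec(\F)^s=\pi_{n}|\tau_{\geq s}^{\B}(\F)|$ to be identified with the image $\im(\pi_n\F^{a}\rightarrow\pi_n\F^{a-1})$ of a map between \emph{finite} filtration stages (the homotopy-level avatar of Deligne's formula from Definition~\ref{def:delignedecalage}). But $|\tau_{\geq s}^{\B}(\F)|=\colim_m\tau_{\geq s}^{\B}(\F)^m$ is a sequential colimit, and any identification of its homotopy objects with data read off from finitely many stages requires commuting $\pi_n$ past that colimit, i.e., compatibility of the $t$-structure with sequential colimits --- a hypothesis the theorem does not make (it assumes only that $\Cscr$ \emph{admits} sequential limits and colimits; compare Theorem~\ref{thm:convergence}, where precisely this compatibility is added to get image-type statements about $\pi_*$). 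For example, in $\Cscr=\D(\bZ)^{\op}$ with the opposite $t$-structure the theorem applies, but sequential colimits in $\Cscr$ are inverse limits in $\D(\bZ)$ and ${\lim}^1$-corrections obstruct image formulas of this shape. Your lemma is a theorem about strict filtered complexes (essentially what Theorem~\ref{thm:decalagecomparison} recovers) and about $t$-structures compatible with sequential colimits, not about arbitrary pairs $(\Cscr,t)$. The paper's proof of Theorem~\ref{thm:decalage} is structured precisely to avoid this: it never computes $\pi_*\Dec(\F)^s$ at all, but compares the two epi-mono factorizations defining $\E^{r+1}(\F)$ and $\E^{r}(\Dec(\F))$ using only finite windows $\gr^{[a,b)}_{\Dec}\gr^{[c,d)}_{\F}$, each carrying a finite filtration with graded pieces of known (co)connectivity, so every needed $\pi_0$-statement follows from connectivity estimates (the eight maps $\mathbf{1},\ldots,\mathbf{8}$ of the $2\times 5$-diagram); even Lemma~\ref{lem:cohomology_of_complex} is proved by a stabilization argument rather than an interchange of $\pi_*$ with colimits.

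Two further points. First, your starting move silently replaces the paper's definition of the pages (Lurie's construction, Definition~\ref{def:er}) by the derived-couple spectral sequence; that comparison is classical, but it is nowhere in this paper, and the paper in fact \emph{deduces} that Lurie's pages form a spectral sequence at all (Proposition~\ref{prop:lurie}(ii),(iii)) from the d\'ecalage theorem --- so invoking standard facts about those pages risks circularity. Second, even granting the $D$-term identification, matching the structure maps $i,j,k$ of the couple of $\Dec(\F)$ with those of the derived couple --- which is what actually makes the higher differentials agree --- is not formal; it is the analogue of the entire second half of the paper's proof (the $4\times 5$-diagram, the object $P$, and the maps $\mathbf{A}$ and $\mathbf{B}$). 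What is correct in your proposal is the $E$-part: $\gr^s_{\Dec(\F)}\we|\pi_s^{\B}(\F)[s]|$ and its homotopy objects being the $\E^2$-terms agree with Remark~\ref{rem:graded_decalage} and Lemma~\ref{lem:cohomology_of_complex}; but that alone produces the objects of the pages, not the differentials, and so does not yet prove the theorem even for $r=1$.
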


Let $\Dec^{(r-1)}(\F^\star)$ denote the $(r-1)$-fold composition of the
d\'ecalage construction.

\begin{corollary}
    The $\E^r$-page of the spectral sequence for $\F^\star$ is naturally
    isomorphic to the $\E^1$-page of the spectral sequence for
    $\Dec^{(r-1)}(\F^\star)$.
\end{corollary}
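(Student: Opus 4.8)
The plan is to deduce this by a short induction on $r$ from Theorem~\ref{thm:main}, telescoping the trade-off between the page index and the number of décalages. First I would record the degenerate case: since $\Dec^{(0)}$ is the identity endofunctor of $\F\Cscr$, the assertion for $r=1$ reads $\E^1(\F^\star)\iso\E^1(\F^\star)$ and holds tautologically.

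For $r\geq 2$ the key observation is that Theorem~\ref{thm:main} holds verbatim with $\F^\star$ replaced by an arbitrary filtered object, in particular by each iterate $\Dec^{(k)}(\F^\star)$. Reading the theorem with page index $s=r-k-1$ (legitimate exactly when $s\geq 1$, i.e.\ $k\leq r-2$) and filtration $\Dec^{(k)}(\F^\star)$, and using $\Dec(\Dec^{(k)}(\F^\star))=\Dec^{(k+1)}(\F^\star)$, yields a natural isomorphism
$$\E^{r-k-1}\bigl(\Dec^{(k+1)}(\F^\star)\bigr)\iso\E^{r-k}\bigl(\Dec^{(k)}(\F^\star)\bigr),$$
which lowers the page by one while raising the number of décalages by one. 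Composing these isomorphisms for $k=0,1,\dots,r-2$ telescopes into
$$\E^r(\F^\star)=\E^r\bigl(\Dec^{(0)}(\F^\star)\bigr)\iso\E^{r-1}\bigl(\Dec^{(1)}(\F^\star)\bigr)\iso\cdots\iso\E^1\bigl(\Dec^{(r-1)}(\F^\star)\bigr),$$
which is exactly the claim. Every page index occurring in the chain lies in the range $\{1,\dots,r\}$ on which Theorem~\ref{thm:main} is stated, so the bound $k\leq r-2$ keeps us away from any boundary where the theorem might fail to apply.

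For the two remaining assertions buried in the word ``naturally'' and in the convention that an isomorphism of pages includes compatibility with differentials, I would simply note that each isomorphism furnished by Theorem~\ref{thm:main} is natural in the filtered object and intertwines the differentials, and that both properties are preserved under composition; since $\Dec$ is an endofunctor of $\F\Cscr$, each $\Dec^{(k)}$ is functorial, so the composite above is natural in $\F^\star$ as a transformation of functors on $\F\Cscr$. I do not expect any genuine obstacle here: the entire mathematical content sits in Theorem~\ref{thm:main}, and this corollary is a purely formal bookkeeping consequence. The only point deserving mild care is the index management in the telescope, ensuring that the page index never drops below $1$, which is precisely what the constraint $k\leq r-2$ guarantees.
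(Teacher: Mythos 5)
Your proof is correct and matches the paper's own (implicit) argument: the corollary is obtained exactly by iterating Theorem~\ref{thm:main} applied to the successive d\'ecalages $\Dec^{(k)}(\F^\star)$, as the paper does in Corollary~\ref{cor:decalage}, with naturality and compatibility with differentials inherited from each application of the theorem. The only thing the paper adds that you elide is the explicit bidegree bookkeeping, recording the composite reindexing $\E^1_{-(r-1)s-rt,rs+(r+1)t}(\Dec^{(r-1)}(\F))\iso\E^{r}_{s,t}(\F)$, which your telescope produces but does not write out.
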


The corollary leads to a quick definition of the spectral sequence associated to a filtered object
$\F^\star\in\F\Cscr$.

\begin{definition}\label{def:main}
    Let $\F^\star$ be a filtered object in a stable $\infty$-category $\Cscr$
    with a $t$-structure and suppose that $\Cscr$ admits sequential limits and colimits.
    Write the associated coherent cochain complex $$\cdots\rightarrow\gr^{-1}[-1]\rightarrow\gr^0
    \rightarrow\gr^1 [1]\rightarrow\cdots$$
    obtained by pasting together the boundary maps $$\F^n
    /\F^{n+2}\rightarrow\gr^n\rightarrow\gr^{n+1}[1]$$
    and take homotopy objects with respect to the $t$-structure on $\Cscr$ to
    obtain cochain complexes
    $$\cdots\rightarrow\pi_{t+1}\gr^{-1} M\rightarrow\pi_t\gr^0
    M\rightarrow\pi_{t-1}\gr^1 M\rightarrow\cdots.$$
    These form the $\E^1$-page of the spectral sequence associated to $\F^\star$, where
    $\E^1_{s,t}=\pi_{s+t}\gr^{-s}$.
    To define the $\E^{r+1}$-page, for some $r\geq 0$, apply the d\'ecalage construction above $(r-1)$
    times and take the $\E^1$-page of the resulting filtration,
    applying the transformation $$\begin{pmatrix}-r+1&-r\\r&r+1\end{pmatrix}$$ to the indices.
\end{definition}

Definition~\ref{def:main} gives a sequence of pages for which it is not hard to prove
that the cohomology groups of one page agree with the terms on the next page. (See
Remark~\ref{rem:graded_decalage}.) In other words,
Definition~\ref{def:main} produces a spectral sequence in the sense of
any standard textbook, such as~\cite{mccleary}.
The homotopy coherent nature of the d\'ecalage functor makes it possible to make
$\infty$-categorical arguments about the nature of the spectral sequence attached to a filtered
object via Definition~\ref{def:main}.
For example, the multiplicative properties of these spectral sequences follow immediately from the
fact that the d\'ecalage functor admits a lax symmetric monoidal structure. See
Hedenlund's thesis~\cite{hedenlund-thesis} and also
Section~\ref{sec:multiplicativity}.

The content of Theorem~\ref{thm:main} is that Definition~\ref{def:main} yields
$\E^{r+1}$-pages and $d^{r+1}$-differentials which agree naturally for $r\geq 0$ with the usual
definition of the spectral sequence of a filtration, for which our reference is~\cite[Const.~1.2.2.6]{ha}.

The Beilinson $t$-structure
implements in a homotopy coherent way Deligne's d\'ecalage functor, which was
used to construct canonical mixed Hodge structures on the singular cohomology of
complex algebraic varieties in~\cite{deligne-hodge-2,deligne-hodge-3}.
Suppose that $\F^\star M^\cs$ is a strictly filtered cochain complex and
let $\Dec(\F)^\star M^\cs$ be Deligne's d\'ecalage construction
(see Definition~\ref{def:delignedecalage}), which is a new strict filtration on $M^\bullet$. One can put on each $\Dec(\F)^s M^\cs$ a secondary
filtration $\G^\diamond\Dec(\F)^s M$ in such a way that
$\G^\diamond\Dec(\F)^\star M$ becomes a strictly bifiltered cochain
complex.

\begin{theorem}[See
    Theorem~\ref{thm:decalagecomparison}]\label{thm:main_comparison}
    There is a natural map $\G^\diamond\Dec(\F)^\star M^\cs\rightarrow\F^\star
    M$ inducing an equivalence $$\G^\diamond\Dec(\F)^\star
    M^\cs\we\tau_{\geq\star}^\B(\F)M.$$
    In particular, Delgine's strict d\'ecalage functor is equivalent to the
    $\infty$-categorical d\'ecalage functor.
\end{theorem}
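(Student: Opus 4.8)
The plan is to produce an explicit strict bifiltered model for the Beilinson connective cover and to identify it with Deligne's construction. Because $\F^\star M^\cs$ is a filtration by subcomplexes, each short exact sequence $0\to\F^{p+1}M\to\F^p M\to\gr^p_\F M\to 0$ is a fiber sequence in $\Cscr$, so the naive associated graded computes the $\infty$-categorical one; the same holds for the secondary filtration $\G^\diamond$. This is what lets me translate the $\infty$-categorical truncation $\tau_{\geq\star}^\B$ into a computation with honest subquotient complexes. I will use throughout that $\F^\star$ lies in $(\F\Cscr)^\B_{\geq m}$ exactly when $\gr^n\F^\star\in\Cscr_{\geq m-n}$ for all $n$, and dually for coconnectivity, so that $\gr^n$ sends $\tau^\B_{\geq m}$ to the $t$-structure truncation $\tau_{\geq m-n}$ on $\Cscr$. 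Consequently a map of filtered objects realizes the connective cover $\tau^\B_{\geq\star}$ as soon as it does so on each associated graded.

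First I would recall Deligne's formula $\Dec(\F)^p M^n=\{x\in\F^{p+n}M^n : \d x\in\F^{p+n+1}M^{n+1}\}$ and the secondary filtration $\G^\diamond$ of Definition~\ref{def:delignedecalage}, and note that the evident inclusions of subcomplexes assemble into a natural map of bifiltered objects $\G^\diamond\Dec(\F)^\star M^\cs\to\F^\star M$. The real content is the double associated graded. I would compute $\gr^\diamond_\G\gr^\star_{\Dec\F}M$ directly from these formulas and show that it is canonically the $t$-structure truncation $\tau_{\geq\star-\diamond}\,\gr^\diamond_\F M$, up to the indexing fixed by the conventions of Figure~\ref{fig:cover_ssconventions}. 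This is a strict bifiltered refinement of Deligne's classical d\'ecalage lemma: the cocycle condition $\d x\in\F^{p+n+1}$ is precisely what cuts $\gr^\diamond_\F M$ down to its cocycles in the relevant degree, and hence produces the truncation rather than the full associated graded.

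Granting this graded identification, the equivalence is formal. Since $\gr^n$ detects both Beilinson connectivity and coconnectivity, the computation shows at once that $\G^\diamond\Dec(\F)^\star M$ is Beilinson-connective at the level recorded by $\star$ and that the cofiber of the natural map to $\F^\star M$ is Beilinson-coconnective below it; by the uniqueness of the connective cover the map exhibits $$\G^\diamond\Dec(\F)^\star M^\cs\we\tau_{\geq\star}^\B(\F)M.$$ Every step is natural in $M$. Applying the underlying-object functor $|\,\cdot\,|$, that is the colimit over the secondary index $\diamond$, collapses the left side to Deligne's strict filtration $\Dec(\F)^\star M$ and the right side to the $\infty$-categorical d\'ecalage of the introduction, so the asserted comparison of the two d\'ecalage functors is the image of this equivalence under $|\,\cdot\,|$.

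The main obstacle I expect is the graded computation together with the attendant bookkeeping of conventions. Establishing that $\gr^\diamond_\G\gr^\star_{\Dec\F}M$ is honestly a truncation of $\gr^\diamond_\F M$ --- rather than merely quasi-isomorphic to one --- is exactly where strictness is used, since only then are the relevant subquotients the naive ones and the cocycle conditions honest kernels; and matching the resulting bigraded answer to the Beilinson heart and to the $\E^1$-indexing of Figure~\ref{fig:cover_ssconventions} is the delicate but decisive point.
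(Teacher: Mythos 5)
Your proposal is correct and takes essentially the same route as the paper's proof of Theorem~\ref{thm:decalagecomparison}: reduce the claim to the associated graded identification $\gr^w_\G\Dec(\F)^s M\we\tau_{\geq s-w}\gr^w_\F M$, which strictness turns into an honest subquotient computation where the cocycle condition $\d x\in\F^{w+1}$ carves out exactly the cycles needed for the truncation, and then conclude by uniqueness of Beilinson connective covers. One small imprecision, at the same level of terseness as the paper itself: $\gr^n$ alone does not detect Beilinson coconnectivity (completeness is also required, per Definition~\ref{def:noncompletebeilinson}), but since $\G^w\Dec(\F)^sM_n=\F^wM_n$ for $w\geq s-n+1$, the cofiber of your map is degreewise eventually zero and hence complete, so the deduction goes through as stated.
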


An $\infty$-categorical form of d\'ecalage was constructed by Levine in~\cite{levine_anss} in
the context of cosimplicial spectra. Levine does not use the language of the Beilinson
$t$-structure, but does prove analogues of
Theorems~\ref{thm:main} and~\ref{thm:main_comparison};
see~\cite[Prop.~6.3]{levine_anss} and~\cite[Rem.~6.4]{levine_anss}.
Theorem~\ref{thm:main_comparison}
was discovered independently by the author around 2019 and by Bhatt, Morrow, and
Scholze~\cite{bms2}. The author discovered Theorem~\ref{thm:main} as stated shortly thereafter
and communicated it to several people. See for
example~\cite{ariotta,gikr,hedenlund-thesis,mor-profinite}, which contain expositions of some of
the ideas of this paper. In particular, the paper~\cite{gikr} of Gheorghe--Isaksen--Krause--Ricka
shows that if $\Tot^\star(\MU^\bullet)$ denotes the Adams tower associated to the \v{C}ech complex
of $\bS\rightarrow\MU$, then $\Dec(\Tot^\star(\MU^\bullet))$ is what is now called the even
filtration $\bS_\ev$ on the sphere spectrum in the language of~\cite{hrw}.
Recently, Tyler Lawson~\cite{lawson-filtered} has given a generalized version of a d\'ecalage construction
which applies more generally to filtrations in a broad class of $\infty$-categories, not just
stable ones. Lawson gives a sketch of the agreement of $\E^r(\Dec(\F))$ and $\E^{r+1}(\F)$
in~\cite[Prop.~9.17]{lawson-filtered}.

Besides the theoretical results mentioned above, this paper
is intended as a brief manual on spectral sequences, so more is included than strictly necessary
for the proofs of the main theorems. For example, we include discussions of indexing conventions and convergence,
which do not in fact require the methods developed here. The convergence discussion is especially
simplistic as we are awaiting forthcoming work of Hedenlund, Krause, and Nikolaus on a new approach
to conditional convergence.

Two applications of the d\'ecalage approach to
spectral sequences are given, namely to multiplicativity of spectral sequences and to the proof that the
two standard constructions of the Atiyah--Hirzebruch spectral sequence agree from the $\E_2$-page
on.

\paragraph{Additional related work.}
Our perspective places filtered objects at the heart of the study of spectral sequences, including
the higher pages. By contrast, refer to work of Livernet--Whitehouse~\cite{livernet-whitehouse} or
Cirici--Egas-Santander--Livernet~\cite{cirici-egas-santander-livernet} who
study homotopy theories of extended spectral sequences and define d\'ecalage functors for these
theories. In another direction, Cirici--Guill\'en~\cite{cirici-guillen} use Deligne's d\'ecalage to denote certain
localizations of the homotopy theory of filtered complexes with applications to mixed Hodge theory.

\paragraph{Acknowledgments.}
Peter Scholze suggested to me the problem of constructing
a ``BMS'' filtration on periodic cyclic homology and related invariants
without $p$-completeness assumptions. This led
to~\cite{antieau-derham} and to an appreciation of the Beilinson $t$-structure.
Akhil Mathew asked how the d\'ecalage used there relates to that defined by
Deligne in~\cite{deligne-hodge-2}, leading to this paper. I thank both for their motivating questions.
Additionally, I thank Achim Krause and Thomas Nikolaus for explaining parts of their work on
convergence.

This paper would never have been completed without the interest of Julie Creemers, Lennart Meier,
and Itamar Mor. Additionally, Julie Creemers and Alice Hedenlund provided very helpful comments on
a draft, for which I am very grateful.

I was supported by NSF grants DMS-2120005,
DMS-2102010, and DMS-2152235, by Simons Fellowships 666565 and 00005925, and by the Simons
Collaboration on Perfection. This paper was completed while I was a guest at the MPIM.

\section{$t$-structures}\label{sec:t}

Stable $\infty$-categories are the natural higher categorical models of triangulated categories.
We include a brief overview of the theory. For proofs and more details, see~\cite[Chap.~1]{ha}
or~\cite[App.~C]{sag}.

\begin{definition}
    \begin{enumerate}
        \item[(a)] An $\infty$-category is
            pointed if it admits an
            object $0$ which is both initial and terminal.
        \item[(b)] If $\Cscr$ is pointed and
            admits finite colimits, then there is a suspension functor
            $\Sigma\colon\Cscr\rightarrow\Cscr$ obtained by taking the pushout
            of the functors $0\leftarrow\id\rightarrow 0$. The
            Spanier--Whitehead $\infty$-category of $\Cscr$ is
            $\SW(\Cscr)\we\colim(\Cscr\xrightarrow{\Sigma}\Cscr\xrightarrow{\Sigma}\cdots)$.
            An $\infty$-category is
            prestable if it is
            pointed, admits finite colimits, the suspension functor
            $\Sigma$ is fully faithful, and the essential image of the fully
            faithful functor
            $\Cscr\rightarrow\SW(\Cscr)$ is closed under extensions.
        \item[(c)] An $\infty$-category $\Cscr$ is stable if it is pointed,
            admits finite colimits, and the suspension functor
            $\Sigma$ is an equivalence. If $\Cscr$ is prestable, then
            $\SW(\Cscr)$ is prestable.
    \end{enumerate}
\end{definition}

\begin{definition}
    A functor $\Cscr\rightarrow\Dscr$ between stable $\infty$-categories is
    exact if it preserves finite colimits.
\end{definition}

\begin{fact}
    If $\Cscr$ is a stable $\infty$-category, then its homotopy category
    $\Ho(\Cscr)$ admits a natural triangulated structure. If
    $f\colon\Cscr\rightarrow\Dscr$ is an exact functor between stable
    $\infty$-categories, then $\Ho(f)\colon\Ho(\Cscr)\rightarrow\Ho(\Dscr)$ is
    a triangulated functor.
\end{fact}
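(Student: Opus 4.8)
The plan is to build the triangulated structure on $\Ho(\Cscr)$ directly from the defining features of stability. Since the suspension $\Sigma$ is an equivalence, it descends to an auto-equivalence of $\Ho(\Cscr)$, and this will serve as the shift functor. I would declare a triangle $X\to Y\to Z\to\Sigma X$ in $\Ho(\Cscr)$ to be distinguished exactly when it is isomorphic, as a diagram in $\Ho(\Cscr)$, to one arising from a cofiber sequence in $\Cscr$: a pushout square in $\Cscr$ whose other two corners are $X\to Y$ and $0$, with $Z=\cofib(X\to Y)$, together with the boundary map $Z\to\Sigma X$ read off from the coherence of the square. The one structural input I would isolate first is that in a stable $\infty$-category a commutative square is a pushout if and only if it is a pullback; this interchangeability of fiber and cofiber sequences is what powers every axiom.

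I would then check the axioms (TR1)--(TR4). Axiom (TR1) is immediate: the constant triangle on any object is distinguished, distinguishedness is isomorphism-invariant by construction, and every morphism admits a cofiber and so extends to a distinguished triangle. For the rotation axiom (TR2) the essential point is a pasting of pushout squares: the square exhibiting $Z=\cofib(X\to Y)$ with lower-left corner $0$ pastes with a second square having $0$ in its upper-right corner and $\Sigma X$ in its lower-right, and the resulting pushout rectangle exhibits $\Sigma X=\cofib(Y\to Z)$. Reading the boundary maps off this diagram shows that $Y\to Z\to\Sigma X\to\Sigma Y$ is again distinguished, up to the customary sign, and the inverse rotation follows symmetrically using pullbacks and $\Sigma^{-1}$. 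Axiom (TR3) is then handled by the universal property of the cofiber: given the two left-hand vertical maps of a prospective morphism of distinguished triangles, the induced map on cofibers exists and is automatically compatible.

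The main obstacle is the octahedral axiom (TR4), together with the coherence bookkeeping running through all of the above. Here I would lay the three given cofiber sequences out as a grid of pushout squares and paste repeatedly, using the pushout-equals-pullback principle to identify the iterated cofibers, and then extract the octahedron by reading the outer maps of the grid. The difficulty is not conceptual but coherent: each distinguished triangle in $\Ho(\Cscr)$ remembers only a homotopy class of boundary map, whereas the construction genuinely takes place in $\Cscr$, where the squares are specified up to coherent homotopy, so one must check that the chosen witnesses assemble compatibly and that all signs match the triangulated conventions. Finally, the functoriality statement is formal: an exact functor $f$ preserves finite colimits, hence the zero object, pushout squares, and therefore $\Sigma$ and every cofiber sequence, so $\Ho(f)$ commutes with the shift and sends distinguished triangles to distinguished triangles, which is precisely the assertion that it is triangulated.
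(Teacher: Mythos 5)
Your sketch is correct and follows exactly the standard argument that the paper itself defers to: this statement is recorded as a Fact without proof, with a pointer to \cite[Chap.~1]{ha} (Theorem~1.1.2.14 there), whose proof proceeds precisely as you describe --- distinguished triangles as cofiber sequences, the pushout-equals-pullback principle driving TR1--TR4 (with the sign in TR2 and the coherence lifting in TR3 handled as you indicate), and functoriality of $\Ho(f)$ following formally from preservation of finite colimits. No meaningful divergence from the cited proof, so there is nothing further to compare.
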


No non-trivial pointed $1$-category is stable or prestable. Indeed, in a pointed
$1$-category the suspension functor is the constant functor on $0$. In
particular, triangulated categories are neither prestable nor stable.

\begin{example}
    \begin{enumerate}
        \item[(a)] The $\infty$-category $\Sp$ is the stabilization (in $\Pr^\L$) of the
            $\infty$-category $\Sscr$ of anima. This is a symmetric monoidal stable
            $\infty$-category which is compactly generated by its unit, the sphere spectrum $\bS$.
            Thus, we will also write $\D(\bS)$ for $\Sp$ as $\Mod_{\bS}(\Sp)\we\Sp$.
        \item[(b)] If $A$ is an $\bA_\infty$-ring spectrum, then $\LMod_A(\D(\bS))=\D(A)$ is the
            $\infty$-category of left $A$-module spectra. If $A$ is a dg algebra, then the homotopy
            category of $\D(A)$ is equivalent to the usual triangulated category of left dg modules over
            $A$ up to quasi-isomorphism. In particular, this holds when $A$ is a discrete
            commutative ring.
        \item[(c)] If $X$ is a quasicompact and quasiseparated scheme, then $\D_\qc(X)=\lim_{\Spec
            R\rightarrow X}\D(R)$ defines a stable $\infty$-category whose homotopy category is the
            familiar triangulated category of complexes of sheaves of $\Oscr_X$-modules with
            quasicoherent cohomology.
    \end{enumerate}
\end{example}

The theory of $t$-structures plays a fundamental role in this paper.

\begin{definition}
    A $t$-structure on a stable $\infty$-category $\Cscr$ consists of a pair
    $(\Cscr_{\geq 0},\Cscr_{\leq 0})$ of full subcategories of $\Cscr$
    satisfying the following conditions:
    \begin{enumerate}
        \item[(a)] $\Cscr_{\geq 0}[1]\subseteq\Cscr_{\geq 0}$ and $\Cscr_{\leq
            0}\subseteq\Cscr_{\leq 0}[1]$;
        \item[(b)] if $X\in\Cscr_{\geq 0}$ and $Y\in\Cscr_{\leq 0}$, then
            the mapping space $\Map_\Cscr(X,Y[-1])$ is contractible;
        \item[(c)] every object $X$ in $\Cscr$ fits into a fiber sequence
            $W\rightarrow X\rightarrow Z$ where $W\in\Cscr_{\geq 0}$ and
            $Z[1]\in\Cscr_{\leq 0}$.
    \end{enumerate}
    It is not hard to see that to give a $t$-structure on $\Cscr$ is equivalent to give one on its
    triangulated homotopy category $\Ho(\Cscr)$.
\end{definition}

\begin{remark}
    If $(\Cscr_{\geq 0},\Cscr_{\leq 0})$ is a $t$-structure, then the inclusions of
    $\Cscr_{\geq 0}$ and $\Cscr_{\leq 0}$ into $\Cscr$ admit right and left
    adjoints, respectively. This follows from a Bousfield localization argument using
    the orthogonality relation in (b) to show that given $X\in\Cscr$ the sequence
    $W\rightarrow X\rightarrow Z$ is unique up to homotopy. The functor
    $\Cscr\rightarrow\Cscr_{\geq 0}$ is $\tau_{\geq 0}$, while
    $\Cscr\rightarrow\Cscr_{\leq 0}$ is $\tau_{\leq 0}$. No care will be taken to
    distinguish between these functors and the corresponding colocalization and
    localization functors $\Cscr\rightarrow\Cscr_{\geq 0}\hookrightarrow\Cscr$ and
    $\Cscr\rightarrow\Cscr_{\leq 0}\hookrightarrow\Cscr$.

    More generally, one defines subcategories $\Cscr_{\geq n}\we\Cscr_{\geq 0}[n]$
    and $\Cscr_{\leq n}\we\Cscr_{\leq 0}[n]$
    for any integer $n$. There are corresponding truncation functors $\tau_{\geq
    n}\colon\Cscr\rightarrow\Cscr_{\geq n}$ and $\tau_{\leq
    n}\colon\Cscr\rightarrow\Cscr_{\leq n}$ for any $n$.
\end{remark}

\begin{definition}
    The objects of $\Cscr_{\geq 0}$ are called connective while those of
    $\Cscr_{\leq 0}$ are coconnective or $0$-truncated. More generally, objects of
    $\Cscr_{\geq n}$ are $n$-connective and those of $\Cscr_{\leq n}$ are
    called $n$-truncated, or at times $n$-coconnective.
    Note: these notions depend on the chosen $t$-structure on $\Cscr$.
    Often, as is the case in spectra or $\D(\bZ)$, this $t$-structure is not
    specified, in which case the standard $t$-structure is always implicit.
\end{definition}

The following lemma guarantees that the order of application of truncation and connective cover
functors is irrelevant.

\begin{lemma}
    For any $m\leq n$, define $\Cscr_{[m,n]}=\Cscr_{\geq m}\cap\Cscr_{\leq n}$.
    There is a natural equivalence $\tau_{\geq m}\circ\tau_{\leq
    n}\we\tau_{\leq n}\circ\tau_{\geq m}$ of functors
    $\Cscr\rightarrow\Cscr_{[m,n]}$.
\end{lemma}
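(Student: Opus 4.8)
The plan is to exhibit both composites as the fiber of one and the same canonical map, so that the desired equivalence falls out of the octahedral axiom together with the universal properties of the truncation functors, with no computation of homotopy objects. Throughout I write $\tau_{\geq m}X\rightarrow X$ for the counit and $X\rightarrow\tau_{\leq n}X$ for the unit, and I use freely that $\tau_{\geq n+1}\circ\tau_{\geq m}\we\tau_{\geq n+1}$ and $\tau_{\leq m-1}\circ\tau_{\leq n}\we\tau_{\leq m-1}$, which hold because $n+1\geq m$ and $m-1\leq n$.

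First I would record the two canonical fiber sequences attached to $X$. Truncating the connective cover $\tau_{\geq m}X$ at level $n$ gives $\tau_{\geq n+1}X\rightarrow\tau_{\geq m}X\rightarrow\tau_{\leq n}\tau_{\geq m}X$, where the fiber is identified using $\tau_{\geq n+1}\tau_{\geq m}\we\tau_{\geq n+1}$. Dually, taking the $m$-connective cover of the truncation $\tau_{\leq n}X$ gives $\tau_{\geq m}\tau_{\leq n}X\rightarrow\tau_{\leq n}X\rightarrow\tau_{\leq m-1}X$, where the cofiber is identified using $\tau_{\leq m-1}\tau_{\leq n}\we\tau_{\leq m-1}$. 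In particular the first sequence presents $\tau_{\leq n}\tau_{\geq m}X$ as $\cofib(\tau_{\geq n+1}X\rightarrow\tau_{\geq m}X)$, and the second presents $\tau_{\geq m}\tau_{\leq n}X$ as $\fib(\tau_{\leq n}X\rightarrow\tau_{\leq m-1}X)$.

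Next I would apply the octahedral axiom to the composable pair $\tau_{\geq n+1}X\xrightarrow{a}\tau_{\geq m}X\xrightarrow{b}X$, whose composite $b\circ a$ is the counit $\tau_{\geq n+1}X\rightarrow X$ (it is a map from a connective object to $X$, hence the counit by uniqueness). This yields a fiber sequence $\cofib(a)\rightarrow\cofib(b\circ a)\rightarrow\cofib(b)$, that is $\tau_{\leq n}\tau_{\geq m}X\rightarrow\tau_{\leq n}X\rightarrow\tau_{\leq m-1}X$, using the identifications of the three cofibers from the previous paragraph. The one point requiring care — and the step I expect to be the main obstacle — is to check that the connecting map $\tau_{\leq n}X\rightarrow\tau_{\leq m-1}X$ produced by the octahedral axiom coincides with the canonical truncation map appearing in the second fiber sequence. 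The octahedral map is induced on horizontal cofibers by the square with rows $\tau_{\geq n+1}X\rightarrow X$ and $\tau_{\geq m}X\rightarrow X$ and left vertical $a$, so it sits under $X$; since precomposition with the unit $X\rightarrow\tau_{\leq n}X$ induces an equivalence $\Map_\Cscr(\tau_{\leq n}X,\tau_{\leq m-1}X)\we\Map_\Cscr(X,\tau_{\leq m-1}X)$ — as $\tau_{\leq m-1}X$ lies in $\Cscr_{\leq n}$ and $X\rightarrow\tau_{\leq n}X$ is the $\Cscr_{\leq n}$-localization — any two maps $\tau_{\leq n}X\rightarrow\tau_{\leq m-1}X$ compatible with the maps from $X$ agree, so the octahedral map is the canonical one.

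Finally, having identified both fiber sequences as the fiber sequence of one and the same map $\tau_{\leq n}X\rightarrow\tau_{\leq m-1}X$, their fibers are canonically equivalent, giving $\tau_{\leq n}\tau_{\geq m}X\we\tau_{\geq m}\tau_{\leq n}X$; since every construction above is functorial in $X$ (cofibers and the octahedral data are natural in the stable setting), the equivalence is natural. The common object lies in $\Cscr_{[m,n]}$ for free: the presentation $\tau_{\geq m}\tau_{\leq n}X$ exhibits it as an $m$-connective cover, hence in $\Cscr_{\geq m}$, while the presentation $\tau_{\leq n}\tau_{\geq m}X$ exhibits it as an $n$-truncation, hence in $\Cscr_{\leq n}$. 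An alternative, should the octahedral bookkeeping prove awkward to write, is to compute homotopy objects directly — both composites have homotopy $\pi_kX$ for $m\leq k\leq n$ and $0$ otherwise — and to invoke that a map between such bounded objects inducing isomorphisms on all homotopy objects is an equivalence, proved by induction on $n-m$ using the fiber sequences $\tau_{\geq k+1}\rightarrow\id\rightarrow\tau_{\leq k}$; but I would prefer the octahedral argument since it produces the natural equivalence without first constructing the map by hand.
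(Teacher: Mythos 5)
Your proposal is correct, but there is nothing in the paper to measure it against: the lemma is stated without proof, as a standard fact about $t$-structures (see \cite[Prop.~1.2.1.10]{ha}, or Beilinson--Bernstein--Deligne~1.3.5). Your octahedral argument is essentially the classical proof of that standard fact: exhibit both composites as the fiber of the canonical map $\tau_{\leq n}X\rightarrow\tau_{\leq m-1}X$, and you correctly isolate and handle the genuine subtlety, namely identifying the octahedral connecting map with the canonical truncation map via the localization equivalence $\Map_\Cscr(\tau_{\leq n}X,\tau_{\leq m-1}X)\we\Map_\Cscr(X,\tau_{\leq m-1}X)$.

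Two refinements. First, your parenthetical reason that $b\circ a$ is the counit (``a map from a connective object to $X$, hence the counit by uniqueness'') is not literally right: the orthogonality axiom applied to the fiber sequence $\tau_{\geq n+1}X\rightarrow X\rightarrow\tau_{\leq n}X$ shows that postcomposition with the counit gives an equivalence $\End(\tau_{\geq n+1}X)\we\Map_\Cscr(\tau_{\geq n+1}X,X)$, so this mapping space is not a point and not every such map is the counit. The conclusion still holds: by naturality of the counit applied to $b$ one has $\varepsilon_X\circ\tau_{\geq n+1}(b)\we b\circ a$, and $\tau_{\geq n+1}(b)$ is exactly the equivalence $\tau_{\geq n+1}\tau_{\geq m}X\we\tau_{\geq n+1}X$ you already used to identify the source --- in other words, counits of composite adjunctions compose. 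This is the same style of argument you spell out correctly for the connecting map, and it should replace the one-line justification. Second, the naturality hand-wave can be discharged cleanly without functorial octahedra: since $\Fun(\Cscr,\Cscr)$ is stable and inherits the pointwise $t$-structure (the paper records this inheritance for functor categories in Section~\ref{sec:t}), run your entire argument on the object $\id_\Cscr$ of $\Fun(\Cscr,\Cscr)$, applying the octahedron to the transformations $\tau_{\geq n+1}\Rightarrow\tau_{\geq m}\Rightarrow\id$; an equivalence between the resulting objects of the functor category \emph{is} the desired natural equivalence, with no pointwise-to-natural upgrade needed.
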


\begin{remark}
    Additionally, there are natural equivalences $\Sigma\circ\tau_{\geq
    n}\we\tau_{\geq n+1}\circ\Sigma$ and $\Sigma\circ\tau_{\leq n}\we\tau_{\leq
    n+1}\circ\Sigma$ and similarly for higher powers of $\Sigma$.
\end{remark}

\begin{definition}
    Given a $t$-structure on $\Cscr$, the heart of the $t$-structure is
    the intersection $\Cscr^\heart=\Cscr_{\geq 0}\cap\Cscr_{\leq 0}$. This turns
    out to be an abelian category, although it might be trivial. Given any integer
    $n$, one takes $\pi_n\colon\Cscr\rightarrow\Cscr^\heart$ to be the functor
    $\Sigma^{-n}\circ\tau_{\geq n}\circ\tau_{\leq n}\we\Sigma^{-n}\circ\tau_{\leq
    n}\circ\tau_{\geq n}\we\tau_{\leq 0}\circ\tau_{\geq 0}\circ\Sigma^{-n}\we\tau_{\geq
    0}\circ\tau_{\leq 0}\circ\Sigma^{-n}$.
\end{definition}

\begin{remark}
    Given a fiber sequence $X\rightarrow Y\rightarrow Z$ in $\Cscr$, there is an
    induced long exact sequence
    $$\cdots\rightarrow\pi_nX\rightarrow\pi_nY\rightarrow\pi_nZ\rightarrow\pi_{n-1}X\rightarrow\cdots$$
    in $\Cscr^\heart$.
    These sequences give a hands-on way for understanding stable
    $\infty$-categories using more familiar abelian categories.
\end{remark}

\begin{remark}
    If $(\Cscr_{\geq 0},\Cscr_{\leq 0})$ is a $t$-structure on a stable
    $\infty$-category $\Cscr$, then $\Cscr_{\geq 0}$ is a prestable
    $\infty$-category which is closed under extensions in $\Cscr$. The converse is
    true under the presence of enough colimits using a Bousfield localization argument.
    See~\cite[Prop.~1.4.4.11]{ha}.
\end{remark}

\begin{example}
    \begin{enumerate}
        \item[(a)] The $\infty$-category $\Sp\we\D(\bS)$ of spectra admits a $t$-structure 
            where $\Sp_{\geq 0}$ is the full subcategory of spectra $X$ such that
            $\pi_nX=0$ for $n<0$ while $\Sp_{\leq 0}$ is the full subcategory of $X$ where
            $\pi_nX=0$ for $n>0$. The heart $\Sp^\heart$ is the abelian category
            $\Mod_{\bZ}$ of abelian groups.
        \item[(b)] More generally, if $A$ is any connective $\bE_1$-ring spectrum, then $\D(A)$
            inherits a $t$-structure where $\D(A)_{\geq 0}$ consists of the $A$-module
            spectra $X$ where $\pi_nX=0$ for $n<0$ and $\D(A)_{\leq 0}$ consists of those
            where $\pi_nX=0$ for $n>0$. The connectivity assumption on $A$ guarantees that
            if $X$ is an $A$-module spectrum, then $\tau_{\geq 0}X$, where the truncation
            is taken in spectra, inherits a canonical $A$-module spectrum structure.
            The heart of this $t$-structure is naturally equivalent to the abelian category
            $\Mod_{\pi_0A}$ of left $\pi_0A$-modules.
            If $A$ is discrete, then this $t$-structure on $\D(A)$ recovers the usual
            $t$-structure on the derived $\infty$-category of $A$, which encodes the ``good
            truncations'' of complexes.
        \item[(c)] If $\Cscr$ is a stable $\infty$-category equipped with a $t$-structure
            $(\Cscr_{\geq 0},\Cscr_{\leq 0})$ and if $I$ is an $\infty$-category, then the pair
            $(\Fun(I,\Cscr_{\geq 0}),\Fun(I,\Cscr_{\leq 0}))$ forms a $t$-structure on
            $\Fun(I,\Cscr)$ with heart
            $\Fun(I,\Cscr)^\heart\we\Fun(I,\Cscr^\heart)\we\Fun(\Ho(I),\Cscr^\heart)$, where
            $\Ho(I)$ denotes the homotopy category of $I$.
    \end{enumerate}
\end{example}

\section{Beilinson's $t$-structure}\label{sec:beilinson}

Filtered objects in a stable $\infty$-category have been studied in several places.
See~\cite{ariotta,gwilliam-pavlov,lurie-rotation,ha,moulinos-filtrations,raksit}.

Let $\bZ$ denote the ordered set of integers viewed as a category so that there
is a (unique) morphism $m\rightarrow n$ if and only if $m\leq n$, in which case the
morphism is unique. Let $\bZ^\op$ be the opposite of $\bZ$.

\begin{definition}\label{def:filtration}
    Let $\Cscr$ be an $\infty$-category. The $\infty$-category of decreasing
    filtrations in $\Cscr$ is $$\F\Cscr=\Fun(\bZ^\op,\Cscr).$$ The $\infty$-category of
    increasing filtrations in $\Cscr$ is $\Fun(\bZ,\Cscr)$.
\end{definition}

\begin{remark}
    There is an equivalence $\bZ\we\bZ^\op$ obtained by sending $m$ to $-m$. The
    theory of increasing filtrations and decreasing filtrations are thus
    equivalent, so we focus below on decreasing filtrations.
\end{remark}

A decreasing filtration in $\Cscr$ is represented as an infinite sequence
$$\F^\star \colon\cdots\rightarrow\F^{s+1}\rightarrow\F^s\rightarrow\F^{s-1}\rightarrow\cdots.$$
Included in the data of a functor $\bZ^\op\rightarrow\Cscr$ is homotopy coherence data expressing
for example a specific homotopy between $\F^{s+1}\rightarrow\F^{s-1}$ and the
composition of $\F^{s+1}\rightarrow\F^s$ with $\F^s\rightarrow\F^{s-1}$.

\begin{example}
    If $\Cscr\we\D(\bZ)$ is the $\infty$-category of $\bZ$-module spectra, then
    $\FD(\bZ)$ is the $\infty$-category of filtered $\bZ$-module spectra. The
    homotopy category of $\FD(\bZ)$ is what is often called the derived
    category of filtered complexes. In fact, if $\F\Mod_{\bZ}$ is the
    Grothendieck abelian
    category of filtered abelian groups, then $\FD(\bZ)\we\D(\F\Mod_{\bZ})$ is
    its derived $\infty$-category. This follows for example from
    Proposition~\ref{prop:inheritance} below.
\end{example}

\begin{warning}
    By Definition~\ref{def:filtration}, a filtered object of
    $\F\Mod_{\bZ}$ is an arbitrary $\bZ^\op$-indexed sequence of maps of abelian groups. There is
    no requirement that the transition maps in the sequence be injective. When
    they are, the filtration is called strict. The category of strict
    filtrations on $\Mod_{\bZ}$ has kernels and cokernels, but is not abelian.
\end{warning}

Often, one considers filtrations of specific objects.

\begin{definition}
    A decreasing filtration on an object $X$ of $\Cscr$ is a
    filtration $\F^\star$ together with a map $\F^\star\rightarrow X$ of filtrations, where
    $X$ denotes the constant decreasing filtration on $X$.
    A filtration $\F^\star$ on $X$ is exhaustive if $X$ is a colimit of
    the diagram $\F^\star$. In analogy with the ordinal $\omega+1$, one can consider the
    $\infty$-category of pairs $(\F^\star\rightarrow X)$ of filtrations on objects of $\Cscr$ as the functor category
    $\Fun(\bZ^\op+1,\Cscr)$, where $\bZ^\op+1$ is the union of $\bZ^\op$ and a new, terminal element
    $-\infty$. Of course, we have
    $\bZ^\op+1\we(\bZ^\op)^{\vartriangleright}\we(\bZ^\vartriangleleft)^\op$.
\end{definition}

The picture to have in mind here is a large commutative diagram
$$\xymatrix{
    \cdots\ar[r]&\F^{s+1}\ar[dr]\ar[r]&\F^s\ar[d]\ar[r]&\F^{s+1}\ar[dl]\ar[r]&\cdots\\
    &&X.&&
}$$

\begin{example}
    If $\Cscr$ admits sequential colimits, then any filtration $\F^\star$ can
    be viewed as giving a filtration on $\F^{-\infty}=\colim_s\F^s$. This
    construction will appear frequently, so we denote it by
    $|\F^\star|=\F^{-\infty}$. If $\F^\star$ is a filtration on $X$, then there
    is a canonical map $|\F^\star|\rightarrow X$, which is an equivalence if
    the filtration is exhaustive. If $\Cscr$ has sequential colimits, the functor
    $|-|\colon\F\Cscr\rightarrow\Cscr$ is left adjoint to the constant filtration
    functor $\Cscr\rightarrow\F\Cscr$.
\end{example}

\begin{notation}
    We use the notation $\F^\star$ for a generic filtration to emphasize the primacy of the
    filtration as opposed to a possible object being filtered. Sometimes, we will write $\F^\star
    X$ for a generic filtration as well, in which case, unless specified otherwise, $X=|\F^\star|$.
\end{notation}

\begin{definition}
    If $\Cscr$ has a final object $\ast$ and admits cofibers and if $\F^\star$ is a decreasing
    filtration in $\Cscr$, the associated graded pieces are
    $$\gr^s_\F=\mathrm{cofiber}(\F^{s+1}\rightarrow\F^s)=\frac{\F^{s+1}}{\F^s}$$ for $s\in\bZ$.
    The collection of associated graded pieces forms a graded object
    $\gr^\star_\F$ of $\Cscr$, i.e., an object of $\Fun(\bZ^\delta,\Cscr)$, where
    $\bZ^\delta$ denotes the set $\bZ$ viewed as a $1$-category with only
    identity morphisms.
    If $\F^\star$ is a filtration then its weights are the integers $s$
    for which $\gr^s_\F\we\F^s/\F^{s+1}$ is non-zero.
\end{definition}

The underlying goal of spectral sequences, and this paper, is to explain how to understand $X$ from
information about the graded pieces $\gr^\star_\F X$ of some filtration $\F^\star
X$ on $X$. This understanding is what spectral sequences `do'. For the
remainder of this paper, filtrations are studied in the context of {\em stable}
$\infty$-categories.

\begin{definition}
    A filtration $\F^\star$ in $\Cscr$ is complete if $\F^\infty=\lim_s\F^s\we 0$, the initial object of
    $\Cscr$. In particular, the limit exists.
    Let $\widehat{\F}\Cscr\subseteq\F\Cscr$ be the full subcategory of complete
    filtrations. If $\Cscr$ admits sequential limits, then the inclusion
    $\widehat{\F}\Cscr\subseteq\F\Cscr$ admits a left adjoint, called completion, given by
    $\widehat{\F}^\star=\F^\star/\F^\infty$.
    Let $\F^\star$ be a decreasing filtration on an object $X$ of $\Cscr$.
    The completion of $X$ with respect to the filtration is $\widehat{X}=\lim_s(X/\F^sX)\we
    X/(\lim_s\F^sX)\we X/(\F^{\infty}X)$. The completion $\widehat{\F}^\star$ is a complete
    filtration on $\widehat{X}$.
\end{definition}

\begin{example}
    Let $p^\star\bZ$ denote the $p$-adic filtration of $\bZ$. Specifically,
    $p^s\bZ$ is the ideal $(p^s)\subseteq\bZ$ for $s\geq 0$ and $p^s\bZ=\bZ$
    for $s\leq 0$. The graded pieces are $\gr^s\bZ\we (p^s)/(p^{s+1})\iso\bZ/p$
    for $s\geq 0$ and $0$ for $s<0$. This
    filtration is exhaustive, but not complete:
    $\F^\infty=\lim_s(p^s)\we(\bZ_p/\bZ)[-1]$. The completion is the $p$-adic
    filtration $p^\star\bZ_p$ of $\bZ_p$.
\end{example}

\begin{remark}
    There is a duality between complete exhaustive filtrations $\F^\star X$ on $X$ and
    non-complete filtrations on $0$ with limit $X$. Indeed, if $\F^\star X$ is
    complete and exhaustive, consider the cone of $\F^\star X\rightarrow
    X$, which is the filtration $\frac{X}{\F^\star X}$. The limit of this
    filtration is
    $\lim_s X/F^sX\we X$ and the colimit is $\colim_s X/F^sX\we 0$.
    Conversely, if $\F^\star$ is an exhaustive filtration on $0$ with with
    limit $Z$, then taking the fiber of $Z\rightarrow\F^\star$ gives a complete
    exhaustive filtration on $Z$. As example, consider the two perspectives on
    the completeness of the $p$-adic filtration: $\lim_s p^s\bZ_p=0$ versus
    $\bZ_p\iso\lim_s \bZ_p/p^s$.
\end{remark}

\begin{example}\label{ex:stupid}
    Let $M^\bullet\in\Ch^\bullet(\bZ)$ be a cochain complex. The stupid
    filtration
    $\sigma^\star M^\bullet$ on $M$ is the filtered cochain complex with $\sigma^s
    M^\bullet=M^{\bullet\geq s}$. The graded pieces are $\gr^sM^\bullet\iso
    M^s$ placed in cohomological degree $s$. The stupid filtration is a complete
    exhaustive filtration on $M^\bullet$ in $\Ch^\bullet(\bZ)$. Looking at
    underlying homotopy types produces a complete exhaustive filtration $\sigma^\star M$ on the
    image $M$ of $M^\bullet$ in the derived $\infty$-category $\D(\bZ)$.
    The graded pieces are $\gr^sM\we M^s[-s]$. We return to this example below in
    Construction~\ref{const:realization}.
\end{example}

\begin{example}\label{ex:whitehead}
    Let $X\in\D(\bS)$ be a spectrum and let $\F^\star X=\tau_{\geq\star}X$ be its
    Whitehead tower: $$\cdots\rightarrow\tau_{\geq s+1}X\rightarrow\tau_{\geq
    s}X\rightarrow\tau_{\geq s-1}X\rightarrow\cdots.$$ This is a complete and
    exhaustive filtration on $X$; the graded pieces are $\gr^sX\we\pi_sX[s]\in\D(\bS)$.
\end{example}

\begin{warning}
    Whitehead towers are neither complete nor exhaustive in a general stable
    $\infty$-category with a $t$-structure. In the case of spectra, the usual,
    Postnikov $t$-structure is compatible with filtered colimits,
    so that the homotopy groups of a filtered colimit are
    the filtered colimit of the homotopy groups, and is left-complete, so that
    Postnikov towers converge. These properties are shared by many other
    $t$-structures which occur in algebra, geometry, and topology, but not all.
    For an extensive discussion, see~\cite[Sec.~2]{clausen-mathew}.
\end{warning}

\begin{example}
    Let $X$ be a CW complex, which we view as a space with a certain increasing
    filtration $\F_\star X$ by cells. The filtration is complete and exhaustive
    by definition. Taking $\bZ$-cochains yields a filtered complex
    with
    $$\F^s_{\mathrm{CW}}\R\Gamma(X,\bZ)=\mathrm{fib}(\R\Gamma(X,\bZ)\rightarrow\R\Gamma(X_{s-1},\bZ)),$$
    where $\R\Gamma(-,\bZ)$ denotes the singular cohomology functor (which is
    equivalent to sheaf cohomology for CW complexes). The choice of indexing
    here implies that $\F^0_{\mathrm{CW}}\R\Gamma(X,\bZ)\we\R\Gamma(X,\bZ)$
    since the cohomology of the empty set vanishes. The CW filtration is complete and
    exhaustive on $\R\Gamma(X,\bZ)$ with associated graded pieces
    $\gr^s\R\Gamma(X,\bZ)\we(\R\Gamma(X_{s-1},\bZ)/\R\Gamma(X_{s},\bZ))[-1]\we
    \prod_{\text{$s$ cells}}\bZ[-s]$. This filtration is used below in
    Section~\ref{sec:mysterious}.
\end{example}

\begin{notation}
    Let $\F^\star$ be a filtered object in a stable $\infty$-category. For
    $j\geq i$ finite, let $\gr^{[i,j)}_\F=\cofib(\F^{j}\rightarrow\F^i)=\tfrac{\F^i}{\F^j}$. 
    For example, $\gr^{[i,i)}_\F\we 0$ and $\gr_\F^{[i,i+1)}\we\gr^i_\F$.
    Additionally, let
    $\gr^{[i,\infty)}_\F=\cofib(\F^\infty\rightarrow\F^i)=\lim_{j\rightarrow\infty}\gr^{[i,j)}_\F$
    and let $\gr^{(-\infty,j)}=\colim_{i\rightarrow-\infty}\gr^{[i,j)}_\F$.
    The graded pieces for other intervals are defined analogously.
    When unambiguous, $\gr^i$ is used for $\gr^i_\F$.
\end{notation}

\begin{construction}[Filtration on gradeds]\label{const:filtered_gradeds}
    It often convenient to view $\gr^{[i,j)}_\F$ as admitting a residual filtration
    $$\cdots\rightarrow 0\rightarrow\frac{\F^{j-1}}{\F^j}\rightarrow\frac{\F^{j-2}}{\F^j}\rightarrow\cdots\rightarrow\frac{\F^i}{\F^j}\xrightarrow{=}\frac{\F^i}{\F^j}\rightarrow\cdots,$$
    where the term $\tfrac{\F^{j-1}}{\F^j}$ is in filtration weight $j-1$.
    This is a complete exhaustive filtration on $\gr^{[i,j)}_\F$ with natural identifications
    $$\gr^a(\gr^{[i,j)}_\F)\we\begin{cases}
        \gr^a_\F&\text{if $i\leq a<j$, and}\\
        0&\text{otherwise.}
    \end{cases}$$
\end{construction}

Lurie observed in~\cite[Rem.~1.2.2.3]{ha} that if $\F^\star$ is a filtration, then
the graded objects form a kind of cochain complex. Specifically, each fiber
sequence $$\gr^{s+1}\rightarrow\frac{\F^s}{\F^{s+2}}\rightarrow\gr^s$$
gives rise to a `differential' $\delta\colon\gr^s\rightarrow\gr^{s+1}[1]$.

\begin{lemma}\label{lem:nullhomotopy}
    If $\F^\star$ is a filtration, then there is a canonical
    nullhomotopy $\delta\circ\delta\we 0$ of maps from $\gr^s$ to
    $\gr^{s+2}[2]$.
\end{lemma}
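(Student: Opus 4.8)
The plan is to factor the differential $\delta$ through the connecting map of a one-step quotient and then recognize the double composite as containing two consecutive maps of a (co)fiber sequence, which are canonically null. Throughout, write $\partial^s\colon\gr^s\to\F^{s+1}[1]$ for the connecting map of the cofiber sequence $\F^{s+1}\to\F^s\to\gr^s$ and $\pi^{s+1}\colon\F^{s+1}\to\gr^{s+1}$ for the canonical projection.

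First I would establish the key factorization $\delta\we\pi^{s+1}[1]\circ\partial^s$. The projections $\F^{s+1}\to\gr^{s+1}$ and $\F^s\to\F^s/\F^{s+2}$, together with the identity on $\gr^s$, assemble into a map of cofiber sequences from $(\F^{s+1}\to\F^s\to\gr^s)$ to $(\gr^{s+1}\to\F^s/\F^{s+2}\to\gr^s)$; both squares commute because the induced map $\gr^{s+1}=\F^{s+1}/\F^{s+2}\to\F^s/\F^{s+2}$ is the one coming from $\F^{s+1}\to\F^s$, and the further quotient $\F^s/\F^{s+2}\to\gr^s$ recovers $\F^s\to\gr^s$. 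Naturality of the connecting map for a map of cofiber sequences then identifies $\delta$, the connecting map of the bottom row, with $\pi^{s+1}[1]\circ\partial^s$, canonically. Applying the same factorization at weight $s+1$ gives $\delta\we\pi^{s+2}[1]\circ\partial^{s+1}$ for the second differential.

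With these factorizations in hand, the double composite $\delta\circ\delta\colon\gr^s\to\gr^{s+2}[2]$ (i.e.\ $\delta[1]\circ\delta$) expands as
$$\delta\circ\delta\we\pi^{s+2}[2]\circ\partial^{s+1}[1]\circ\pi^{s+1}[1]\circ\partial^s,$$
whose middle portion is $(\partial^{s+1}\circ\pi^{s+1})[1]$. Here $\partial^{s+1}\circ\pi^{s+1}$ is the composite of the two consecutive maps $\F^{s+1}\xrightarrow{\pi^{s+1}}\gr^{s+1}\xrightarrow{\partial^{s+1}}\F^{s+2}[1]$, and this sequence is precisely the rotation of the defining cofiber sequence $\F^{s+2}\to\F^{s+1}\to\gr^{s+1}$, hence is itself a (co)fiber sequence. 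Consecutive maps in a (co)fiber sequence carry a canonical nullhomotopy, so $\partial^{s+1}\circ\pi^{s+1}\we 0$ canonically; whiskering this nullhomotopy on the left by $\pi^{s+2}[2]$ and on the right by $\partial^s$ (after the shift) yields the desired canonical nullhomotopy of $\delta\circ\delta$.

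The main obstacle is the first step: ensuring that the identification $\delta\we\pi^{s+1}[1]\circ\partial^s$ and the final nullhomotopy are genuinely \emph{canonical} and coherent, rather than merely valid up to an unspecified homotopy in the triangulated homotopy category $\Ho(\Cscr)$. The cleanest way to secure coherence is to extract everything from a single diagram built functorially out of $\F^\star$ --- the stacked pushout squares relating $\F^{s+2}\to\F^{s+1}\to\F^s$ to their successive cofibers $\gr^{s+1}$, $\F^s/\F^{s+2}$, $\gr^s$ --- so that both connecting maps, the comparison of cofiber sequences, and the terminal nullhomotopy are produced simultaneously. Once this diagram is in place, all remaining manipulations are formal consequences of the axioms of a stable $\infty$-category.
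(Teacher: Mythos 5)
Your proposal is correct, but it takes a different route from the paper. The paper's proof never leaves the partial quotients: it assembles the single $3\times 3$ commutative diagram with rows $\gr^{s+2}\to\gr^{[s+1,s+3)}\to\gr^{s+1}$ and $\gr^{s+2}\to\gr^{[s,s+3)}\to\gr^{[s,s+2)}$ and $0\to\gr^s\to\gr^s$ (stated with $s=0$), extends it by horizontal and vertical cofibers, and reads the nullhomotopy directly off the resulting bottom-right square, whose upper-right corner is $0$: commutativity of that square \emph{is} the homotopy $\delta\circ\delta\we 0$. You instead factor through the filtration objects themselves, writing $\delta\we\pi^{s+1}[1]\circ\partial^s$ via naturality of connecting maps for the comparison $(\F^{s+1}\to\F^s\to\gr^s)\to(\gr^{s+1}\to\F^s/\F^{s+2}\to\gr^s)$, and then observe that the inner composite $\partial^{s+1}\circ\pi^{s+1}$ consists of consecutive maps in the rotation of $\F^{s+2}\to\F^{s+1}\to\gr^{s+1}$, hence carries a canonical nullhomotopy that you whisker. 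This is the classical ``composite of two Bocksteins vanishes'' argument, and it is valid; your closing caveat about coherence is also well placed, and your proposed fix --- producing the factorizations, the comparison, and the terminal nullhomotopy simultaneously from one functorial diagram of stacked pushout squares built from $\F^{s+2}\to\F^{s+1}\to\F^s$ --- is exactly what makes the argument work at the $\infty$-categorical level; after taking cofibers, that stacked diagram essentially contains the paper's $3\times 3$ diagram. What each approach buys: yours is more elementary and makes the triangulated-category intuition transparent, at the cost of routing through the $\F^i$ and an explicit whiskering step; the paper's stays entirely among the objects $\gr^{[i,j)}$ that feed into the coherent cochain complex of Theorem~\ref{thm:ariotta}, so the nullhomotopy it produces is visibly the one appearing in the restriction of the $\Xi^\op$-diagram to the $2$-skeleton, which is the form needed later.
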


\begin{proof}
    The upper left $3\times 3$ part of the diagram below is commutative with
    exact rows and columns:
    $$\xymatrix{
        \gr^2\ar[r]\ar@{=}[d]&\gr^{[1,3)}\ar[r]\ar[d]&\gr^1\ar[d]\ar[r]^{\delta}&\gr^2[1]\ar@{=}[d]\\
        \gr^2\ar[r]\ar[d]&\gr^{[0,3)}\ar[r]\ar[d]&\gr^{[0,2)}\ar[r]\ar[d]&\gr^2[1]\ar[d]\\
        0\ar[r]&\gr^0\ar@{=}[r]&\gr^0\ar[r]\ar[d]^\delta&0\ar[d]\\
        &&\gr^1[1]\ar[r]^{\delta}&\gr^2[2].
    }$$
    The rest of the diagram is obtained by taking horizontal and vertical
    cofibers. The bottom right square expresses the nullhomotopy $\delta\circ\delta\we 0$.
\end{proof}

It follows that the sequence of morphisms
\begin{equation}\label{eq:cochain}
    \cdots\rightarrow\gr^{-2}[-2]\xrightarrow{\delta}\gr^{-1}[-1]\xrightarrow{\delta}\gr^0\xrightarrow{\delta}\gr^1[1]\xrightarrow{\delta}\gr^2[2]\rightarrow\cdots
\end{equation}
is a kind of cochain complex object in $\Cscr$.

The filtration $\F^\star$ contains much more information than just the graded
pieces, the differentials $\delta$, and the nullhomotopies. In order to reconstruct more of it,
Nikolaus suggested using the pointed $1$-category $\Xi$ of Joyal~\cite[35.1]{joyal-notes}.
The set of objects is $\bZ_*$, the set of
integers with an extra base point which is both an initial and final object. The
sets of morphisms between integers are
$$\Hom_\Xi(m,n)=\begin{cases}
    \ast&\text{if $n\neq m,m-1$,}\\
    {\id,\ast}&\text{if $n=m$,}\\
    {\delta,\ast}&\text{if $n=m-1$.}
\end{cases}$$
In particular, in $\Xi$, one has $\delta\circ\delta\we\ast$ whenever the
left-hand side makes sense.

\begin{definition}
    Let $\Cscr$ be a pointed $\infty$-category. A coherent chain
    complex in
    $\Cscr$ is a pointed functor $\Xi\rightarrow\Cscr$. The $\infty$-category
    of coherent chain complexes in $\Cscr$ is the functor category
    $\Ch_\bullet(\Cscr)\we\Fun_\ast(\Xi,\Cscr)$. Similarly, a coherent cochain
    complex in $\Cscr$ is a pointed functor $\Xi^\op\rightarrow\Cscr$ and
    the $\infty$-category of coherent chain complexes in $\Cscr$ is
    $\Ch^\bullet(\Cscr)=\Fun_\ast(\Xi^\op,\Cscr)$.
\end{definition}

\begin{example}
    If $\Ascr$ is an abelian category, then
    $\Ch^\bullet(\Ascr)\we\Fun_\ast(\Xi^\op,\Ascr)$ recovers the usual abelian
    category of cochain complexes in $\Ascr$.
\end{example}

The following theorem is due to Ariotta~\cite{ariotta} and can also be
extracted from Raksit~\cite{raksit} or~\cite[Ex.~1.10.6]{amgr4}. There is a connection to
quasicoherent sheaves on $\bA^1/\Gm$; see~\cite{lurie-rotation,moulinos-filtrations}.

\begin{theorem}[Ariotta's $\E^1$-page theorem]\label{thm:ariotta}
    Let $\Cscr$ be a stable $\infty$-category with sequential limits. The
    associated graded functor $\gr^\star\colon\F\Cscr\rightarrow\Gr\Cscr$
    factors through the forgetful functor
    $\Ch^\cs(\Cscr)\rightarrow\Gr\Cscr$ and induces
    an equivalence $\widehat{\F}\Cscr\we\Ch^\cs(\Cscr)$ between the
    $\infty$-category of complete decreasing filtrations in $\Cscr$ and the
    $\infty$-category of coherent cochain complexes in $\Cscr$.
\end{theorem}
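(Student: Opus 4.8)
The plan is to construct the promised refinement of the associated graded, prove the equivalence first for filtrations supported in a bounded window, and then pass to a limit to handle all complete filtrations. First I would promote $\gr^\star$ to a functor $\gr\colon\F\Cscr\to\Ch^\cs(\Cscr)$. The differential $\delta\colon\gr^s\to\gr^{s+1}[1]$ is the connecting map of the cofiber sequence $\gr^{s+1}\to\F^s/\F^{s+2}\to\gr^s$, and Lemma~\ref{lem:nullhomotopy} supplies the nullhomotopy $\delta\circ\delta\we 0$; these are only the first two layers of an infinite hierarchy of coherences. The data producing them is the tower of multi-step quotients $\F^s/\F^{s+n}$ together with the maps between them, and the $3\times 3$ diagram in the proof of Lemma~\ref{lem:nullhomotopy} is the first instance of the pattern organizing these connecting maps. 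The role of the pointed $1$-category $\Xi^\op$ is exactly to record this hierarchy uniformly: because $\delta\circ\delta=\ast$ holds strictly in the ordinary category $\Xi$, a pointed functor $\Xi^\op\to\Cscr$ carries along all higher nullhomotopies automatically, so assembling the quotients into such a functor yields $\gr$. That $\gr$ lifts the forgetful functor to $\Gr\Cscr$ is immediate, since the value of $\gr(\F)$ at an integer $s$ is $\gr^s_\F$ by construction.

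Next I would prove that $\gr$ restricts to an equivalence on filtrations supported in a finite window $[a,b)$, meaning $\F^s\we\F^a$ for $s\le a$ and $\F^s\we 0$ for $s\ge b$; on the target these match coherent cochain complexes concentrated in degrees $[a,b)$. I argue by induction on $b-a$. When $b-a=1$ a filtration is the data of a single graded piece and both sides are equivalent to $\Cscr$, so $\gr$ is an equivalence. For the inductive step, Construction~\ref{const:filtered_gradeds} exhibits a window $[a,b)$ filtration as an extension of its window $[a,b-1)$ truncation by the top graded piece $\gr^{b-1}$, and the analogous brutal-truncation cofiber sequence decomposes a coherent cochain complex on $[a,b)$ into its truncation to $[a,b-1)$ and its top term. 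Since $\gr$ is compatible with these two cofiber sequences and induces the identity on top graded pieces, two-out-of-three for the resulting map of cofiber sequences reduces the claim to the window $[a,b-1)$, completing the induction. Write $R$ for the resulting inverse on bounded windows.

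Finally I would pass to complete filtrations. Any complete $\F^\star$ satisfies $\F^s\we\lim_{j\to\infty}\gr^{[s,j)}_\F$, and by the bounded case each $\gr^{[s,j)}_\F$ is recovered as the weight-$s$ term of $R$ applied to the degree-$[s,j)$ truncation of $\gr(\F)$. Conversely, given a coherent cochain complex $C$ I define a filtration by setting $\F^s$ to be the limit over $j$ of the weight-$s$ terms of $R$ applied to the truncations of $C$ to degrees $[s,j)$; the filtration so produced is automatically complete, since the windows $[s,j)$ become empty as $s\to\infty$ and force $\lim_s\F^s\we 0$. Checking that these two assignments are mutually inverse on $\widehat{\F}\Cscr$ then reduces, by commuting the defining limits past $\gr$, to the termwise bounded equivalence already established; the limits in question exist by the hypothesis that $\Cscr$ admits sequential limits.

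The main obstacle is the first step: producing the functor $\gr\colon\F\Cscr\to\Ch^\cs(\Cscr)$ together with the full tower of higher coherences, that is, upgrading the single nullhomotopy of Lemma~\ref{lem:nullhomotopy} to a genuine pointed functor out of $\Xi^\op$ rather than merely a diagram in the homotopy category. Once this functor is available, the bounded induction and the limit argument are formal consequences of the stability of $\Cscr$ and the existence of sequential limits.
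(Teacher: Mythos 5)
You should know at the outset that the paper contains no proof of this statement: it is quoted from Ariotta, with alternative sources in Raksit and in~\cite[Ex.~1.10.6]{amgr4}, so your attempt has to be measured against the known argument. Measured that way, it has a genuine gap, and you have in fact identified it yourself: your step 1 never constructs the functor $\gr\colon\F\Cscr\rightarrow\Ch^\cs(\Cscr)$, and everything downstream depends on it. The observation that $\delta\circ\delta=\ast$ holds strictly in $\Xi$ cuts the opposite way from how you use it: it means that a pointed functor $\Xi^\op\rightarrow\Cscr$ \emph{encodes} an infinite tower of compatible higher nullhomotopies, all of which must be produced from the filtered object, not that they ``come along automatically.'' Lemma~\ref{lem:nullhomotopy} supplies only the $2$-skeletal data, i.e., a diagram in the homotopy category with one chosen nullhomotopy per composable pair, and there is no formal mechanism that rigidifies such data to a functor out of $\Xi^\op$. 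The actual construction (in Ariotta, and in the same spirit as Lurie's gap objects) introduces an auxiliary lattice category of all the quotients $\F^i/\F^j$, $i\leq j$, with zero objects on the diagonal and all squares bicartesian, and proves by pointwise Kan extension that restriction of such lattice diagrams to the filtered row, and separately to the near-diagonal $\Xi^\op$-shaped data, are both equivalences. That Kan-extension comparison \emph{is} the theorem; ``assembling the quotients into such a functor'' is precisely what remains to be done.

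Your bounded-window induction in step 2 is also insufficient as stated. Compatibility of $\gr$ with the two cofiber sequences plus two-out-of-three yields at best an objectwise conclusion --- that each window-$[a,b)$ filtration maps to a complex with the correct layers --- not that $\gr$ is fully faithful, which is what an equivalence of $\infty$-categories requires. The composite $\F\Cscr\rightarrow\Gr\Cscr$ forgetting the differentials satisfies exactly the same objectwise d\'evissage and is an equivalence on length-one windows, yet it already fails to be fully faithful on length-two windows (extensions versus bare pairs), so your inductive scheme cannot tell $\Ch^\cs(\Cscr)$ apart from $\Gr\Cscr$. A correct induction must compute mapping spaces, e.g., by exhibiting matching recollements of both sides (window $[a,b)$ glued from window $[a,b-1)$ and $\Cscr$) and checking that the comparison functor intertwines the gluing data. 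Your step 3, by contrast, is essentially sound: $\F^s\we\lim_j\gr^{[s,j)}_\F$ for complete filtrations, and completeness of the filtration built from a complex follows from cofinality of the empty diagonal windows $[n,n)$ in the poset of windows; that part would go through once the functor and the bounded case are genuinely established.
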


The theorem says that giving a filtered object is equivalent to giving the $\E^1$-page of
the spectral sequence associated to the filtration, at least if this
$\E^1$-page is viewed as an appropriately homotopy coherent object. 

\begin{notation}
    If $\F^\fs\in\F\Cscr$ is a filtered object, let $\gr^\cs_\F
    [\cs]$ denote the associated coherent cochain complex as displayed in~\eqref{eq:cochain}. If
    $C^\cs$ is a coherent cochain complex in $\Cscr$, let $\sigma^\fs C$ be the
    associated decreasing filtration in $\Cscr$, obtained via the inverse to the equivalence in
    Theorem~\ref{thm:ariotta}.
\end{notation}

\begin{example}
    If $C^\bullet$ is a cochain complex in $\Mod_{\bZ}$, then $\sigma^\fs C$ is
    the usual stupid filtration on the underlying homotopy type $|C^\bullet|$ of $C$ in
    $\D(\bZ)$ as in Example~\ref{ex:stupid}.
\end{example}

The functor $\widehat{\F}\Cscr\rightarrow\Ch^\bullet(\Cscr)$ is a souped-up
version of Lurie's construction~\eqref{eq:cochain} and indeed the restriction to the
$2$-skeleton of (the nerve of) $\Xi$ recovers the sequence~\eqref{eq:cochain}
together with the nullhomotopies $\delta\circ\delta\we 0$.

The theorem is blind to any preferred
$t$-structure on $\Cscr$. When one is fixed, the induced $t$-structure on $\widehat{\F}\Cscr$
is called the Beilinson $t$-structure.

\begin{definition}[Complete Beilinson $t$-structures]
    If $\Cscr$ admits sequential limits and is equipped with a $t$-structure $(\Cscr_{\geq 0},\Cscr_{\leq
    0})$, then $\Ch^\bullet(\Cscr)$ inherits the pointwise $t$-structure, where
    a coherent cochain complex $X^\bullet$ is connective if and only if each
    $X^n\in\Cscr_{\geq 0}$. The associated $t$-structure obtained through transport via
    Theorem~\ref{thm:ariotta} on the equivalent
    $\infty$-category $\widehat{\F}\Cscr$ is called the Beilinson
    $t$-structure associated to $(\Cscr_{\geq 0},\Cscr_{\leq 0})$. Unwinding
    the equivalence, the connective objects in the Beilinson $t$-structure are
    those complete filtrations $\F^\star$ such that
    $\gr^n_\F\in\Cscr_{\geq -n}$ and the coconnective objects are those complete filtrations $\F^\star$ with
    $\gr^n_\F\in\Cscr_{\leq -n}$ for all $n$. The heart of the Beilinson $t$-structure is
    equivalent to $\Ch^\bullet(\Cscr^\heart)$, the abelian category of cochain
    complexes in the heart of $\Cscr$.
\end{definition}

\begin{definition}[Incomplete Beilinson $t$-structures]\label{def:noncompletebeilinson}
    When $\Cscr$ admits sequential limits, there is a localization sequence
    $\Cscr\rightarrow\F\Cscr\rightarrow\widehat{\F}\Cscr$ where the right
    adjoints are given by taking the limit of a filtration and by including
    complete filtrations into all filtrations, respectively. If a $t$-structure
    on $\Cscr$ is fixed, the induced Beilinson $t$-structure on
    $\widehat{\F}\Cscr$ defines a $t$-structure on $\F\Cscr$, also called the Beilinson
    $t$-structure, by
    declaring that $(\F\Cscr)_{\geq 0}^\B$ is the full subcategory of
    filtrations $\F^\star$
    such that $\gr^n_\F\in\Cscr_{\geq -n}$ for all $n\in\bZ$. Equivalently, $(\F\Cscr)_{\geq 0}^\B$
    consists of those filtrations whose completion is connective in the Beilinson $t$-structure on
    $\widehat{\F}\Cscr$. In particular, the
    full subcategory $\Cscr\subseteq\F\Cscr$ of constant filtrations is
    $\infty$-connective in the (or any) Beilinson $t$-structure on $\F\Cscr$.
    With this definition, $(\F\Cscr)_{\leq 0}^\B\we(\widehat{\F}\Cscr)_{\leq 0}^\B$. In particular,
    any bounded above object is complete and the inclusion functor
    $\widehat{\F}\Cscr\rightarrow\F\Cscr$ is $t$-exact.
\end{definition}

\begin{remark}[Bounded above objects]\label{rem:bounded}
    If $\Cscr$ is a stable $\infty$-category which admits sequential limits and a $t$-structure on
    $\Cscr$ is fixed, then the objects $\F^\star$ of $(\F\Cscr)_{\leq 0}^\B\we(\widehat{\F}\Cscr)_{\leq 0}^\B$
    have the property that the filtration is complete and $\gr^n_\F\in\Cscr_{\leq -n}$. It follows
    inductively that each $\gr^{[n,m)}_\F\in\Cscr_{\leq -n}$ for $m\geq n$ and hence that the limit
    $\lim_m\gr^{[n,m)}_\F$ is in $\Cscr_{\leq -n}$ since the $(-n)$-coconnectives are closed under
    limits. By completeness, $\lim_m\gr^{[n,m)}_\F\we\F^n$, so $\F^n\in\Cscr_{\leq -n}$ for all
    $n$. If the $t$-structure on $\Cscr$ is right separated, then the converse is true: if
    $\F^\star$ is a filtration in $\F\Cscr$ with the property that $\F^n\in\Cscr_{\leq -n}$,
    then it is in $(\F\Cscr)_{\leq 0}$ and, in particular, complete. Indeed, the condition implies
    that each $\gr^n_\F\in\Cscr_{\leq -n}$ by taking cofibers, so it is enough to check completeness. But, $\lim_m\F^m$
    is in $\Cscr_{\leq -n}$ for all $n$, so it vanishes by right separatedness.
\end{remark}

\begin{remark}[Bounded Beilinson $t$-structures]
    If $\Cscr$ is a small stable $\infty$-category with a bounded $t$-structure, then it can be embedded
    into $\Ind(\Cscr)$ and $\Ind(\Cscr)$ admits a $t$-structure which is
    compatible with filtered colimits and right complete. The induced Yoneda embedding
    $\Cscr\rightarrow\Ind(\Cscr)$ is $t$-exact. See for
    example~\cite[Prop.~2.13]{agh}. Of course, $\Ind(\Cscr)$ admits
    sequential limits, so the formalism above can be applied to discuss the Beilinson $t$-structure
    on $\F\Ind(\Cscr)$. However, it may be desirable to consider only a certain class of
    filtrations, $\F^b\Cscr\subseteq\F\Ind(\Cscr)$ consisting of the filtrations $\F^\star M$ in $\F\Cscr$ such
    that $\F^i M\we 0$ for $i$ sufficiently large and $\gr^iM\we 0$ for $i$ sufficiently small.
    (These are the eventually zero, eventually constant filtrations.) The Beilinson $t$-structure
    on $\F\Ind(\Cscr)$ restricts to a $t$-structure on $\F^b\Cscr$ whose heart is the abelian
    category $\Ch^{\bullet,b}(\Cscr^\heart)$ of bounded cochain complexes of objects of
    $\Cscr^\heart$. To see this, note that the truncation functors $\tau_{\geq 0}^\B$ and
    $\tau_{\leq 0}^\B$ on $\F\Ind(\Cscr)$ preserve $\F^b\Cscr$.
\end{remark}

The following construction will place an important role in the analysis of the d\'ecalage functor.

\begin{construction}[Realization of cochain complexes]\label{const:realization}
    Let $\Cscr$ be a stable $\infty$-category admitting sequential limits and colimits with
    a fixed $t$-structure $(\Cscr_{\geq 0},\Cscr_{\leq 0})$. There is an
    induced functor $\Ch^\bullet(\Cscr^\heart)\rightarrow\Cscr$ defined by the composition
    $$\Ch^\bullet(\Cscr^\heart)\hookrightarrow(\widehat{\F}\Cscr)_{\geq
    0}\hookrightarrow\widehat{\F}\Cscr\hookrightarrow\F\Cscr\xrightarrow{|-|}\Cscr.$$
    In other words, this functor takes a cochain complex $C^\cs$ to
    $|\sigma^\fs C|$, the object underlying the stupid filtration
    $\sigma^\fs C$ associated to $C^\cs$. Write $|C^\cs|$ for this underlying
    object.
\end{construction}

\begin{lemma}\label{lem:cohomology_of_complex}
    Let $\Cscr$ be a stable $\infty$-category admitting sequential limits and colimits with
    a fixed $t$-structure $(\Cscr_{\geq 0},\Cscr_{\leq 0})$.
    If $C^\bullet$ is a cochain complex, then there is a natural isomorphism $\pi_n(|C^\bullet|)\iso\H^{-n}(C^\bullet)$.
\end{lemma}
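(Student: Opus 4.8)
The plan is to compute $\pi_n|C^\bullet|$ by reducing, through connectivity estimates, to a three-term window around cohomological degree $-n$, and then to read off the answer from the differentials of that window. The two facts that make this work are that realization is exact and that it is computed by the stupid filtration. First I would observe that the realization functor $\Ch^\bullet(\Cscr^\heart)\we(\widehat\F\Cscr)^\heart\hookrightarrow\widehat\F\Cscr\hookrightarrow\F\Cscr\xrightarrow{|-|}\Cscr$ carries short exact sequences of cochain complexes to cofiber sequences: a short exact sequence in the heart of the Beilinson $t$-structure becomes a fiber sequence in $\widehat\F\Cscr$, and $|-|\colon\F\Cscr\to\Cscr$ preserves cofiber sequences since it is a left adjoint. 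Hence $\pi_*|-|$ takes short exact sequences of complexes to long exact sequences, just as $\H^{-*}$ does. The base case is a complex concentrated in a single degree $d$: its stupid filtration has the single nonzero graded $\gr^d\we C^d[-d]$, so $|C^\bullet|\we C^d[-d]$ and $\pi_n|C^\bullet|\iso C^d$ precisely when $n=-d$, in agreement with $\H^{-n}$.

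Next I would bound the realizations of brutal truncations. Writing $C^{\bullet\geq p}$ and $C^{\bullet<p}$ for the brutal subcomplex in cohomological degrees $\geq p$ and its quotient, the gradeds $C^s[-s]$ of $C^{\bullet\geq p}$ lie in $\Cscr_{\leq -p}$; since the stupid filtration is complete, $|C^{\bullet\geq p}|\we\lim_m|C^{\bullet[p,m)}|$ is a limit of finite extensions of these gradeds, and $\Cscr_{\leq -p}$ is closed under extensions and limits, so $|C^{\bullet\geq p}|\in\Cscr_{\leq -p}$. Dually the gradeds of $C^{\bullet<p}$ lie in $\Cscr_{\geq 1-p}$, and since $|C^{\bullet<p}|$ is the sequential colimit of finite extensions of them while $\Cscr_{\geq 1-p}$ is closed under extensions and colimits (its inclusion is a left adjoint), $|C^{\bullet<p}|\in\Cscr_{\geq 1-p}$. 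Feeding the short exact sequences $0\to C^{\bullet\geq -n+2}\to C^\bullet\to C^{\bullet<-n+2}\to 0$ and $0\to C^{\bullet[-n-1,-n+1]}\to C^{\bullet<-n+2}\to C^{\bullet<-n-1}\to 0$ into the long exact sequence for $\pi_*|-|$, the vanishing of $\pi_n$ and $\pi_{n\pm1}$ of the truncated-away pieces yields a natural isomorphism $\pi_n|C^\bullet|\iso\pi_n|D^\bullet|$, where $D^\bullet$ is the window $C^{-n-1}\xrightarrow{f}C^{-n}\xrightarrow{g}C^{-n+1}$.

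Finally I would compute $\pi_n|D^\bullet|$ by realizing the stupid filtration of $D^\bullet$ in two stages. The two-term brutal truncation $D^{\bullet\geq -n}=[C^{-n}\xrightarrow{g}C^{-n+1}]$ realizes, via its boundary map $\delta=g$, to $\fib(g)[n]$; then the remaining cofiber sequence $D^{\bullet\geq -n}\to|D^\bullet|\to\gr^{-n-1}\we C^{-n-1}[n+1]$ gives $|D^\bullet|\we\fib\!\big(C^{-n-1}\to\fib(g)\big)[n+1]$. The long exact sequence of the fiber sequence $\fib(C^{-n-1}\to\fib(g))\to C^{-n-1}\to\fib(g)$, together with $\pi_0\fib(g)\iso\ker g$ and $\pi_{-1}C^{-n-1}=0$, then yields $\pi_n|D^\bullet|\iso\coker\!\big(C^{-n-1}\to\ker g\big)=\ker g/\im f=\H^{-n}(C^\bullet)$. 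As every step is functorial in $C^\bullet$, the resulting isomorphism is natural.

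The crux, and the only genuinely delicate step, is the identification of the boundary map $C^{-n-1}\to\fib(g)$ produced by the filtration with the datum of $f$ together with a nullhomotopy of $g\circ f$. This is exactly where the coherent structure enters: by Theorem~\ref{thm:ariotta} and Lurie's construction of the associated coherent cochain complex, the boundary maps $\delta$ for the stupid filtration are the differentials of $C^\bullet$ and the coherent nullhomotopy $\delta\circ\delta\we 0$ of Lemma~\ref{lem:nullhomotopy} is precisely the witness that $g\circ f=0$. Once this is in place, the induced map on $\pi_0$ from $C^{-n-1}$ to $\ker g$ is $f$ and the computation closes; keeping track of the shifts $[n]$ and $[n+1]$ in the two-stage realization is the main bookkeeping to be careful about.
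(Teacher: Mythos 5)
Your proof is correct and takes essentially the same route as the paper: the paper likewise uses the completeness of the stupid filtration together with the facts that $\tau_{\geq 0}$ preserves limits and $\tau_{\leq 0}$ preserves sequential colimits to reduce to the three-term window $\gr^{[-1,2)}_\sigma C$ (your $D^\bullet$ in the case $n=0$), identifies $\pi_0\gr^{[0,2)}_\sigma C\iso\ker(g)$ just as you identify $\pi_0\fib(g)$, and then computes $\pi_0$ of the window as $\coker(C^{-1}\rightarrow Z^0)$, exactly your final step. The only cosmetic difference is that you run the connectivity estimates through exactness of $|-|$ on short exact sequences of complexes and bounds on realizations of brutal truncations, where the paper applies the truncation functors directly to the partial realizations $\gr^{[-1,m)}_\sigma C$ and $\gr^{[-m,\infty)}_\sigma C$.
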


\begin{proof}
    We prove the case $n=0$. There is a natural isomorphism $\pi_0\gr^{[-1,2)}_\sigma C\iso\H^0(C^\bullet)$. Indeed,
    $\pi_0\gr^{[0,2)}_\sigma C$ fits into a short exact sequence
    $$0\rightarrow\pi_0\gr^{[0,2)}_\sigma C\rightarrow C^0\xrightarrow{\d}
    C^1\rightarrow\pi_{-1}\gr^{[0,2)}_\sigma C\rightarrow 0,$$
    so $\pi_0\gr^{[0,2)}_\sigma C$ is identified with $Z^0$, the object of cycles in degree $0$.
    Now, the fiber sequence $\gr^{[0,2)}_\sigma C\rightarrow\gr^{[-1,2)}_\sigma C\rightarrow\gr^{-1}_\sigma C$
    induces an exact sequence $$C^{-1}\xrightarrow{\d}
    Z^0\rightarrow\pi_0\gr^{[-1,2)}_\sigma C\rightarrow 0,$$
    which proves the claim.
    We also have that $\tau_{\geq
    0}\gr_\sigma^{[-1,\infty)}C\we\lim_n\tau_{\geq 0}\gr_\sigma^{[-1,n)}C$ as $\tau_{\geq 0}$
    preserves limits and since the filtration $\sigma^\star C$ is complete.
    However, $\tau_{\geq 0}\gr^{[-1,n+1)}_\sigma C\rightarrow\tau_{\geq 0}\gr^{[-1,n)}_\sigma C$ is
    an equivalence for $n\geq 2$ since it is the fiber of a map $\gr^{[-1,n)}_\sigma C\rightarrow(\gr^{n}_\sigma
    C)[1]$, $\tau_{\geq 0}(\gr^n_\sigma C)[1]\we\tau_{\geq 0}(C^n[-n+1])\we 0$ for $n\geq 2$, and $\tau_{\geq 0}$ preserves
    fibers. It follows that $\pi_0\gr^{[-1,\infty)}_\sigma C\iso\H^0(C^\bullet)$.
    For $m\geq 1$, the map $\tau_{\leq
    0}\sigma^{[-m,\infty)}_\sigma C\rightarrow\tau_{\leq 0}\sigma^{[-m-1,\infty)}_\sigma C$ is an
    equivalence since $\tau_{\leq 0}$ preserves cofibers and this map is a cofiber of a map
    $\gr^{-m-1}_\sigma C[-1]\rightarrow\sigma^{[-m-1,\infty)}_\sigma C$, where $\gr^{-m-1}_\sigma
    C[-1]\we C^{-m-1}[m]$ is connective for $m\geq 1$. As $\tau_{\leq 0}$ commutes with colimits,
    it follows that $\tau_{\leq 0}C\we\colim\tau_{\leq 0}\gr^{[-m,\infty)}_\sigma C\we\tau_{\leq
    0}\gr^{[-1,\infty)}_\sigma C$. This completes the proof.
\end{proof}

\begin{corollary}
    Suppose that $\Cscr$ is a stable $\infty$-category with sequential limits and colimits. If
    $\Cscr$ is equipped with a $t$-structure which is separated, then the functor
    $$\Ch^\bullet(\Cscr^\heart)\xrightarrow{C^\bullet\mapsto|C^\bullet|}\Cscr$$ factors through $\D(\Cscr^\heart)$, the
    $\infty$-categorical localization of $\Ch^\bullet(\Cscr)$ at the quasi-isomorphisms.
\end{corollary}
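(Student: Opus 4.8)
The plan is to invoke the universal property of the $\infty$-categorical localization. By definition $\D(\Cscr^\heart)$ is the localization of the ordinary category $\Ch^\bullet(\Cscr^\heart)$ at the class $W$ of quasi-isomorphisms, so a functor out of $\Ch^\bullet(\Cscr^\heart)$ factors through $\D(\Cscr^\heart)$, uniquely up to contractible choice, if and only if it carries every quasi-isomorphism to an equivalence. Thus the whole statement reduces to checking that the realization functor $C^\bullet\mapsto|C^\bullet|$ of Construction~\ref{const:realization} inverts quasi-isomorphisms, and it is precisely at this point that the separatedness hypothesis is consumed.

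First I would fix a quasi-isomorphism $f\colon C^\bullet\to D^\bullet$, so that $\H^n(f)$ is an isomorphism for every $n$. The key input is that the isomorphism $\pi_n(|C^\bullet|)\iso\H^{-n}(C^\bullet)$ supplied by Lemma~\ref{lem:cohomology_of_complex} is natural in $C^\bullet$; consequently the induced map $\pi_n(|f|)$ is identified with $\H^{-n}(f)$ and is an isomorphism for every $n$. Since $\Cscr$ is stable, $|f|$ has a fiber $F=\fib(|f|)$, and the long exact sequence of homotopy objects attached to the fiber sequence $F\to|C^\bullet|\to|D^\bullet|$ forces $\pi_nF=0$ for all $n$. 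I emphasize that no exactness of the realization functor is needed here: the fiber sequence and its long exact sequence exist simply because $\Cscr$ is stable and carries a $t$-structure.

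The crux, and the only step using the hypothesis, is to deduce $F\we 0$ from $\pi_\ast F=0$; that is, separatedness is exactly what makes the family $\{\pi_n\}$ jointly conservative. I would argue by a truncation induction of the same flavor as in Remark~\ref{rem:bounded}. Applied to $Y=\tau_{\leq 0}F$, the fiber sequences $\tau_{\geq m}Y\to Y\to\tau_{\leq m-1}Y$ together with the vanishing $\pi_mF=0$ (which kills the relevant top homotopy object at each stage) show inductively that $Y\in\Cscr_{\leq -n}$ for every $n\geq 0$, whence $Y\we 0$ by right separatedness and $F\in\Cscr_{\geq 1}$; dually, the fiber sequences $\tau_{\geq m+1}F\to F\to\tau_{\leq m}F$ with $\pi_mF=0$ give $F\in\Cscr_{\geq n}$ for all $n$, so $F\we 0$ by left separatedness. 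Hence $|f|$ is an equivalence.

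I expect this final conservativity step to be the main thing to write carefully, although it is routine once both separatedness conditions are in hand; everything upstream is formal, resting only on the universal property of localization, the naturality clause of Lemma~\ref{lem:cohomology_of_complex}, and the existence of fibers and long exact sequences in a stable $\infty$-category equipped with a $t$-structure.
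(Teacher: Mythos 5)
Your proposal is correct and follows exactly the route the paper intends: the corollary is stated as an immediate consequence of Lemma~\ref{lem:cohomology_of_complex} (the paper gives no separate proof), namely that the natural isomorphism $\pi_n(|C^\bullet|)\iso\H^{-n}(C^\bullet)$ shows realization sends quasi-isomorphisms to $\pi_*$-isomorphisms, separatedness makes $\{\pi_n\}$ jointly conservative, and the universal property of the localization finishes. Your truncation-induction verification of conservativity is the standard argument and is carried out correctly.
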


We conclude this section with some generalities on the properties of Beilinson $t$-structures on
$\widehat{\F}\Cscr$ as inherited from a $t$-structure on $\Cscr$. This material will not be used
below. For background on prestable $\infty$-categories and the
terminology used below, see~\cite[App.~C]{sag}.

\begin{proposition}\label{prop:inheritance}
    Let $I$ be a $1$-category and let $\Cscr$ be a stable $\infty$-category
    equipped with a $t$-structure $(\Cscr_{\geq 0},\Cscr_{\leq 0})$. The
    following properties of $(\Cscr_{\geq 0},\Cscr_{\leq 0})$ are inherited by $\Fun(I,\Cscr)$ and, if $I$ is
    pointed, they are inherited by $\Fun_*(I,\Cscr)$:
    \begin{enumerate}
        \item[{\em (a)}] right or left separatedness;
        \item[{\em (b)}] right or left completeness;
        \item[{\em (c)}] compatibility with filtered colimits;
        \item[{\em (d)}] Grothendieck;
        \item[{\em (e)}] compactly generation.
    \end{enumerate}
    If, moreover, $I$ has the property that $\Hom_I(i,i)=\{\id_i\}$ for all $i\in I$,
    then the following property is inherited by $\Fun(I,\Cscr)$ and $\Fun_*(I,\Cscr)$:
    \begin{enumerate}
        \item[{\em (f)}] compatibility with filtered colimits and either weakly $n$-complicial or
            $n$-complicial.
    \end{enumerate}
\end{proposition}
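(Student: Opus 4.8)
The plan is to reduce every assertion to a pointwise statement along the evaluation functors $\ev_i\colon\Fun(I,\Cscr)\rightarrow\Cscr$, $F\mapsto F(i)$, for $i\in I$. With the pointwise $t$-structure $(\Fun(I,\Cscr_{\geq 0}),\Fun(I,\Cscr_{\leq 0}))$, each $\ev_i$ is $t$-exact, preserves all limits and colimits that exist, and the family $\{\ev_i\}_{i\in I}$ is jointly conservative; in particular truncations, homotopy objects, and associated gradeds are computed pointwise. First I would record the left adjoint $i_!$ to $\ev_i$ (left Kan extension along $\{i\}\hookrightarrow I$), which for a $1$-category $I$ is given objectwise by $(i_!X)(j)\we\coprod_{\Hom_I(i,j)}X$. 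Because $\ev_i$ is exact and cocontinuous, $i_!$ carries connective objects to connective objects, compact objects to compact objects (since $\Map_{\Fun(I,\Cscr)}(i_!X,-)\we\Map_\Cscr(X,\ev_i(-))$ and $\ev_i$ preserves filtered colimits), and projective objects to projective objects (since $\ev_i$ preserves geometric realizations). The same applies with $\Fun_*(I,\Cscr)$ in place of $\Fun(I,\Cscr)$, viewing the former as the full subcategory of functors sending the basepoint to $0$: the pointwise $t$-structure restricts since $\tau_{\geq 0}0\we 0$, and one uses the evaluations at the non-basepoint objects together with the reduced left Kan extensions.

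Given this, properties (a)--(c) are immediate. For (a), $F\in\bigcap_n\Fun(I,\Cscr)_{\geq n}$ forces $\ev_iF\in\bigcap_n\Cscr_{\geq n}=0$ for every $i$, whence $F\we 0$ by joint conservativity; the right-separated case and the coconnective intersections are identical. For (b), the canonical comparison $F\rightarrow\lim_n\tau_{\leq n}F$ (resp.\ $\colim_n\tau_{\geq -n}F\rightarrow F$) is an equivalence iff it is so after each $\ev_i$, and $\ev_i$ preserves the truncation tower and its limit (resp.\ colimit), reducing to left/right completeness of $\Cscr$. For (c), $\Fun(I,\Cscr)_{\leq 0}$ is closed under filtered colimits because $\ev_i$ preserves and jointly detects filtered colimits and coconnectivity.

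For (d) and (e) I would invoke presentability of functor categories: if $\Cscr_{\geq 0}$ is presentable then so is $\Fun(I,\Cscr_{\geq 0})\we\Fun(I,\Cscr)_{\geq 0}$ for $I$ small, and the AB5 condition (left exactness of filtered colimits) is checked pointwise, so Grothendieck-ness is inherited. For compact generation, if $\{G_\alpha\}$ is a set of compact generators of $\Cscr_{\geq 0}$, then $\{i_!G_\alpha\}_{i,\alpha}$ is a set of compact connective generators of $\Fun(I,\Cscr)_{\geq 0}$: compactness and connectivity were noted above, and they generate since $\Map(i_!G_\alpha,F)\we\Map(G_\alpha,F(i))$ detects whether each $F(i)\we 0$. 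The pointed versions use the reduced Kan extensions in the same way.

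The main obstacle is (f). Here I would recall from~\cite[App.~C]{sag} that (weak) $n$-compliciality is characterized by the existence of a generating family of (compact) projective objects subject to an $n$-truncation bound, and propose the family $\{i_!P\}$ obtained by applying the left Kan extensions to such generators $P$ of $\Cscr$. Projectivity, compactness (in the strong case), and generation follow from the first paragraph, so the crux is that $i_!P$ still lies in $\Fun(I,\Cscr)_{[0,n]}$: by the formula $(i_!P)(j)\we\coprod_{\Hom_I(i,j)}P$ this is a coproduct of $n$-truncated objects, and such a coproduct is again $n$-truncated precisely because the $t$-structure is compatible with filtered colimits---this is why (f) bundles that hypothesis. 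The hypothesis $\Hom_I(i,i)=\{\id_i\}$ then enters in controlling the value $(i_!P)(i)\we\coprod_{\Hom_I(i,i)}P$: when it equals $P$ one has $\ev_i\circ i_!\we\id$, so the Kan extensions interact with truncation exactly as over $\Cscr$, whereas nontrivial endomorphisms (already for $I=\bB G$ with $G$ a finite group) produce a folding whose induced endomorphism structure must be shown not to degrade the bound $n$. I expect this verification---that no higher structure is introduced and that $n$ is preserved rather than increased---to be the technical heart of the statement, with the remaining bookkeeping and the passage to $\Fun_*(I,\Cscr)$ routine.
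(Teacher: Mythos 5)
Your treatment of (a)--(e) is essentially the paper's own proof: the pointwise $t$-structure with $\Fun(I,\Cscr)_{\leq 0}\we\Fun(I,\Cscr_{\leq 0})$, (co)limits in the functor category computed pointwise for (a)--(c), presentability of $\Fun(I,\Cscr)$ and $\Fun(I,\Cscr)_{\geq 0}$ for (d), and for (e) the family $i_!X$ with $X$ ranging over compact generators of $\Cscr$, compact because $\ev_i\we i^*$ preserves filtered colimits. No issues there.

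Part (f) is where your proposal has a genuine gap. You base the argument on the claim that (weak) $n$-compliciality ``is characterized by the existence of a generating family of (compact) projective objects subject to an $n$-truncation bound.'' That is not the definition in~\cite[App.~C]{sag}, and it fails as a characterization: $n$-compliciality asks that \emph{every} connective object $X$ receive a map $Q\rightarrow X$ with $Q\in\Cscr_{[0,n]}$ inducing an epimorphism on $\pi_0$, with no projectivity or compactness anywhere. Already for $n=0$, the connective derived $\infty$-category $\D(\Ascr)_{\geq 0}$ of a Grothendieck abelian category $\Ascr$ with no nonzero projectives (sheaves on a space, say) is $0$-complicial but has no projective generating family, so your objects $i_!P$ need not exist. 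The repair is to quantify per object, which is exactly what the paper does: given connective $X\in\Fun(I,\Cscr)_{\geq 0}$, choose for each $i$ a $\pi_0$-epimorphism $y_i\rightarrow X(i)$ with $y_i\in\Cscr_{[0,n]}$, pass by adjunction to $i_!Y_i\rightarrow X$, and assemble $\bigoplus_{i\in I}i_!Y_i\rightarrow X$. Your own observations then close the argument: each value $(i_!Y_i)(j)\we\bigoplus_{f\in\Hom_I(i,j)}Y_i$ lies in $\Cscr_{[0,n]}$ because the $t$-structure is compatible with filtered colimits, and $\pi_0$-surjectivity at each $j$ is witnessed by the identity component of the $i=j$ summand. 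Relatedly, the ``technical heart'' you defer --- showing that a folding action coming from nontrivial endomorphisms, as for $I=\bB G$, does not degrade the bound --- is vacuous here: the hypothesis $\Hom_I(i,i)=\{\id_i\}$ excludes such $I$ outright, and the paper invokes it precisely to justify the displayed value formula for $i_!$. Once the per-object choice replaces the projective generating family, nothing further remains to verify; as written, your (f) both rests on an incorrect characterization and explicitly leaves its key step unproven.
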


\begin{proof}
    For part (a), if $X\colon I\rightarrow\Cscr$ is an object of
    $\Fun(I,\Cscr)$ which is in $\cap_n\Fun(I,\Cscr)_{\leq
    n}\we\cap_n\Fun(I,\Cscr_{\leq n})$, then it must vanish if $\Cscr$ is
    right separated. Similarly for left separatedness.

    If $\Cscr\we\lim_n\Cscr_{\leq n}$, then
    $\Fun(I,\Cscr)\we\lim_n\Fun(I,\Cscr_{\leq n})\we\lim_n\Fun(I,\Cscr)_{\leq n}$,
    so $\Cscr$ is left complete. Similarly for right completeness. This gives
    (b).

    If $\Cscr_{\leq 0}$ is closed under filtered colimits in $\Cscr$, then the
    same is true for $\Fun(I,\Cscr)_{\leq 0}\we\Fun(I,\Cscr_{\leq 0})$ in
    $\Fun(I,\Cscr)$ as colimits in a functor category are computed pointwise.
    This proves (c).

    Since the $t$-structure on $\Cscr$ is Grothendieck, it is right complete
    and compatible with filtered colimits. Moreover, $\Cscr$ and $\Cscr_{\geq
    0}$ are presentable. These are inherited by $\Fun(I,\Cscr)$, which shows
    part (d).

    For part (e), if $X\in\Cscr$ is compact, then $i_!X$ is compact in
    $\Fun(I,\Cscr)$ for any $i\in I$ since $i^*$ preserves filtered colimits.
    The collection of $i_!X$ as $i$ ranges over $I$ and $X$ ranges over a set
    of compact generators of $\Cscr$ provides a set of compact generators for
    $\Fun(I,\Cscr)$.

    Let $X\colon I\rightarrow\Cscr_{\geq 0}$ be an object of
    $\Fun(I,\Cscr)_{\geq 0}$ and assume that $\Cscr$ is $n$-complicial.
    For each object $X(i)\we i^*X\in\Cscr_{\geq 0}$, this means that there is a
    morphism $y_i\rightarrow X(i)$ inducing a surjection on $\pi_0$ in
    $\Cscr^\heart$ and such that $y_i\in\Cscr_{[0,n]}$. By adjunction, there is a map $i_!Y_i\rightarrow X$ in
    $\Fun(I,\Cscr)$. Computing the value of $i_!Y_i$ at $j\in I$ yields
    $$\bigoplus_{f\in\Hom_I(i,j)}Y_i$$ in $\Cscr$ by the assumption on $I$, which is in $\Cscr_{[0,n]}$
    since the $t$-structure is compatible with filtered colimits. In particular, $i_!Y_i$ is in
    $\Fun(I,\Cscr)_{[0,n]}$ and $i_!Y_i\rightarrow X(i)$ is surjective on
    $i^*\pi_0$. Taking a sum over $i\in I$ of the $i_!Y_i\rightarrow X$ yields
    $$\bigoplus_{i\in I}i_!Y_i\rightarrow X.$$ The map is surjective on $\pi_0$
    and the sum is in $\Fun(I,\Cscr)_{[0,n]}$ by compatibility of the
    $t$-structure on $\Cscr$ with filtered colimits. This proves $n$-complicialness.
    The proof for weak $n$-compliciality is the same, so (f) is proved.
\end{proof}

\begin{corollary}
    Let $\Ascr$ be a Grothendieck abelian category. The $\infty$-category
    $\FDhat(\Ascr)\we\Ch^\bullet(\D(\Ascr))$ is Grothendieck and
    $0$-complicial: the natural map
    $\D(\Ch^\bullet(\Ascr))\rightarrow\FDhat(\Ascr)\we\Ch^\bullet(\D(\Ascr))$ is an equivalence.
\end{corollary}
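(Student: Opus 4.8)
The plan is to obtain the corollary as a direct application of Proposition~\ref{prop:inheritance} to the functor category $\Ch^\cs(\Cscr)=\Fun_*(\Xi^\op,\Cscr)$ with $\Cscr=\D(\Ascr)$, combined with the characterization of $0$-complicial Grothendieck prestable $\infty$-categories as derived categories of their hearts. First I would record the input: the derived $\infty$-category $\D(\Ascr)$ of a Grothendieck abelian category is itself Grothendieck, compatible with filtered colimits, and $0$-complicial, with heart $\D(\Ascr)^\heart\we\Ascr$. Indeed, $\D(\Ascr)$ is the prototypical $0$-complicial Grothendieck prestable category; see~\cite[App.~C]{sag}.

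Next I would check that $I=\Xi^\op$ meets the hypotheses of Proposition~\ref{prop:inheritance}(f). It is pointed, and --- reading the condition $\Hom_I(i,i)=\{\id_i\}$ in the reduced sense appropriate to pointed functors, where the zero morphism $\ast$ is the one discarded by the pointed left Kan extension used in the proof of (f) --- the only nonzero endomorphism of each object of $\Xi^\op$ is the identity. Granting this, parts (d) and (f) of the proposition (the latter with $n=0$) show that $\FDhat(\Ascr)\we\Ch^\cs(\D(\Ascr))=\Fun_*(\Xi^\op,\D(\Ascr))$ is Grothendieck and $0$-complicial; the first equivalence is Theorem~\ref{thm:ariotta}. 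Its heart is computed pointwise as $\Fun_*(\Xi^\op,\D(\Ascr)^\heart)=\Fun_*(\Xi^\op,\Ascr)=\Ch^\cs(\Ascr)$, the ordinary abelian category of cochain complexes in $\Ascr$.

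It then remains to identify this $0$-complicial Grothendieck category with the derived category of its heart. I would invoke the characterization that the canonical $t$-exact, colimit-preserving realization functor $\D(\Cscr^\heart)\to\Cscr$ out of the derived $\infty$-category of the heart is an equivalence precisely when $\Cscr$ is $0$-complicial~\cite[App.~C]{sag}. Applied to $\Cscr=\Ch^\cs(\D(\Ascr))$, this produces the asserted equivalence $\D(\Ch^\cs(\Ascr))\we\Ch^\cs(\D(\Ascr))\we\FDhat(\Ascr)$, compatible with the natural comparison map.

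The inheritance of the Grothendieck and $0$-complicial properties is immediate from Proposition~\ref{prop:inheritance}, and the heart computation is formal, so the main obstacle is ensuring that the comparison is an equivalence at the level of the \emph{full, unbounded} stable categories rather than only on connective or bounded subcategories. If the cited characterization is only available in its prestable (connective) form, one upgrades it as follows: the comparison functor $\Phi\colon\D(\Ch^\cs(\Ascr))\to\Ch^\cs(\D(\Ascr))$ is $t$-exact and colimit-preserving and restricts to an equivalence on connective parts, hence, by shifting, on each $\Cscr_{\geq -n}$; since both $t$-structures are right-complete --- the target by Proposition~\ref{prop:inheritance}(b) applied to the right-complete $\D(\Ascr)$, the source as a derived category --- each object is the colimit $\colim_n\tau_{\geq -n}$ of its connective covers, and $\Phi$ is therefore both essentially surjective and fully faithful. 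Getting this completeness bookkeeping right, and confirming that $\Phi$ is indeed the colimit-preserving $t$-exact realization, is the step requiring the most care.
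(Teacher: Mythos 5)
Your proof is correct and follows essentially the route the paper intends (the corollary is stated without separate proof precisely because it is the combination you describe): apply Proposition~\ref{prop:inheritance}, parts (d) and (f) with $n=0$, to $\Fun_*(\Xi^\op,\D(\Ascr))\we\Ch^\bullet(\D(\Ascr))\we\FDhat(\Ascr)$, identify the heart pointwise with $\Ch^\bullet(\Ascr)$, and then invoke the classification of Grothendieck prestable $\infty$-categories from~\cite[App.~C]{sag}, with right completeness handling the passage to the unbounded stabilization. The only precision worth adding is that the SAG characterization you cite requires \emph{separatedness} in addition to $0$-compliciality, but this is inherited from $\D(\Ascr)$ by part (a) of the same proposition, so your argument goes through as written.
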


\begin{example}
    $\FDhat(\bZ)\we\Ch^\bullet(\D(\bZ))\we\D(\Ch^\bullet(\bZ))$.
\end{example}

\begin{remark}
    Let $\Ascr$ be a Grothendieck abelian $n$-category. There is a canonical
    separated $n$-complicial Grothendieck prestable $\infty$-category
    $\D(\Ascr)_{\geq 0}$ associated to $\Ascr$ by~\cite[Prop.~C.5.4.5]{sag}
    with presentable stabilization $\D(\Ascr)$. By construction,
    $\D(\Ascr)_{[0,n-1]}\we\Ascr$. By
    Proposition~\ref{prop:inheritance}, the $\infty$-category
    $\FDhat(\D(\Ascr))\we\Ch^\bullet(\D(\Ascr))$ is equivalent to
    $\D(\Ch^\bullet(\Ascr))$, the derived $\infty$-category of the Grothendieck
    abelian $n$-category $\Ch^\bullet(\Ascr)$.
\end{remark}

\section{D\'ecalage}\label{sec:decalage}

This section contains the definition of a spectral sequence in an abelian category as well as two
constructions of the spectral sequence associated to a filtered object in a stable
$\infty$-category equipped with a $t$-structure, one via d\'ecalage and one following
Lurie~\cite[Prop.~1.2.2.7]{ha}. The main theorem is that the definitions agree.

\begin{definition}[Spectral sequences]\label{def:ss}
    Let $\Ascr$ be an abelian category. A spectral sequence in $\Ascr$ starting at the $a$th page consists of
    \begin{enumerate}
        \item[(i)] objects $\E^r_{s,t}$ of $\Ascr$ for $r\geq a$ and $s,t\in\bZ$,
        \item[(ii)] differentials $\d^r_{s,t}\colon\E^r_{s,t}\rightarrow\E^r_{s-r,t+r-1}$ for
            $r\geq a$ and $s,t\in\bZ$, and
        \item[(iii)] isomorphisms between the cohomology at $\E^r_{s,t}$ with respect to
            $\d^r$ and $\E^{r+1}_{s,t}$ for $r\geq a$ and $s,t\in\bZ$.\footnote{Some
            sources, such as~\cite{mccleary}, require only that the $\E^{r+1}$-page be isomorphic
            to the cohomology of the $\E^r$-page, without fixing the isomorphism.}
    \end{enumerate}
    For fixed $r\geq a$, the collection $\{\E^r_{s,t}\}$ together with the differentials $\{\d^r_{s,t}\}$
    form the $r$th page of the spectral sequence.
\end{definition}

\begin{remark}
    In this paper, most spectral sequences will start at the $1$st page. See
    Section~\ref{sec:reindexing} for comments about alternative indexing conventions.
\end{remark}

The primary sources of spectral sequences are strictly filtered complexes (see for
example~\cite{mccleary,weibel-homological}), exact couples (see for example~\cite[Sec.~5.9]{weibel-homological}), and
filtrations in stable $\infty$-categories with fixed $t$-structures.
Note that while part (iii) of Definition~\ref{def:ss}
implies that the $\E^r$-page of a spectral sequence determines the objects on the $\E^{r+1}$-page,
it does not generally determine the $\d^{r+1}$-differential. This must be constructed in some way.

Fix a stable $\infty$-category $\Cscr$ with sequential limits and colimits and let $\F^\star$ be
a decreasing filtration in $\Cscr$. Assume also that $\Cscr$
is equipped with a $t$-structure. As mentioned above, the basic problem is to go from
presumably accessible information about the graded pieces, specifically the homotopy objects
$$\pi_*\gr^\star_\F,$$ 
and to learn something about $\pi_*\F^{-\infty}$.
There are two fundamental obstructions to a class $x\in\pi_{i}\gr^{j}_\F$
contributing to $\pi_{i}\F^{-\infty}$: the obstructions to lifting
$x$ to $\pi_i\F^{j}$ and then the possibility that a lift
$\tilde{x}\in\pi_{i}\F^{j}$ is in the kernel of the map
$$\pi_{i}\F^{j}\rightarrow\pi_{i}\F^{-\infty}.$$
A spectral sequence is a way of packaging these wrinkles together in a
reasonable format.

\begin{construction}
    Associated to $\F^\star$ is the coherent cochain complex
    $$\cdots\rightarrow\gr^{-s-1}_\F[-s-1]\rightarrow\gr^{-s}_\F[-s]\rightarrow\gr^{-s+1}_\F[-s+1]\rightarrow\cdots$$
    discussed in the previous section. Applying $\pi_t$ with respect to the fixed
    $t$-structure on $\Cscr$ to the coherent cochain
    complex produces a cochain complex in $\Cscr^\heart$ of the form
    $$\cdots\rightarrow\pi_t\gr^{-s-1}_\F[-s-1]\rightarrow\pi_t\gr^{-s}_\F[-s]\rightarrow\pi_t\gr^{-s+1}_\F[-s+1]\rightarrow\cdots,$$
    which can be rewritten as
    $$\cdots\rightarrow\pi_{s+t+1}\gr^{-s-1}_\F\rightarrow\pi_{s+t}\gr^{-s}_\F\rightarrow\pi_{s+t-1}\gr^{-s+1}_\F\rightarrow\cdots.$$
\end{construction}

\begin{definition}[$\E^1$-page]\label{def:e1}
    The $\E^1$-page of the spectral sequence associated to $\F^\star$
    is the graded chain complex $$\E^1_{s,t}=\pi_{s+t}\gr^{-s}M$$ with
    differentials
    $$\d^1_{s,t}\colon\E^1_{s,t}\rightarrow\E^1_{s-1,t}.$$
    In other words, $$\E^1_{\bullet,t}=\pi_t^\B(\F)^{-\bullet},$$
    the chain complex associated to cochain complex $\pi_t^\B(\F)$.
    In particular, $\d^1\circ\d^1=0$, so objects $\E^2_{s,t}(\F)\in\Cscr^\heart$ are defined as the homology of
    the $\E^1$-page with respect to $\d^1$.
\end{definition}

\begin{construction}[D\'ecalage]\label{const:decalage}
    Let $\Cscr$ be a stable $\infty$-category with sequential limits and colimits which
    admits a $t$-structure $(\Cscr_{\geq 0},\Cscr_{\leq 0})$. Let $\F^\star$
    be a filtered object of $\Cscr$. Let $\tau_{\geq\star}^\B(\F)$
    denote the Whitehead tower of $\F^\star$ with respect to the Beilinson
    $t$-structure on $\F\Cscr$ (associated to the fixed $t$-structure on
    $\Cscr$). Note that each $\tau_{\geq n}^\B(\F)$ is itself a
    filtered object of $\Cscr$. By applying realization to the Whitehead tower
    $$\cdots\rightarrow\tau_{\geq n+1}^\B(\F)\rightarrow\tau_{\geq
    n}^\B(\F)\rightarrow\tau_{\geq n-1}^\B(\F)\rightarrow\cdots$$ one obtains a new filtered object
    $$\cdots\rightarrow|\tau_{\geq n+1}^\B(\F)|\rightarrow|\tau_{\geq
    n}^\B(\F)|\rightarrow|\tau_{\geq n-1}^\B(\F)|\rightarrow\cdots$$
    of $\Cscr$, called
    the d\'ecalage of $\F^\star$ and denoted $\Dec(\F)^\star$.
    Since there are natural maps $\tau_{\geq n}^\B(\F)\rightarrow\F^\star$, taking colimits gives natural maps $$|\tau_{\geq
    n}^\B(\F)|\rightarrow|\F^\star|.$$ Thus, if $\F^\star$ is a filtration on $M$, then
    $\Dec(\F)^\star$ can be equipped with the structure of a filtration on $M$ as well.
\end{construction}

\begin{remark}
    D\'ecalage gives an endofunctor of $\F\Cscr$ when $\Cscr$ admits sequential limits and colimits
    and is equipped with a $t$-structure. There is an analogue of d\'ecalage for any sequence of
    full subcategories $\cdots\rightarrow(\F\Cscr)_n\rightarrow(\F\Cscr)_{n-1}\rightarrow\cdots$
    for which the inclusions admits right adjoints. The case here is where
    $(\F\Cscr)_n=(\F\Cscr)_{\geq n}^\B$, the full subcategory of $n$-connective objects with
    respect to the Beilinson $t$-structure.
\end{remark}

\begin{remark}[Graded pieces of d\'ecalage]\label{rem:graded_decalage}
    As $\tau_{\geq n+1}^\B(\F)\rightarrow\tau_{\geq n}^\B(\F)\rightarrow\pi_n^\B(\F)[n]$ forms a cofiber sequence in filtered
    complexes, one finds that the associated graded pieces of $\Dec(\F)^\star$ are given by $$\gr^n_{\Dec(\F)}\we|\pi_n^\B(\F)[n]|\we|\pi_n^\B(\F)|[n].$$
    Under the identification of the heart
    of the Beilinson $t$-structure with cochain complexes, $\pi_n^\B(\F)$ is the cochain complex
    $$\pi_n^\B(\F)^\bullet\colon\cdots\rightarrow\pi_{n+1}\gr^{-1}\rightarrow\pi_n\gr^0\rightarrow\pi_{n-1}\gr^1\rightarrow\pi_{n-2}\gr^2\rightarrow\cdots$$
    where $\pi_n\gr^0$ is placed in cohomological degree $0$.
    Thus, $\gr^n_{\Dec(\F)}$ is the $n$-fold suspension of the realization of this
    cochain complex in the sense of Construction~\ref{const:realization}.
    After reindexing to a chain complex, this complex is precisely the $n$th horizontal line
    on the $\E^1$-page of the spectral sequence of $\F^\star$.
\end{remark}

\begin{definition}[D\'ecalage definition of the spectral sequence]\label{def:ss_decalage}
    Let $\Cscr$ be a stable $\infty$-category which admits sequential limits and colimits and
    suppose $\Cscr$ is equipped with a fixed $t$-structure. If $\F^\star\in\F\Cscr$ is a filtered
    object, then let $\Dec^{(r)}(\F)^\star$ denote the $r$-fold composition of the d\'ecalage
    functor for $r\geq 0$. Let $$\E^{r+1}_{s,t}(\F)=\E^1_{-(r-1)s-rt,rs+(r+1)t}(\Dec^{(r)}(\F))$$
    with the corresponding differentials.
\end{definition}

\begin{lemma}
    Suppose that $\Cscr$ is a stable $\infty$-category with sequential limits and colimits and
    equipped with a $t$-structure. If $\F^\star\in\F\Cscr$, then the pages
    of Definition~\ref{def:ss_decalage} admit the structure of a spectral sequence starting at the $1$st page.
\end{lemma}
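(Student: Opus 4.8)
The plan is to verify the three conditions of Definition~\ref{def:ss} directly, reducing everything to a single page-turning identity together with a short linear-algebra computation on the reindexing matrices. Write $A_r=\begin{pmatrix}-(r-1)&-r\\r&r+1\end{pmatrix}$ for the transformation of Definition~\ref{def:ss_decalage}, so that $\E^{r+1}_{s,t}(\F)=\E^1_{A_r(s,t)}(\Dec^{(r)}(\F))$. Since $\det A_r=1$, each $A_r$ is a bijection of $\bZ^2$. The objects $\E^r_{s,t}$ of condition (i) are supplied by the definition, so the content is to produce the differentials of (ii) and the isomorphisms of (iii).

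First I would define the differential $\d^{r+1}$ on the $\E^{r+1}$-page to be the transport along $A_r$ of the $\d^1$-differential on $\E^1(\Dec^{(r)}(\F))$ furnished by Definition~\ref{def:e1}. Because $\d^1$ has bidegree $(-1,0)$ and $A_r^{-1}(-1,0)=(-(r+1),r)$, the differential $\d^{r+1}$ has bidegree $(-(r+1),r)$, exactly as required of the $(r+1)$st differential in Definition~\ref{def:ss}; and $\d^{r+1}\circ\d^{r+1}=0$ follows from $\d^1\circ\d^1=0$. This settles (ii).

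The heart of the matter is the single page turn $\E^1(\Dec(\G))\iso\E^2(\G)$ for an arbitrary filtration $\G$, where $\E^2(\G)$ is the homology of $(\E^1(\G),\d^1)$. This is already packaged in the earlier results: Remark~\ref{rem:graded_decalage} identifies $\gr^n_{\Dec(\G)}$ with the $n$-fold suspension of the realization of the cochain complex $\pi_n^\B(\G)^\bullet$, whose degree-$q$ term is $\E^1_{-q,n}(\G)$ with differential $\d^1$, and Lemma~\ref{lem:cohomology_of_complex} computes $\pi_k$ of a realization as $\H^{-k}$ of the complex. Unwinding the indices yields a natural isomorphism $\E^1_{s,t}(\Dec(\G))\iso\E^2_{2s+t,-s}(\G)$, i.e.\ transport along $B=\begin{pmatrix}2&1\\-1&0\end{pmatrix}$.

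To finish, I would apply this with $\G=\Dec^{(r)}(\F)$ for every $r\geq 0$. The homology of $(\E^{r+1}(\F),\d^{r+1})$ at $(s,t)$ is, after transporting along $A_r$, the homology $\E^2(\Dec^{(r)}(\F))$ at $A_r(s,t)$, which the page turn reindexes to $\E^1(\Dec^{(r+1)}(\F))$ at $B^{-1}A_r(s,t)$. A direct computation gives $B^{-1}A_r=A_{r+1}$, which is precisely the reindexing defining $\E^{r+2}_{s,t}(\F)$ (the case $r=0$, with $A_0$ the identity, recovers the transition from $\E^1$ to $\E^2$); this produces the isomorphisms of (iii). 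The conceptual input---that d\'ecalage turns the $\E^1$-page into the $\E^2$-page---is entirely contained in Remark~\ref{rem:graded_decalage} and Lemma~\ref{lem:cohomology_of_complex}, so the only genuine obstacle is bookkeeping: keeping the several reindexings consistent across all pages, the crucial identity being $A_{r+1}=B^{-1}A_r$, where the sign and index conventions must be handled with care.
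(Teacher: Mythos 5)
Your proposal is correct and follows essentially the same route as the paper's proof: compute the bidegree of the transported differential via the inverse of the reindexing matrix, then reduce the page-turning isomorphisms to the single case $\E^2(\G)\iso\E^1(\Dec(\G))$ (suitably reindexed), which is exactly what Remark~\ref{rem:graded_decalage} together with Lemma~\ref{lem:cohomology_of_complex} provides. Your identity $A_{r+1}=B^{-1}A_r$ checks out and merely makes explicit the inductive bookkeeping that the paper compresses into ``inductively, it is enough to do this for $r=1$.''
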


\begin{proof}
    The bidegree of the differential on $\E^1(\Dec^{(r)}(\F))$ is $(-1,0)$. To transform these
    coordinates to those of $\E^{r+1}(\F)$ one uses the inverse to the matrix
    $$\begin{pmatrix}-r+1&-r\\r&r+1\end{pmatrix},$$ which is
        $$\begin{pmatrix}r+1&r\\-r&-r+1\end{pmatrix}.$$ Thus, the reindexed differential on
            $\E^{r+1}_{s,t}$ has bidegree
    $(-r-1,r)$, which is the bidegree of the differential on the $\E^{r+1}$-page of a spectral
    sequence.  Thus, it suffices to construct an isomorphism between the cohomology on the
    $\E^r$-page and the $\E^{r+1}$-page. Inductively, it is enough to do this for $r=1$. By
    Remark~\ref{rem:graded_decalage},
    $$\E^2_{s,t}(\F)\iso\H^{-s}(\pi_t^\B(\F)^\bullet)\iso\pi_s(|\pi_t^\B(\F)^\bullet|)\iso\pi_{s+t}(|\pi_t^\B(\F)|[t])\iso\E^1_{-t,s+2t}(\Dec(\F)),$$
    where the second isomorphism follows from
    Lemma~\ref{lem:cohomology_of_complex}.
\end{proof}

It follows that Definition~\ref{def:ss_decalage} gives one definition of the spectral sequence of a
filtered object in $\Cscr$, at least if $\Cscr$ admits sequential limits and colimits.
In the remainder of this section, Lurie's definition of a spectral sequence of a filtered object in
$\Cscr$ is given and the two definitions are compared.

\begin{construction}
    Let $\Cscr$ be a stable $\infty$-category with a $t$-structure and let $\F^\star\in\F\Cscr$ be
    a filtration in $\Cscr$. All graded pieces below are constructed with respect to $\F^\star$.
    For $r\geq 1$, and $s\in\bZ$, there is a map
    $$\gr^{[-s,-s+r)}\rightarrow\gr^{[-s-r+1,-s+1)}$$
    induced by the commutative square
    $$\xymatrix{
        \F^{-s+r}\ar[r]\ar[d]&\F^{-s}\ar[d]\\
        \F^{-s+1}\ar[r]&\F^{-s-r+1}
    }$$
    by taking horizontal cofibers. These fit into a natural commutative diagram
    \begin{equation}\label{eq:ss}\begin{gathered}
        \xymatrix{\gr^{[-s+r,-s+2r)}\ar[r]\ar[d]&\gr^{[-s,-s+2r)}\ar[r]\ar[d]&\gr^{[-s,-s+r)}\ar[d]\\
        \gr^{[-s+1,-s+r+1)}\ar[r]&\gr^{[-s-r+1,-s+r+1)}\ar[r]&\gr^{[-s-r+1,-s+1)}}
    \end{gathered}\end{equation}
    of cofiber sequences. For $r\geq 1$ and $s,t\in\bZ$, let
    $$\E^r_{s,t}=\im(\pi_{s+t}\gr^{[-s,-s+r)}\rightarrow\pi_{s+t}\gr^{[-s-r+1,-s+1)})$$
    and consider the commutative diagram
    $$\xymatrix{
    \pi_{s+t}\gr^{[-s,-s+r)}\ar[r]\ar[d]&\E^r_{s,t}\ar[r]\ar@{.>}[d]&\pi_{s+t}\gr^{[-s-r+1,-s+1)}\ar[d]\\
    \pi_{s+t-1}\gr^{[-s+r,-s+2r)}\ar[r]&\E^r_{s-r,t+r-1}\ar[r]&\pi_{s+t-1}\gr^{[-s+1,-s+r+1)}
    }$$
    where the rows are epi-mono factorizations and the left and right vertical maps come
    from the boundary maps in homotopy associated to the fiber sequences
    in~\eqref{eq:ss}.
    The claim is that the outer square above commutes. This follows immediately
    from the fact that there is a canonical map from the top fiber sequence above to the
    bottom fiber sequence.
    By functoriality of epi-mono factorization, the dotted arrow exists. Denote
    it by $\d^r_{s,t}$.
\end{construction}

\begin{definition}[Lurie's definition of the spectral sequence]\label{def:er}
    The bigraded collection $\E^r_{s,t}$ of objects of $\Cscr^\heart$
    together with the differentials
    $\d^r_{s,t}\colon\E^r_{s,t}\rightarrow\E^r_{s-r,t+r-1}$ is the $\E^r$-page of the spectral sequence associated to $\F^\star$.
\end{definition}

Parts (ii) and (iii) of the following proposition appears as~\cite[Prop.~1.2.2.7]{ha}.

\begin{proposition}\label{prop:lurie}
    Let $\F^\star M$ be a filtered object in a stable $\infty$-category equipped with a
    $t$-structure.
    \begin{enumerate}
        \item[{\em (i)}] The definitions of the $\E^1$-page given in
            Definitions~\ref{def:e1} and~\ref{def:er} agree.
        \item[{\em (ii)}] The composition $\d^r\circ\d^r$ vanishes for $r\geq 1$.
        \item[{\em (iii)}] The homology of the chain complexes on the
            $\E^r$-page is naturally isomorphic to the terms of the
            $\E^{r+1}$-page for $r\geq 1$.
    \end{enumerate}
    In particular, the sequence of pages of Definition~\ref{def:er} defines a spectral sequence in
    the sense of Definition~\ref{def:ss}.
\end{proposition}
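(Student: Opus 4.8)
The plan is to handle the three parts in increasing order of depth: part~(i) by direct inspection of Lurie's formula at $r=1$, and parts~(ii) and~(iii) by recognizing the bigraded package as the derived exact couple of the long exact sequences in $\pi_*$, for which~\cite[Prop.~1.2.2.7]{ha} already records the needed algebra.

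For part~(i), I would first set $r=1$ in $\E^r_{s,t}=\im(\pi_{s+t}\gr^{[-s,-s+r)}\to\pi_{s+t}\gr^{[-s-r+1,-s+1)})$. Both intervals collapse to $\{-s\}$, and the inducing square has identity vertical legs, so the map is the identity of $\pi_{s+t}\gr^{-s}$ and Lurie's $\E^1_{s,t}=\pi_{s+t}\gr^{-s}$ agrees with Definition~\ref{def:e1}. To match the differentials I would specialize diagram~\eqref{eq:ss} to $r=1$: both rows become the single fiber sequence $\gr^{-s+1}\to\F^{-s}/\F^{-s+2}\to\gr^{-s}$, which is exactly the fiber sequence defining the coherent differential $\delta\colon\gr^{-s}\to\gr^{-s+1}[1]$ (Lemma~\ref{lem:nullhomotopy}). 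Hence the connecting-map differential $\d^1_{s,t}$ is $\pi_{s+t}(\delta)\colon\pi_{s+t}\gr^{-s}\to\pi_{s+t-1}\gr^{-s+1}=\E^1_{s-1,t}$, i.e.\ the differential of the cochain complex $\pi_t^\B(\F)$ from Definition~\ref{def:e1}. This settles~(i).

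For parts~(ii) and~(iii), I would identify the image description with the standard $Z^r/B^r$ of an exact couple. Factoring the defining map as $\gr^{[-s,-s+r)}\twoheadrightarrow\gr^{-s}\hookrightarrow\gr^{[-s-r+1,-s+1)}$ (projection onto, then inclusion of, the top graded piece, via Construction~\ref{const:filtered_gradeds}), one sees that a class of $\pi_{s+t}\gr^{-s}$ represents an element of $\E^r_{s,t}$ exactly when it lifts to $\pi_{s+t}\gr^{[-s,-s+r)}$ (an $r$-cycle), modulo the image of the connecting map out of $\pi_{s+t+1}\gr^{[-s-r+1,-s)}$ (an $r$-boundary). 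These $\E^r_{s,t}$ together with the $\d^r$ of Definition~\ref{def:er} are precisely the objects and differentials of the $r$th derived exact couple of the exact couple assembled from the long exact sequences of $\F^{j}\to\F^{i}\to\gr^{[i,j)}$. Granting this, $\d^r\circ\d^r=0$ is the exact-couple relation $kj=0$ read off from exactness at the $E$-term, and the identification of $\ker\d^r/\im\d^r$ with $\E^{r+1}_{s,t}=\im(\pi_{s+t}\gr^{[-s,-s+r+1)}\to\pi_{s+t}\gr^{[-s-r,-s+1)})$ is the passage to the derived couple. I would cite~\cite[Prop.~1.2.2.7]{ha} for this bookkeeping rather than reproduce it.

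The main obstacle is the homology computation underlying~(iii): matching $\ker\d^r/\im\d^r$ with the length-$(r+1)$ image requires chasing the maps of~\eqref{eq:ss} and the interval gradeds of Construction~\ref{const:filtered_gradeds} through several long exact sequences, tracking how an $r$-cycle that survives becomes an $(r+1)$-cycle. The commutativity of the outer square that makes $\d^r$ well defined, via functoriality of epi--mono factorization, is already supplied in the construction preceding Definition~\ref{def:er}; everything remaining is exact-couple algebra in the abelian category $\Cscr^\heart$, which is exactly why deferring to~\cite{ha} is the efficient route.
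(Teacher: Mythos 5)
Your part (i) is correct and matches the paper's own (terse) argument, with the $r=1$ specialization of~\eqref{eq:ss} usefully written out: both rows collapse to $\gr^{-s+1}\rightarrow\gr^{[-s,-s+2)}\rightarrow\gr^{-s}$, so the connecting map is $\pi_{s+t}(\delta)$ and the differentials of Definitions~\ref{def:e1} and~\ref{def:er} agree. For parts (ii) and (iii), however, you take a genuinely different route from the paper. You identify Lurie's image-definition with the $r$th derived exact couple and then defer the verification to~\cite[Prop.~1.2.2.7]{ha}. This is legitimate as a proof --- the paper itself records that (ii) and (iii) appear as~\cite[Prop.~1.2.2.7]{ha}, and that proposition is stated directly for the image-definition of Definition~\ref{def:er} (up to translating between increasing/homological and decreasing indexing), so you could in fact cite it without the exact-couple interposition; as written, the derived-couple identification is scaffolding that you yourself flag as the ``main obstacle'' and never carry out, so your operative proof is the citation, not the chase. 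The paper deliberately avoids this route: it proves (ii) and (iii) as consequences of the comparison theorem (Theorem~\ref{thm:decalage}), whose long proof is carefully arranged not to use (ii) or (iii). Granting that theorem, $\E^{r+1}_{s,t}(\F)\iso\E^r_{-t,s+2t}(\Dec(\F))$ compatibly with differentials, and the d\'ecalage-defined pages form a spectral sequence because $\E^2_{s,t}(\F)\iso\H^{-s}(\pi^\B_t(\F)^\bullet)$ by Remark~\ref{rem:graded_decalage} and Lemma~\ref{lem:cohomology_of_complex}; so $\d^r\circ\d^r=0$ and the homology identification propagate inductively from the $r=1$ case. What each approach buys: yours is shorter and, via Lurie, covers an arbitrary stable $\infty$-category with a $t$-structure, exactly as the proposition is stated (the paper's d\'ecalage argument needs sequential limits and colimits in $\Cscr$, or the bounded-d\'ecalage workaround); the paper's route is self-contained relative to its main theorem, replaces the exact-couple diagram chase by structural arguments in the Beilinson $t$-structure, and yields the page isomorphisms in a form manifestly natural in $\F^\star$ and compatible with d\'ecalage, which is the whole point of the paper.
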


\begin{proof}[Proof of {\em (i)}]
    Part (i) follows by observing that in the second definition
    $\E^1_{s,t}=\pi_{s+t}\gr^{-s}$, just as in the first definition. The check
    that the differentials agree follows from examining the boundary maps
    associated to the fiber sequences in~\eqref{eq:ss}.
\end{proof}

The proof of part (iii) of Proposition~\ref{prop:lurie} is nontrivial. It will follow from
Theorem~\ref{thm:decalage} that Lurie's definition of the $\E^r$-pages agrees with the d\'ecalage
definition of the $\E^r$-page given here. Inductively, this implies parts (ii) and (iii) of
Proposition~\ref{prop:lurie}. In particular, the proof of Theorem~\ref{thm:decalage} does not use
parts (ii) or (iii) of Proposition~\ref{prop:lurie}.

Until the end of the section, unless specified otherwise, $\E^r_{s,t}(\F)$ refers to Lurie's
definition.

\begin{theorem}[Comparison theorem]\label{thm:decalage}
    Let $\Cscr$ be a stable $\infty$-category with sequential limits and colimits and a
    $t$-structure.
    For $r\geq 1$, there are natural isomorphisms $\E^r_{-t,s+2t}(\Dec(\F))\iso\E^{r+1}_{s,t}(\F)$
    compatible with the differentials on the $\E^r(\Dec(\F))$ and $\E^{r+1}(\F)$-pages.
\end{theorem}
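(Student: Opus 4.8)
The plan is to compute Lurie's image description of both sides directly and to exhibit a natural isomorphism, using the explicit description of the graded windows of the d\'ecalage. The essential starting observation is that realization $|-|\colon\F\Cscr\to\Cscr$ is exact, being a sequential colimit (equivalently a left adjoint) between stable $\infty$-categories, so every window of $\Dec(\F)^\star$ is the realization of the corresponding window of the Beilinson Whitehead tower:
$$\gr^{[a,b)}_{\Dec(\F)}\we\left|\cofib\big(\tau_{\geq b}^\B(\F)\to\tau_{\geq a}^\B(\F)\big)\right|.$$
Writing $\tau_{[a,b)}^\B(\F)$ for this cofiber, and substituting the reindexing $(s,t)\mapsto(-t,s+2t)$ into Definition~\ref{def:er}, the left-hand side of the theorem becomes $\im\big(\pi_{s+t}|\tau_{[t,t+r)}^\B(\F)|\to\pi_{s+t}|\tau_{[t-r+1,t+1)}^\B(\F)|\big)$, while the right-hand side is $\im\big(\pi_{s+t}\gr^{[-s,-s+r+1)}_\F\to\pi_{s+t}\gr^{[-s-r,-s+1)}_\F\big)$. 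Thus everything reduces to understanding the homotopy objects of realized Beilinson windows together with the maps induced by inclusions of windows of adjacent position and length.

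The key technical step, and the point I expect to be the main obstacle, is the computation of $\pi_*|\tau_{[a,b)}^\B(\F)|$ in terms of the $\E^1$-complex $\pi_\cs\gr^\cs_\F$. This generalizes Lemma~\ref{lem:cohomology_of_complex} together with Remark~\ref{rem:graded_decalage}, which handle the length-one case: there $\tau_{[a,a+1)}^\B(\F)\we\pi_a^\B(\F)[a]$ and $\pi_m|\pi_a^\B(\F)[a]|\iso\H^{a-m}(\pi_a^\B(\F)^\cs)$, the cohomology of the $\E^1$-complex. I would proceed by induction on the window length $b-a$, using the Beilinson fiber sequences $\tau_{\geq a+1}^\B(\F)\to\tau_{\geq a}^\B(\F)\to\pi_a^\B(\F)[a]$ to present $\tau_{[a,b)}^\B(\F)$ as an iterated extension, realizing (again exactness of $|-|$), and analyzing the resulting long exact sequences. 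The delicate feature is that the Beilinson connective covers are \emph{not} computed graded-piece by graded-piece, so the realization genuinely mixes the graded pieces of $\F$; the truncation bookkeeping in the proof of Lemma~\ref{lem:cohomology_of_complex}, playing $\tau_{\geq n}$ and $\tau_{\leq n}$ of windows against completeness and cocompleteness, is exactly what is needed to control these extensions and, crucially, to identify the maps induced by inclusions of Beilinson windows with the connecting maps of the original filtration $\F$. This is where the geometric content lives: Beilinson truncation pre-processes $\F$ so that its length-$r$ windows record what was spread over length-$(r+1)$ windows of $\F$.

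With this computation in hand, the remaining identification of objects is a diagram chase. Under the identification above, the inclusion and projection maps relating Beilinson windows of length $r$ translate into the length-$(r+1)$ window maps of $\F$ appearing in Lurie's formula for $\E^{r+1}_{s,t}(\F)$, and comparing the two epi--mono factorizations identifies the images, yielding the natural isomorphism of objects. For the differentials, I would observe that Lurie's $\d^r$ on $\E^r(\Dec(\F))$ and $\d^{r+1}$ on $\E^{r+1}(\F)$ are both boundary maps attached to the respective window cofiber sequences as in~\eqref{eq:ss}; the comparison of cofiber-sequence diagrams produced in the homotopy computation is natural, so it transports one boundary map to the other.

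Finally I would record the bookkeeping on bidegrees to confirm compatibility with the statement. Lurie's differential has bidegree $(-r,r-1)$ on $\E^r(\Dec(\F))$ and $(-r-1,r)$ on $\E^{r+1}(\F)$, and one checks that the substitution $(s,t)\mapsto(-t,s+2t)$ sends the target position $(s-r-1,t+r)$ of $\d^{r+1}_{s,t}(\F)$ to exactly the target position of $\d^r_{-t,s+2t}(\Dec(\F))$, so the isomorphism of objects is indeed an isomorphism of pages. By induction the length-one case is the base, supplied by the Lemma following Definition~\ref{def:ss_decalage}; the content of the general case is entirely the windowed homotopy computation of the second paragraph.
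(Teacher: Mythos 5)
Your reduction is set up correctly (the exactness of realization, the identification $\gr^{[a,b)}_{\Dec(\F)}\we|\cofib(\tau^\B_{\geq b}(\F)\to\tau^\B_{\geq a}(\F))|$, and the bidegree bookkeeping all check out), but there is a genuine gap at the step you yourself flag as the main obstacle, and the plan you offer for it cannot work as stated. The two images being compared are images of maps between \emph{non-isomorphic} objects: $\pi_{s+t}\gr^{[t,t+r)}_\Dec$ is not isomorphic to $\pi_{s+t}\gr^{[-s,-s+r+1)}_\F$ in general (only the images agree --- that is the theorem), and there is no direct map in either direction between the realized Beilinson window and the corresponding $\F$-window. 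So ``computing $\pi_*|\tau^\B_{[a,b)}(\F)|$ in terms of the $\E^1$-complex'' and then ``translating'' the Beilinson window inclusions into the window maps of $\F$ has no mechanism behind it; indeed such a computation is impossible in the literal sense, since for $b-a=r$ these homotopy objects encode $r$-th--page data (cycle/boundary subquotients depending on $\d^2,\dots,\d^{r-1}$), not data of the $\E^1$-page, so your induction on window length would have to track exactly the extension information whose identification is the content of the theorem. What is missing is a class of \emph{bridge objects} admitting maps to both sides. The paper's proof supplies these by applying d\'ecalage to the windows $\gr^{[c,d)}_\F$ themselves, regarded as filtered objects via their residual filtration (Construction~\ref{const:filtered_gradeds}), producing mixed terms $\gr^{[a,b)}_\Dec\gr^{[c,d)}_\F$; the crucial enabling input is that d\'ecalage carries the window fiber sequences $\gr^{[b,c)}\to\gr^{[a,c)}\to\gr^{[a,b)}$ to fiber sequences (Lemma~\ref{lem:truncation_exact} and Example~\ref{ex:short_exact}), which your proposal never uses. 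One then gets a five-term zigzag
$$\pi_0\gr^{[0,r+1)}_\F\leftarrow\pi_0\gr^{[0,\infty)}_\Dec\gr^{[0,r+1)}_\F\rightarrow\pi_0\gr^{[0,r)}_\Dec\gr^{[0,r+1)}_\F\leftarrow\pi_0\gr^{[0,r)}_\Dec\gr^{[0,\infty)}_\F\rightarrow\pi_0\gr^{[0,r)}_\Dec$$
(and its companion row), in which each wrong-way arrow is shown to be an isomorphism and the remaining arrows suitably epi or mono by (co)connectivity estimates on the graded pieces $|\pi^\B_a(\gr^{[c,d)}_\F)[a]|$ via Lemma~\ref{lem:cohomology_of_complex}; functoriality of epi--mono factorization then matches the two images. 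This zigzag is the idea your proof is missing.

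Relatedly, your treatment of the differentials is too optimistic: it is not mere naturality of boundary maps, because the window shifts do not align --- $\d^{r+1}_\F$ connects $\F$-windows offset by $r+1$ while $\d^r_\Dec$ connects $\Dec$-windows offset by $r$, so the relevant cofiber-sequence diagrams do not map to one another termwise. The paper has to interpolate with an auxiliary object $P$ (defined so that a $3\times 5$-diagram has exact columns) and prove an injectivity statement (the map $\mathbf{B}$ in the proof) before the two boundary maps can be compared; some argument of this kind is unavoidable and is absent from your sketch.
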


\begin{corollary}\label{cor:decalage}
    Let $\Cscr$ be a stable $\infty$-category with sequential limits and colimits and a
    $t$-structure. For $r\geq 0$, there are natural isomorphisms
    $\E^1_{-(r-1)s-rt,rs+(r+1)t}(\Dec^{(r)}(\F))\iso\E^{r+1}_{s,t}(\F)$ compatible with the
    differentials on the $\E^1(\Dec^{(r)}(\F))$ and $\E^{r+1}(\F)$-pages.
\end{corollary}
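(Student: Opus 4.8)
The plan is to deduce the corollary from the one-step Comparison Theorem~\ref{thm:decalage} by induction on $r$, applied each time to a fresh filtration; the only real content is the bookkeeping of how the reindexings compose. I would first record the base case $r=0$, where $\Dec^{(0)}(\F)=\F$ and the prescribed transformation $(s,t)\mapsto(-(r-1)s-rt,\,rs+(r+1)t)$ degenerates to the identity, so the claimed isomorphism $\E^1_{s,t}(\F)\iso\E^1_{s,t}(\F)$ holds trivially.

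For the inductive step I would read Theorem~\ref{thm:decalage} as saying that the linear reindexing with matrix $A=\begin{pmatrix}0&-1\\1&2\end{pmatrix}$, i.e.\ $(s,t)\mapsto(-t,s+2t)$, furnishes a differential-compatible natural isomorphism $\E^{r+1}_{s,t}(\F)\iso\E^r_{A(s,t)}(\Dec(\F))$ for every $r\geq 1$. Crucially, the corollary at level $r-1$ is a statement about \emph{arbitrary} filtrations in $\Cscr$, so I may apply it to the filtration $\Dec(\F)$. Combining the two and using $\Dec^{(r-1)}(\Dec(\F))\we\Dec^{(r)}(\F)$ gives
$$\E^{r+1}_{s,t}(\F)\iso\E^r_{A(s,t)}(\Dec(\F))\iso\E^1_{A^{r-1}(A(s,t))}(\Dec^{(r)}(\F))=\E^1_{A^r(s,t)}(\Dec^{(r)}(\F)).$$

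It then remains only to identify $A^r(s,t)$ with the closed form in the statement. Writing $A=I+N$ with $N=\begin{pmatrix}-1&-1\\1&1\end{pmatrix}$, one checks directly that $N^2=0$, so the binomial expansion truncates to $A^r=I+rN=\begin{pmatrix}1-r&-r\\r&1+r\end{pmatrix}$; reading off the coordinates of $A^r(s,t)$ yields precisely $(-(r-1)s-rt,\,rs+(r+1)t)$. Compatibility with the differentials needs no separate argument, since a composite of differential-compatible isomorphisms is again one. I anticipate no genuine obstacle: the entire substance of the result lives in Theorem~\ref{thm:decalage}, and the sole point demanding care is the nilpotent-perturbation computation $A^r=I+rN$ that reconciles the iterated one-step reindexing with the stated matrix $\begin{pmatrix}-r+1&-r\\r&r+1\end{pmatrix}$.
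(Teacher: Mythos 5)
Your proposal is correct and matches the paper's intent exactly: the corollary is stated there as an immediate consequence of Theorem~\ref{thm:decalage}, obtained precisely by iterating the one-step isomorphism and composing the reindexings, with your computation $A^r=I+rN=\begin{pmatrix}1-r&-r\\r&1+r\end{pmatrix}$ agreeing with the matrix $\begin{pmatrix}-r+1&-r\\r&r+1\end{pmatrix}$ (and its inverse) already recorded in the paper's lemma following Definition~\ref{def:ss_decalage}. The base case, the application of the inductive hypothesis to the fresh filtration $\Dec(\F)$, and the observation that naturality and differential-compatibility are preserved under composition are all handled correctly.
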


\begin{remark}
    In particular, the d\'ecalage definition of the spectral sequence of a filtration agrees with
    Lurie's.
\end{remark}

\begin{remark}[Bounded d\'ecalage]
    The main situation we foresee where the hypotheses of Theorem~\ref{thm:decalage} are not met is
    when $\Cscr$ is a small abelian category admitting a bounded $t$-structure, such as $\D^b(X)$
    for a regular noetherian scheme. However, as in Remark~\ref{rem:bounded}, we can embed $\Cscr$
    inside $\Ind(\Cscr)$ and apply the theory of the Beilinson $t$-structure and d\'ecalage to
    $\F\Ind(\Cscr)$. As the $\Dec$ endofunctor preserves $\F^b\Cscr\subseteq\F\Ind(\Cscr)$, we see that
    Theorem~\ref{thm:decalage} also applies to $\F^b\Cscr$.
\end{remark}

The proof is given at the end of the section after some preliminaries.

\begin{example}
    If $\F^\star M$ is the constant filtration on $M$, then each $\tau_{\geq
    n}^\B(\F^\star M)$ is the constant filtration on $M$ and $\Dec(\F)^\star M$
    is the constant filtration on $M$.
\end{example}

\begin{lemma}\label{lem:truncationscomplete}
    If $\F^\star M$ is a complete filtration, then each $\tau_{\geq
    n}^\B(\F^\star M)$ is complete.
\end{lemma}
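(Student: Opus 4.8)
The plan is to run the truncation fiber sequence for the Beilinson $t$-structure through the limit functor that detects completeness, using the fact recorded in Definition~\ref{def:noncompletebeilinson} that bounded-above objects in the Beilinson $t$-structure are automatically complete. The one structural input I need is that completeness is an exact condition: a filtration $\G^\star$ is complete exactly when $\G^\infty=\lim_s\G^s\we 0$, and the functor $\F\Cscr\rightarrow\Cscr$ sending $\G^\star\mapsto\G^\infty$ is a limit (right adjoint to the constant-filtration functor), so it preserves fiber sequences. In particular, suspension in $\F\Cscr$ is computed pointwise and commutes with $\lim_s$, so shifts preserve completeness.

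With that in hand, fix $n$ and suppose $\F^\star M$ is complete. I would consider the truncation fiber sequence for the Beilinson $t$-structure,
$$\tau_{\geq n}^\B(\F)\rightarrow\F^\star\rightarrow\tau_{\leq n-1}^\B(\F),$$
and apply the limit functor $(-)^\infty$ to obtain a fiber sequence $(\tau_{\geq n}^\B\F)^\infty\rightarrow\F^\infty\rightarrow(\tau_{\leq n-1}^\B\F)^\infty$ in $\Cscr$. By hypothesis $\F^\infty\we 0$. Moreover $\tau_{\leq n-1}^\B(\F)$ lies in $(\F\Cscr)_{\leq n-1}^\B\we(\widehat{\F}\Cscr)_{\leq n-1}^\B$ by the identity $(\F\Cscr)_{\leq 0}^\B\we(\widehat{\F}\Cscr)_{\leq 0}^\B$ of Definition~\ref{def:noncompletebeilinson} together with the shift-invariance of completeness, so $\tau_{\leq n-1}^\B(\F)$ is a complete filtration and $(\tau_{\leq n-1}^\B\F)^\infty\we 0$ as well. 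Therefore $(\tau_{\geq n}^\B\F)^\infty$ is the fiber of a map $0\rightarrow 0$, hence vanishes, which is precisely the statement that $\tau_{\geq n}^\B(\F)$ is complete.

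The argument is essentially immediate once these two inputs are assembled, and there is no serious calculation to carry out. The only place requiring care — and the closest thing to an obstacle — is the assertion that coconnective objects of the Beilinson $t$-structure are already complete, i.e. the identity $(\F\Cscr)_{\leq 0}^\B\we(\widehat{\F}\Cscr)_{\leq 0}^\B$; this is exactly what lets completeness of the two outer terms propagate to the connective cover across the truncation fiber sequence. Given that identity, exactness of $(-)^\infty$ finishes the proof.
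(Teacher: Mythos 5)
Your argument is correct and is essentially the paper's own proof: both apply the exact limit functor $\G^\star\mapsto\G^\infty$ to the truncation fiber sequence $\tau_{\geq n}^\B(\F)\rightarrow\F^\star\rightarrow\tau_{\leq n-1}^\B(\F)$ and invoke the fact from Definition~\ref{def:noncompletebeilinson} that Beilinson-coconnective objects are automatically complete. The paper merely states this more tersely; your explicit remarks on exactness of $(-)^\infty$ and shift-invariance fill in the same steps it leaves implicit.
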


\begin{proof}
    There is a cofiber sequence $$\tau_{\geq n}^\B(\F^\star
    M)\rightarrow\F^\star M\rightarrow\tau_{\leq n-1}^\B(\F^\star M)$$ of
    filtered objects of $\Cscr$. As $\tau_{\leq n-1}^\B(\F^\star M)$ is
    necessarily complete (see Definition~\ref{def:noncompletebeilinson}), it follows that $\tau_{\geq n}^\B(\F^\star M)$ is
    complete.
\end{proof}

\begin{remark}\label{rem:no_short_exact}
    If $\F^\star M'\rightarrow\F^\star M\rightarrow\F^\star M''$ is a fiber sequence of fibrations,
    so that each $\F^sM'\rightarrow\F^s M\rightarrow\F^s M''$ is a fiber sequence, there is a
    long-exact sequence $$\cdots\rightarrow\pi_{n+1}^\B(\F^\star M'')\rightarrow\pi_{n}^\B(\F^\star
    M')\rightarrow\pi_{n}^\B(\F^\star M)\rightarrow\pi_{n}^\B(\F^\star M'')\rightarrow\cdots$$ 
    of cochain complexes. It does not typically break up into short exact sequences, which explains
    why one does not often find ``short exact sequences of spectral sequences'' in nature.
\end{remark}

There is an important class of examples which run counter to the general situation of
Remark~\ref{rem:no_short_exact} and which will be
important in the proof of Theorem~\ref{thm:decalage}. First, we need a standard lemma.

\begin{lemma}\label{lem:truncation_exact}
    Let $\Cscr$ be a stable $\infty$-category with a $t$-structure $(\Cscr_{\geq 0},\Cscr_{\leq
    0})$. Suppose that $x\rightarrow y\rightarrow z$ is a cofiber sequence in $\Cscr$.
    \begin{enumerate}
        \item[{\em (i)}] If
        $\pi_0y\rightarrow\pi_0z$ is an epimorphism in $\Cscr^\heart$, then $\tau_{\geq
            0}x\rightarrow\tau_{\geq 0}y\rightarrow\tau_{\geq 0}z$ is a cofiber sequence in $\Cscr$.
        \item[{\em (ii)}] If $\pi_0x\rightarrow\pi_0y$ is a monomorphism in $\Cscr^\heart$, then
            $\tau_{\leq 0}x\rightarrow\tau_{\leq 0}y\rightarrow\tau_{\leq 0}z$ is a cofiber sequence
            in $\Cscr$.
    \end{enumerate}
\end{lemma}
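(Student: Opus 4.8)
\emph{Reduction.} First I would observe that (i) and (ii) are interchanged under passing to the opposite $\infty$-category $\Cscr^{\op}$, equipped with the $t$-structure $((\Cscr_{\leq 0})^{\op},(\Cscr_{\geq 0})^{\op})$: this exchanges $\tau_{\geq 0}$ with $\tau_{\leq 0}$, carries cofiber sequences to cofiber sequences, and swaps monomorphisms with epimorphisms in the heart. So it is enough to prove (i), and (ii) then follows formally.

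\emph{Construction.} For (i) the plan is to splice the truncation sequences for $x$ and $y$ with the given cofiber sequence into a $3\times 3$ diagram of cofiber sequences, with first two columns $\tau_{\geq 0}x\to x\to\tau_{\leq -1}x$ and $\tau_{\geq 0}y\to y\to\tau_{\leq -1}y$ and middle row $x\to y\to z$. Completing the diagram produces a cofiber sequence $W\to z\to C$ in the third column, where $W\we\cofib(\tau_{\geq 0}x\to\tau_{\geq 0}y)$ and $C\we\cofib(\tau_{\leq -1}x\to\tau_{\leq -1}y)$. Since $\Cscr_{\geq 0}$ is closed under cofibers, $W\in\Cscr_{\geq 0}$; and since the cofiber of a map between $(\leq -1)$-truncated objects is $(\leq 0)$-truncated, $C\in\Cscr_{\leq 0}$. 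I would then aim to identify $W\to z$ with the canonical map $\tau_{\geq 0}z\to z$, which by uniqueness of the truncation triangle (orthogonality (b) and existence (c) in the definition of the $t$-structure) amounts to upgrading $C\in\Cscr_{\leq 0}$ to $C\in\Cscr_{\leq -1}$, i.e.\ to showing $\pi_0 C\we 0$.

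\emph{Where the hypothesis enters, and the obstacle.} The vanishing of $\pi_0 C$ is exactly the content of the hypothesis. From the long exact sequence of $\tau_{\leq -1}x\to\tau_{\leq -1}y\to C$ one reads off $\pi_0 C\iso\ker(\pi_{-1}x\to\pi_{-1}y)$, while the long exact sequence of $x\to y\to z$ contains $\pi_0 y\to\pi_0 z\xrightarrow{\partial}\pi_{-1}x\to\pi_{-1}y$. Here $\pi_0 y\to\pi_0 z$ is an epimorphism iff $\partial=0$ iff, by exactness, $\pi_{-1}x\to\pi_{-1}y$ is a monomorphism, i.e.\ iff $\pi_0 C\we 0$. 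Thus the hypothesis gives $C\in\Cscr_{\leq -1}$ and hence $W\we\tau_{\geq 0}z$. I expect the only genuinely fiddly step to be checking that the second map of the resulting sequence really is the functorial $\tau_{\geq 0}y\to\tau_{\geq 0}z$, rather than merely some equivalence; for this I would use the adjunction equivalence $\Map(\tau_{\geq 0}y,z)\we\Map(\tau_{\geq 0}y,\tau_{\geq 0}z)$, valid since $\tau_{\geq 0}y$ is connective and $\tau_{\geq 0}$ is right adjoint to the inclusion of $\Cscr_{\geq 0}$, under which the composite $\tau_{\geq 0}y\to W\we\tau_{\geq 0}z\to z$ corresponds to the canonical truncation map. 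The connectivity bookkeeping and the translation of the hypothesis are routine once the $3\times 3$ diagram is set up; a purely computational alternative, comparing the two long exact sequences term by term via the five lemma on $\pi_n$ for all $n$, would also work but is less transparent.
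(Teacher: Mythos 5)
Your proof is correct, but it takes a genuinely different route from the paper's. You build the $3\times 3$ diagram on the map of truncation triangles $(\tau_{\geq 0}x\to x\to\tau_{\leq -1}x)\to(\tau_{\geq 0}y\to y\to\tau_{\leq -1}y)$, observe that the third column $W\to z\to C$ has $W\in\Cscr_{\geq 0}$ and $C\in\Cscr_{\leq 0}$, and use the hypothesis---translated through the long exact sequence into $\pi_0C\iso\ker(\pi_{-1}x\to\pi_{-1}y)\we 0$---to promote $C$ to $\Cscr_{\leq -1}$, so that uniqueness of the $t$-structure decomposition of $z$ identifies $W\we\tau_{\geq 0}z$; your adjunction argument correctly pins down that the resulting map $\tau_{\geq 0}y\to W\we\tau_{\geq 0}z$ is the functorial one. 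The paper instead works entirely inside the prestable $\infty$-category $\Cscr_{\geq 0}$: it lifts $\tau_{\geq 0}z\to(\tau_{\geq 0}x)[1]$ through the connecting map (the hypothesis enters as the vanishing of the boundary $\pi_0z\to\pi_{-1}x$, the same exactness translation you use), forms the fiber $w$ of this lift in $\Cscr_{\geq 0}$, and then proves an auxiliary claim that a map in $\Cscr_{\geq 0}$ with trivial fiber and surjective $\pi_0$ is an equivalence, to identify $w\we\tau_{\geq 0}y$. Your version buys a cleaner conclusion: it avoids both the prestability input and that auxiliary equivalence lemma, resting only on the $\infty$-categorical $3\times 3$ lemma and the orthogonality-based uniqueness of truncation triangles (which the paper records in its remark on $t$-structures), and it makes explicit the opposite-category duality behind ``it suffices to prove (i),'' which the paper leaves implicit. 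One caveat: your closing aside that one ``could also'' compare the two long exact sequences via the five lemma on all $\pi_n$ is not sound in this generality---a map inducing isomorphisms on all homotopy objects need not be an equivalence unless the $t$-structure is separated, and one would still need to construct the comparison map first---so that remark should be dropped; it plays no role in your actual argument, which stands as is.
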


\begin{proof}
    It suffices to prove part (i). Consider the induced map $z\rightarrow x[1]$ and note that the
    composition $\tau_{\geq 0}z\rightarrow \tau_{\geq 0}x[1]\rightarrow\pi_{-1}x$ factors through the boundary map
    $\pi_0z\rightarrow\pi_{-1}x$, which is assumed to be zero. Thus, there is a lift
    $\tau_{\geq 0}z\rightarrow(\tau_{\geq 0}x)[1]$. As $\Cscr_{\geq 0}$ is prestable, there 
    is by definition a fiber $w$ of this map in $\Cscr_{\geq 0}$ such that the square
    $$\xymatrix{
        w\ar[r]\ar[d]&\tau_{\geq 0}z\ar[d]\\
        0\ar[r]&(\tau_{\geq 0}x)[1]
    }$$ is a fiber square and a cofiber square in $\Cscr_{\geq 0}$.
    As $\tau_{\geq 0}$ preserves fiber squares, we see that there is a map $w\rightarrow\tau_{\geq
    0}y$ in $\Cscr_{\geq 0}$ with trivial fiber and which induces an isomorphism on $\pi_0$. If this map is an equivalence, then $\tau_{\geq
    0}y\rightarrow\tau_{\geq 0}z\rightarrow(\tau_{\geq 0}x)[1]$ is a cofiber sequence in $\Cscr$,
    proving (i). Now, suppose that $a\rightarrow b$ is a map in $\Cscr_{\geq 0}$ which has trivial
    fiber and induces a surjection on $\pi_0$. Let $f$ be the fiber of $a\rightarrow b$ in
    $\Cscr$, so that $\tau_{\geq 0}f\we 0$ is the fiber computed in $\Cscr_{\geq 0}$.
    We have a fiber sequence $a\rightarrow b\rightarrow f[1]$ and hence a fiber sequence
    $a\rightarrow b\rightarrow\tau_{\geq 0}(f[1])$ in $\Cscr_{\geq 0}$. As $\tau_{\geq
    0}(f[1])\iso\pi_{-1}f$ and as $\pi_0a\rightarrow\pi_0b$ is surjective, with have that
    $b\rightarrow\tau_{\geq 0}(f[1])$ is zero, so that $a\we b\oplus\Omega(\pi_{-1}f)\we b$, as
    desired.
\end{proof}

\begin{example}\label{ex:short_exact}
    Suppose that $\F^\star$ is a filtered object in $\Cscr$ and consider the fiber sequence
    $\gr^{[b,c)}\rightarrow\gr^{[a,c)}\rightarrow\gr^{[a,b)}$ of filtered objects using the
    filtrations of Construction~\ref{const:filtered_gradeds} for $a\leq b\leq c$ (including infinite values). Then,
    $$0\rightarrow\pi_0^\B(\gr^{[b,c)})\rightarrow\pi_0^\B(\gr^{[a,c)})\rightarrow\pi_0^\B(\gr^{[a,b)})\rightarrow 0$$
    is short exact. Indeed, the middle term is the cochain complex
    $$\cdots\rightarrow 0\rightarrow\pi_{-a}\gr^a\rightarrow\pi_{-a-1}\gr^{a+1}\rightarrow\cdots\pi_{-c+1}\gr^{c-1}\rightarrow
    0\rightarrow\cdots,$$
    and the left-hand term is the inclusion of the stupid truncation $\sigma^{\geq b}$ with
    quotient the right-hand term the stupid truncation $\sigma^{\leq b-1}$.
    It follows by Lemma~\ref{lem:truncation_exact} that $\Dec(\gr^{[b,c)})\rightarrow\Dec(\gr^{[a,c)})\rightarrow\Dec(\gr^{[a,b)})$
    is in fact a fiber sequence of filtered spectra.
\end{example}

Recall that $\ins^sM$ denotes the filtration on $M$ obtained by left Kan extension of the constant
functor on $M$ along the inclusion $\{s\}\hookrightarrow\bZ^\op$. Concretely, $\F^i\ins^s M\we 0$
for $i>s$ and $\F^i\ins^s M\we M$ for $i\leq s$, with all transition maps the identity. The
filtration on $\gr^a$ of Construction~\ref{const:filtered_gradeds} is naturally equivalent
to $\ins^a\gr^a$.

\begin{lemma}\label{lem:ins_t_exact}
    Let $\Cscr$ be a stable $\infty$-category with sequential limits and with a $t$-structure
    $(\Cscr_{\geq 0},\Cscr_{\leq 0})$.
    The functor $\ins^s\colon\Cscr\rightarrow\F\Cscr$ is $t$-exact with respect to the
    $t$-structure $(\Cscr_{\geq
    -s},\Cscr_{\leq -s})$ on $\Cscr$ and the Beilinson
    $t$-structure associated to $(\Cscr_{\geq 0},\Cscr_{\leq 0})$ on $\F\Cscr$.
\end{lemma}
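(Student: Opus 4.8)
The plan is to reduce the statement to the explicit description of the associated graded of $\ins^s M$ together with the defining conditions of the Beilinson $t$-structure. First I would record that $\ins^s$ is exact: since fiber and cofiber sequences in $\F\Cscr=\Fun(\bZ^\op,\Cscr)$ are computed pointwise and $\F^i\ins^s M\we M$ or $0$ at each $i$, the functor $\ins^s$ preserves all finite (co)limits, so it makes sense to ask whether it is $t$-exact. Next I would compute $\gr^n(\ins^s M)=\cofib(\F^{n+1}\ins^s M\rightarrow\F^n\ins^s M)$ directly from the formula $\F^i\ins^s M\we 0$ for $i>s$ and $\F^i\ins^s M\we M$ (with identity transition maps) for $i\leq s$. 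Every transition map other than $\F^{s+1}\ins^s M\rightarrow\F^s\ins^s M$ is either an identity or a map of zero objects, so $\gr^n(\ins^s M)\we M$ when $n=s$ and $\gr^n(\ins^s M)\we 0$ otherwise. This matches the identification of the residual filtration on $\gr^a$ with $\ins^a\gr^a$ recorded just before the lemma.

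Before reading off connectivity I would check that $\ins^s M$ is complete for every $M$. Because $\F^i\ins^s M\we 0$ for all $i>s$, the cofinal tail of the tower computing $\F^\infty=\lim_i\F^i\ins^s M$ is identically zero, so $\F^\infty\we 0$ and $\ins^s M\in\widehat{\F}\Cscr$. In particular the completeness requirement that is built into the coconnective part of the Beilinson $t$-structure (see Definition~\ref{def:noncompletebeilinson}, where $(\F\Cscr)^\B_{\leq 0}\we(\widehat{\F}\Cscr)^\B_{\leq 0}$) is automatic here, whereas connectivity imposes no completeness condition at all.

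With the graded pieces in hand the two $t$-exactness statements are immediate from the definitions. By Definition~\ref{def:noncompletebeilinson}, $\ins^s M\in(\F\Cscr)^\B_{\geq 0}$ exactly when $\gr^n(\ins^s M)\in\Cscr_{\geq -n}$ for all $n$; since the only nonzero graded piece sits in weight $n=s$, this collapses to $M\we\gr^s(\ins^s M)\in\Cscr_{\geq -s}$. Dually, using that $\ins^s M$ is automatically complete, $\ins^s M\in(\F\Cscr)^\B_{\leq 0}$ exactly when $\gr^n(\ins^s M)\in\Cscr_{\leq -n}$ for all $n$, which collapses to $M\in\Cscr_{\leq -s}$. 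Thus $\ins^s$ carries $\Cscr_{\geq -s}$ into $(\F\Cscr)^\B_{\geq 0}$ and $\Cscr_{\leq -s}$ into $(\F\Cscr)^\B_{\leq 0}$ (in fact both are biconditionals, though only the two implications are needed), which is precisely $t$-exactness for the stated $t$-structures. There is no serious obstacle: the only points demanding care are the bookkeeping of the shift—namely that the single nonzero graded piece lies in weight $s$, so the Beilinson bound there reads $\Cscr_{\geq -s}$ rather than $\Cscr_{\geq 0}$—and the asymmetry that connectivity in the incomplete Beilinson $t$-structure does not require completeness while coconnectivity does, which the completeness computation above settles.
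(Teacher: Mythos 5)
Your proposal is correct and is essentially the paper's argument: the paper's proof observes that $\ins^s$ lands in $\widehat{\F}\Cscr$ and that under the equivalence of Theorem~\ref{thm:ariotta} the object $\ins^sM$ is the coherent cochain complex with $M[s]$ in degree $s$ and $0$ elsewhere, for which $t$-exactness is manifest in the pointwise $t$-structure. Your direct computation of the graded pieces ($\gr^n(\ins^sM)\we M$ for $n=s$, $0$ otherwise) together with the completeness check is exactly this argument with the cochain-complex dictionary unwound, including the correct attention to the asymmetry in Definition~\ref{def:noncompletebeilinson} between the connective and coconnective parts.
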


\begin{proof}
    The functor $\ins^s$ lands in $\widehat{\F}\Cscr$, so it is enough to see it is $t$-exact as a functor
    to $\widehat{\F}\Cscr$ by Definition~\ref{def:noncompletebeilinson}. As a coherent cochain complex,
    $\ins^sM$ is simply $M[s]$ in degree $s$
    and $0$ elsewhere. This is a $t$-exact assignment with the given $t$-structures.
\end{proof}

\begin{example}
    The functor $\ins^0\colon\Cscr\rightarrow\F\Cscr$ is $t$-exact.
\end{example}

\begin{lemma}\label{lem:ins_decalage}
    The filtration $\Dec^\star(\gr^sM)$ is equivalent to $\tau_{\geq -s+\star}\gr^s M$.
\end{lemma}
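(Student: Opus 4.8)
The plan is to reduce the entire computation to the $t$-exactness of $\ins^s$ recorded in Lemma~\ref{lem:ins_t_exact}. First I would recall that, as observed just before that lemma, the residual filtration on $\gr^s M$ coming from Construction~\ref{const:filtered_gradeds} is canonically $\ins^s\gr^s M$, the filtration concentrated in weight $s$. So the filtered object $\gr^s M$ appearing in the statement is $\ins^s\gr^s M$, and it suffices to compute $\Dec(\ins^s\gr^s M)$. By the definition of d\'ecalage in Construction~\ref{const:decalage}, its $n$th term is $\Dec(\ins^s\gr^s M)^n\we|\tau_{\geq n}^\B(\ins^s\gr^s M)|$ for each $n$, so the whole problem is to understand how $\tau_{\geq n}^\B$ and $|-|$ interact with $\ins^s$.

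The key step is to commute $\tau_{\geq n}^\B$ past $\ins^s$. The functor $\ins^s$ is exact, since its values are pointwise either $0$ or the input and it therefore carries fiber sequences to fiber sequences; and by Lemma~\ref{lem:ins_t_exact} it is $t$-exact from the $t$-structure $(\Cscr_{\geq -s},\Cscr_{\leq -s})$ to the Beilinson $t$-structure. An exact, $t$-exact functor commutes with connective covers: applying $\ins^s$ to the defining fiber sequence of the $n$-connective cover of $X$ in the source $t$-structure yields a fiber sequence whose two outer terms are, respectively, $n$-connective and $(n-1)$-truncated for the Beilinson $t$-structure, and this fiber sequence therefore exhibits the Beilinson connective cover of $\ins^s X$. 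Since the $n$-connective objects of $(\Cscr_{\geq -s},\Cscr_{\leq -s})$ are $\Cscr_{\geq -s}[n]=\Cscr_{\geq -s+n}$, the relevant source connective cover is $\tau_{\geq -s+n}$ in the standard $t$-structure. Hence $\tau_{\geq n}^\B(\ins^s\gr^s M)\we\ins^s(\tau_{\geq -s+n}\gr^s M)$.

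Finally I would apply realization. For any object $X$ the filtration $\ins^s X$ is eventually constant with value $X$, so its colimit satisfies $|\ins^s X|\we X$; combining this with the previous display gives $\Dec(\ins^s\gr^s M)^n\we\tau_{\geq -s+n}\gr^s M$, which is precisely the $n$th term of $\tau_{\geq -s+\star}\gr^s M$. To upgrade these termwise equivalences to an equivalence of filtered objects I would check naturality in $n$: the Whitehead-tower transition map $\tau_{\geq n}^\B\to\tau_{\geq n-1}^\B$ is carried by the above equivalences to the connective-cover map $\tau_{\geq -s+n}\gr^s M\to\tau_{\geq -s+n-1}\gr^s M$, i.e.\ the transition map of the shifted Whitehead tower, so assembling over all $n$ produces the desired equivalence $\Dec^\star(\gr^s M)\we\tau_{\geq -s+\star}\gr^s M$. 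The only point requiring genuine care is the bookkeeping: correctly tracking the index shift when passing between $(\Cscr_{\geq -s},\Cscr_{\leq -s})$ and the standard $t$-structure, and ensuring the commutation of $\tau_{\geq n}^\B$ with $\ins^s$ is natural enough in $n$ to promote the termwise statements to an equivalence of towers.
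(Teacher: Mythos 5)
Your proof is correct and follows essentially the same route as the paper's: the paper likewise identifies the filtration on $\gr^sM$ with $\ins^s\gr^sM$, invokes the $t$-exactness of $\ins^s$ (Lemma~\ref{lem:ins_t_exact}) to commute $\tau_{\geq\star}^\B$ past $\ins^s$, and takes colimits. The only cosmetic difference is that the paper first reduces to $s=0$ by shifting, whereas you carry the general $s$ through with the shifted $t$-structure $(\Cscr_{\geq -s},\Cscr_{\leq -s})$ and spell out the bookkeeping and naturality in $n$ that the paper leaves implicit.
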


\begin{proof}
    It is enough to consider the case $s=0$, in which case we have that
    $\tau_{\geq\star}^\B(\gr^0M)=\tau_{\geq\star}^\B(\ins^0M)\we\ins^0(\tau_{\geq\star}M)$ by
    Lemma~\ref{lem:ins_t_exact}. Taking
    colimits gives the result.
\end{proof}

\begin{proof}[Proof of Theorem~\ref{thm:decalage}]
    Consider the following commutative diagram
    $$\xymatrix{
        \pi_0\gr^{[0,r+1)}_\F
        \ar[d]&\pi_0\gr^{[0,\infty)}_\Dec\gr^{[0,r+1)}_\F\ar[r]^{\mathbf{2}}_\twoheadrightarrow\ar[d]\ar[l]_{\mathbf{1}}^\iso&\pi_0\gr^{[0,r)}_\Dec\gr^{[0,r+1)}_\F\ar[d]&\pi_0\gr^{[0,r)}_\Dec\gr^{[0,\infty)}_\F\ar[l]^\iso_{\mathbf{3}}\ar[r]^{\mathbf{4}}_\twoheadrightarrow\ar[d]&\pi_0\gr^{[0,r)}_\Dec\ar[d]\\
        \pi_0\gr^{[-r,1)}_\F&\pi_0\gr^{[-r+1,\infty)}_\Dec\gr^{[-r,1)}_\F\ar[l]_{\mathbf{5}}^\hookleftarrow\ar[r]^{\mathbf{6}}_\iso&\pi_0\gr^{[-r+1,1)}_\Dec\gr^{[-r,1)}_\F&\pi_0\gr^{[-r+1,1)}_\Dec\gr^{[-r,\infty)}_\F\ar[r]^{\mathbf{8}}_\iso\ar[l]_{\mathbf{7}}^\hookleftarrow&\pi_0\gr^{[-r+1,1)}_\Dec,
    }$$ referred to below as the $2\times 5$-diagram. To make precise what is meant here, for example
    $\gr^{[0,\infty)}_\Dec\gr^{[0,r+1)}_\F=\Dec^0(\gr^{[0,r+1)}_\F)$, where $\gr^{[0,r+1)}_\F$ is
    viewed as a filtration via Construction~\ref{const:filtered_gradeds}.
    Similarly, $\gr^{[0,r)}_\Dec$ is the $[0,r)$-associated graded of the d\'ecalage filtration associated
    to the original filtration $\F^\star$. The maps arise from functoriality of the d\'ecalage
    construction.

    Note that $\E^{r+1}_{0,0}(\F)$ is the epi-mono factorization of the left-hand map, while
    $\E^r_{0,0}(\Dec(\F))$ is the epi-mono factorization of the right-hand map. Justification of the
    displayed properties of maps $\mathbf{1},\ldots,\mathbf{8}$ will show
    that there is a canonical isomorphism of these objects of $\Cscr^\heart$.
    \begin{enumerate}
        \item[({\bf 1})] There is a fiber sequence of filtered objects
            $\tau_{\geq 0}^\B\gr^{[0,r+1)}_\F\rightarrow\gr^{[0,r+1)}_\F\rightarrow\tau_{\leq
            -1}^\B\gr^{[0,r+1)}_\F$ whose colimit is
            $\gr^{[0,\infty)}_\Dec\gr^{[0,r+1)}_\F\rightarrow\gr^{[0,r+1)}_\F\rightarrow\gr^{(-\infty,-1]}_\Dec\gr^{[0,r+1)}_\F$.
            By Example~\ref{ex:short_exact}, $\gr^{(-\infty,-1]}_\Dec\gr^{[0,r+1)}_\F$ has a finite
            filtration (induced by $\F$) with
            associated graded pieces $\gr^{(-\infty,-1]}_\Dec\gr^a_\F$. By Lemma~\ref{lem:ins_decalage},
            $\gr^{(-\infty,-1]}_\Dec\gr^a_\F\we\tau_{\leq -a-1}\gr^a_\F$ for $0\leq a\leq r$.
            Thus,
            $\gr^{(-\infty,-1]}_\Dec\gr^{[0,r+1)}_\F$ is an iterated extension of objects in
            $\Cscr_{\leq -1}$, so it is in $\Cscr_{\leq -1}$, which shows that $\mathbf{1}$ is an isomorphism.
        \item[({\bf 2})] By Lemma~\ref{ex:short_exact}, $\gr^{[0,\infty)}_\Dec\gr^{[0,r+1)}_\F$ has a
            finite filtration with graded pieces $\gr^{[0,\infty)}_\Dec\gr^a_\F\we\tau_{\geq
            -a}\gr^a_F$ for $0\leq a\leq r$
            and similarly $\gr^{[0,r)}_\Dec\gr^{[0,r+1)}_\F$ has a finite filtration with graded
            pieces $\gr^{[0,r)}_\Dec\gr^a_\F\we\tau_{[-a,-a+r)}\gr^a_\F$. The natural map is
            compatible with these filtrations, so on its fiber there is a finite filtration with
            graded pieces $\tau_{\geq -a+r}\gr^a_\F$ for $0\leq a\leq r$. It follows that the fiber
            is connective and hence the displayed map $\mathbf{2}$ on $\pi_0$ is surjective.
        \item[({\bf 3})] The fiber of the morphism which induces $\mathbf{3}$ is
            $\gr^{[0,r)}_\Dec\gr^{[r+1,\infty)}_\F$, which has a finite filtration (now using
            $\Dec$) whose associated graded pieces are given by $|\pi_a^\B\gr^{[r+1,\infty)}_\F[a]|$ for $0\leq a<r$. The
            question of whether
            $\mathbf{3}$ is an isomorphism depends only on the homotopy objects of
            $|\pi_a^\B\gr^{[r+1,\infty)}_\F[a]|$ for $0\leq a<r$ and hence the cohomology groups of the
            corresponding cochain complexes thanks to Lemma~\ref{lem:cohomology_of_complex}.
            The cochain complex representing $\pi_a^\B\gr^{[r+1,\infty)}_\F$ is of the form
            $$\cdots\rightarrow
            0\rightarrow\pi_{-r-1+a}\gr^{r+1}_\F\rightarrow\pi_{-r-a+a-1}\gr^{r+2}_\F\rightarrow\cdots,$$
            where $\pi_{-r-1+a}\gr^{r+1}$ has cohomological degree $r+1$. This cochain complex has
            $\H^a=0$ for $a<r+1$ and hence the object $|\pi_a^\B\gr^{[r+1,\infty)}_\F[a]|$ has
            $\pi_b=0$ for $b>-r-1+a$. In particular, as $a<r$, $\pi_b=0$ for $b>-2$.
            Hence, $\mathbf{3}$ is an isomorphism.
        \item[({\bf 4})] The map $\mathbf{4}$ is induced by a map
            $\gr^{[0,r)}_\Dec\gr^{[0,\infty)}_\F\rightarrow\gr^{[0,r)}_\Dec$, whose cofiber is
            $\gr^{[0,r)}_\Dec\gr^{(-\infty,-1]}_\F$, where $\gr^{(-\infty,-1]}_\F$ denotes the
            filtered object
            $$\cdots\rightarrow 0\rightarrow\tfrac{\F^{-1}}{\F^0}\rightarrow\tfrac{\F^{-2}}{\F^0}\rightarrow\cdots.$$ It is enough to show that $\pi_0$ of this cofiber
            vanishes. For this, one can again filter using the Beilinson Whitehead tower and reduce to considering
            $|\pi_a^\B\gr^{(-\infty,-1]}_\F[a]|$ for $0\leq a<r$.
            The cochain complex representing $\pi_a^\B\gr^{(-\infty,-1]}_\F$ is of the form
            $$\cdots\pi_{a+3}\gr^{-3}_\F\rightarrow\pi_{a+2}\gr^{-2}_\F\rightarrow\pi_{a+1}\gr^{-1}_\F\rightarrow
            0\rightarrow\cdots,$$
            where $\pi_{a+1}\gr^{-1}_\F$ is in cohomological degree $-1$. It follows that 
            $|\pi_a^\B\gr^{(-\infty,-1]}_\F[a]|$ has homotopy concentrated in degrees $\geq 1+a$. 
            Thus, $\gr^{[0,r)}_\Dec\gr^{(-\infty,-1]}_\F$ has vanishing $\pi_0$ since
            $0\leq a<r$. In particular, $\mathbf{4}$ is an epimorphism.
        \item[({\bf 5})] Arguing as in the case of $\mathbf{1}$, one reduces to studying the cofiber
            $\gr_\Dec^{(-\infty,-r]}\gr^{[-r,1)}_\F$ which reduces via a finite filtration to
            $\gr_\Dec^{(-\infty,-r]}\gr^a_\F\we\tau_{\leq -a-r}\gr^a_\F$ for $-r\leq a<1$.
            For these $a$, the truncation $\tau_{\leq -a-r}\gr^a_\F$ is in $\Cscr_{\leq 0}$, so the
            cofiber is coconnective. It follows that $\mathbf{5}$ is a monomorphism.
        \item[({\bf 6})] The fiber of the map inducing $\mathbf{6}$ has a finite filtration
            with associated graded pieces $\gr_\Dec^{[1,\infty)}\gr^a_\F\we\tau_{\geq
            -a+1}\gr^a_\F$ for $-r\leq a < 1$. It follows that the fiber is $1$-connective and
            hence $\mathbf{6}$ is an isomorphism.
        \item[({\bf 7})] The fiber of the map in question is
            $\gr^{[-r+1,1)}_\Dec\gr^{[1,\infty)}_\F$, which has a finite filtration with associated
            graded pieces $|\pi_a^\B\gr^{[1,\infty)}_\F[a]|$ for $-r+1\leq a<1$. The homotopy
            objects $\pi_a^\B\gr^{[1,\infty)}_\F$ are represented by cochain complexes
            $$\cdots\rightarrow
            0\rightarrow\pi_{a-1}\gr^1_\F\rightarrow\pi_{a-2}\gr^2_\F\rightarrow\cdots,$$
            where $\pi_{a-1}\gr^1_\F$ is in cohomological degree $1$. Thus, the objects
            $|\pi_a^\B\gr^{[1,\infty)}_\F[a]|$ are in homological degree $\leq -1+a$. As $a\leq 0$,
            these have no $\pi_0$. Thus, the given map $\mathbf{7}$ is injective.
        \item[({\bf 8})] Finally, the cofiber of $\gr^{[-r,\infty)}_\F\rightarrow\F$ is
            $\gr_\F^{(-\infty,-r)}$. Thus, the cofiber of
            $\gr^{[-r+1,1)}_\Dec\gr^{[-r,\infty)}_\F\rightarrow\gr^{[-r+1,1)}_\Dec$ admits a finite
            filtration with graded pieces given by $|\pi_a^\B\gr_\F^{(-\infty,-r)}[a]|$ for $-r+1\leq a<1$.
            The homotopy objects themselves are represented by objects
            $$\cdots\rightarrow\pi_{r+2+a}\gr^{-r-2}_\F\rightarrow\pi_{r+1+a}\gr^{-r-1}\rightarrow 0\rightarrow\cdots,$$
            where the $\gr^{-r-1}$ term is in cohomological degree $-r-1$.
            Thus, the suspension has homotopy concentrated in degrees $\geq r+1+a$. As $-r+1\leq
            a$, this is in degrees at least $2$. Thus, the cofiber does not contribute to $\pi_0$
            and $\mathbf{8}$ is an isomorphism.
    \end{enumerate}
    This completes the justification of the decorations in the commutative diagram above.

    By functoriality of epi-mono factorizations in an abelian category, each commutative square in
    the commutative diagram above induces an isomorphism of epi-mono factorizations of the
    vertical morphisms. In particular, there is a canonical zigzag $$\E^{r+1}_{0,0}(\F)\leftarrow
    A\rightarrow B\leftarrow C\rightarrow\E^{r}_{0,0}(\Dec(F))$$ where each map is an isomorphism.
    This proves that the homotopy objects on the $\E^{r+1}(\F)$ and $\E^r(\Dec(\F))$ pages agree
    naturally for all filtrations and all $r\geq 1$.

    To finish the proof, it is enough to show that the identifications constructed above are compatible
    with the differentials. It is enough to check this for the differentials emanating from
    $(0,0)$. For this, consider the general form of the $2\times 5$-diagram above,
    which computes the isomorphism between the $\E^{r+1}_{s,t}(\F)$-term and the
    $\E^r_{-t,s+2t}(\Dec(\F))$-term:
    $$\scalebox{.75}{
        \xymatrix{
            \pi_{s+t}\gr^{[-s,-s+r+1)}_\F
            \ar[d]&\pi_{s+t}\gr^{[t,\infty)}_\Dec\gr^{[-s,-s+r+1)}_\F\ar[r]\ar[d]\ar[l]&\pi_{s+t}\gr^{[t,t+r)}_\Dec\gr^{[-s,-s+r+1)}_\F\ar[d]&\pi_{s+t}\gr^{[t,t+r)}_\Dec\gr^{[-s,\infty)}_\F\ar[l]\ar[r]\ar[d]&\pi_{s+t}\gr^{[t,t+r)}_\Dec\ar[d]\\
            \pi_{s+t}\gr^{[-s-r,-s+1)}_\F&\pi_{s+t}\gr^{[t-r+1,\infty)}_\Dec\gr^{[-s-r,-s+1)}_\F\ar[l]\ar[r]&\pi_{s+t}\gr^{[t-r+1,1)}_\Dec\gr^{[-s-r,-s+1)}_\F&\pi_{s+t}\gr^{[t-r+1,1)}_\Dec\gr^{[-s-r,\infty)}_\F\ar[r]\ar[l]&\pi_{s+t}\gr^{[t-r+1,1)}_\Dec.
    }}$$
    We will use the following commutative diagram
    $$\xymatrix{
        \gr^{[r+1,2r+2)}_\F\ar@{=}[d]&
        \gr^{[r,\infty)}_\Dec\gr^{[r+1,2r+2)}_\F\ar[l]\ar[d]\ar[r]&
        \gr^{[r,2r)}_\Dec\gr^{[r+1,2r+2)}_\F\ar[d]&
        \gr^{[r,2r)}_\Dec\gr^{[r+1,\infty)}_\F\ar[l]\ar[d]\ar[r]&
        \gr^{[r,2r)}_\Dec\ar@{=}[d]\\
        \gr^{[r+1,2r+2)}_\F\ar[d]&
        \gr^{[0,\infty)}_\Dec\gr^{[r+1,2r+2)}_\F\ar[l]\ar[d]\ar[r]&
        P\ar[d]&
        \gr^{[r,2r)}_\Dec\gr^{[0,\infty)}_\F\ar[l]\ar[d]\ar[r]&
        \gr^{[r,2r)}_\Dec\ar[d]\\
        \gr^{[0,2r+2)}_\F\ar[d]&
        \gr^{[0,\infty)}_\Dec\gr^{[0,2r+2)}_\F\ar[l]\ar[d]\ar[r]&
        \gr^{[0,2r)}_\Dec\gr^{[0,2r+2)}\ar[d]&
        \gr^{[0,2r)}_\Dec\gr^{[0,\infty)}_\F\ar[l]\ar[d]\ar[r]&
        \gr^{[0,2r)}_\Dec\ar[d]\\
        \gr^{[0,r+1)}_\F&
        \gr^{[0,\infty)}_\Dec\gr^{[0,r+1)}_\F\ar[l]\ar[r]&
        \gr^{[0,r)}_\Dec\gr^{[0,r+1)}_\F&
        \gr^{[0,r)}_\Dec\gr^{[0,\infty)}_\F\ar[l]\ar[r]&
        \gr^{[0,r)}_\Dec.
    }$$
    Here, the bottom $3\times 5$-diagram has exact columns, which defines $P$.
    The map $\gr^{[r,2r)}_\Dec\gr^{[r+1,2r+2)}_\F\rightarrow P$ exists, and fits into such a
    commutative diagram, because the composition
    $$\gr^{[r,2r)}_\Dec\gr^{[r+1,2r+2)}_\F\rightarrow\gr^{[0,2r)}_\Dec\gr^{[0,2r+2)}_\F\rightarrow\gr^{[0,r)}_\Dec\gr^{[0,r+1)}_\F$$
    is nullhomotopic.
    By taking vertical boundaries from $\pi_0$ to $\pi_{-1}$ in the $3\times 5$-diagram and then
    precomposing with the map on $\pi_{-1}$ from the top $2\times 5$-diagram one obtains the
    following diagram:
    $$\scalebox{.67}{
        \xymatrix{
            &
            \pi_0\gr^{[0,r+1)}_\F\ar[dd]\ar@{->>}[ld]&
            \pi_0\gr^{[0,\infty)}_\Dec\gr^{[0,r+1)}_\F\ar[l]^{\iso}\ar[r]_{\twoheadrightarrow}\ar[dd]&
            \pi_0\gr^{[0,r)}_\Dec\gr^{[0,r+1)}_\F\ar[dd]&
            \pi_0\gr^{[0,r)}_\Dec\gr^{[0,\infty)}_\F\ar[l]^\iso\ar[r]_\twoheadrightarrow\ar[dd]&
            \pi_0\gr^{[0,r)}_\Dec\ar[dd]\ar@{->>}[dr]&
            \\
            \E^{r+1}_{0,0}(\F)\ar[dd]_{\d^{r+1}_\F}&&&&&&\E^r_{0,0}(\Dec(\F))\ar[dd]^{\d^r_\Dec}\\
            &
            \pi_{-1}\gr^{[r+1,2r+2)}_\F&
            \pi_{-1}\gr^{[0,\infty)}_\Dec\gr^{[r+1,2r+2)}_\F\ar[l]\ar[r]&
            \pi_{-1}P&
            \pi_{-1}\gr^{[r,2r)}_\Dec\gr^{[0,\infty)}_\F\ar[l]_{\mathbf{B}}^\hookleftarrow\ar[r]&
            \pi_{-1}\gr^{[r,2r)}_\Dec&
            \\
            \E^{r+1}_{-r-1,r}(\F)&&&&&&\E^r_{-r,r-1}(\Dec(\F)).\\
            &
            \pi_{-1}\gr^{[r+1,2r+2)}_\F\ar@{=}[uu]\ar@{->>}[ul]&
            \pi_{-1}\gr^{[r,\infty)}_\Dec\gr^{[r+1,2r+2)}_\F\ar[l]^\iso\ar[uu]^{\mathbf{A}}_\iso\ar[r]_\twoheadrightarrow&
            \pi_{-1}\gr^{[r,2r)}_\Dec\gr^{[r+1,2r+2)}_\F\ar[uu]&
            \pi_{-1}\gr^{[r,2r)}_\Dec\gr^{[r+1,\infty)}_\F\ar[l]^\iso\ar[uu]\ar[r]_\twoheadrightarrow&
            \pi_{-1}\gr^{[r,2r)}_\Dec\ar@{=}[uu]\ar@{->>}[ur]&
        }
    }$$
    The middle $3\times 5$-diagram is commutative by commutativity of the $4\times 5$-diagram
    above. The outer trapezoids are commutative by definition of the the $\d^{r+1}_\F$ and
    $\d^r_{\Dec}$-differentials.
    Now, we claim that the arrow marked $\mathbf{A}$ is an isomorphism and that the arrow marked
    $\mathbf{B}$ is injective.

    To check the claim for $\mathbf{A}$, we can argue as in the case of $\mathbf{2}$ above to observe
    that the cofiber of
    $\gr^{[r,\infty)}_\Dec\gr^{[r+1,2r+2)}_\F\rightarrow\gr^{[0,\infty)}_\Dec\gr^{[r+1,2r+2)}_\F$
    admits a finite filtration with graded pieces given by $|\pi_a^\B\gr^{[r+1,2r+2)}_\F[a]|$ for $0\leq a<r$.
    The cochain complex $\pi_a^\B\gr^{[r+1,2r+2)}_\F$ takes the form
    $$\cdots\rightarrow 0\rightarrow\pi_{-r-1+a}\gr^{r+1}_\F\rightarrow\pi_{-r-2+a}\gr^{r+2}_\F\rightarrow\cdots\rightarrow\pi_{-2r-1+a}\gr^{2r+1}_\F\rightarrow
    0\rightarrow\cdots,$$
    where $\pi_{-r-1+a}\gr^{r+1}_\F$ is in cohomological degree $r+1$. This complex has cohomology
    concentrated in $[r+1,2r+1]$ and hence the $a$-fold suspension of its realization $|\pi_a^\B\gr^{[r+1,2r+2)}_\F[a]|$
    has nonzero homotopy objects in $[-2r-1+a,-r-1+a]$. As $r\geq 1$ and $0\leq a<r$, each graded
    piece is $(-2)$-coconnective.

    For $\mathbf{B}$, consider the induced map on long
    exact sequences in homotopy from the fourth column to the third column in the $4\times
    5$-diagram above. It yields a commutative diagram
    $$\xymatrix{
        \pi_0\gr^{[0,2r)}_\Dec\gr^{[0,\infty)}_\F\ar[r]\ar[d]&\pi_0\gr^{[0,r)}_\Dec\gr^{[0,\infty)}_\F\ar[r]\ar[d]&\pi_{-1}\gr^{[r,2r)}_\Dec\gr^{[0,\infty)}_\F\ar[r]\ar[d]_{\mathbf{B}}&\pi_{-1}\gr^{[0,2r)}_\Dec\gr^{[0,\infty)}_\F\ar[d]\\
        \pi_0\gr^{[0,2r)}_\Dec\gr^{[0,2r+2)}_\F\ar[r]&\pi_0\gr^{[0,r)}_\Dec\gr^{[0,r+1)}_\F\ar[r]&\pi_{-1}P\ar[r]&\pi_{-1}\gr^{[0,2r)}_\Dec\gr^{[0,2r+2)}_\F.
    }$$
    The two left vertical arrows and the rightmost vertical arrow are isomorphisms by now-standard
    arguments, which we omit (but one of them is $\mathbf{3}$). This proves that $\mathbf{B}$ is injective.

    We can now check that
    the two maps from $\pi_0\gr^{[0,\infty)}_\Dec\gr^{[0,r+1)}_\F$ to $\E^r_{-r,r-1}(\Dec(\F))$ are
    equal, one given by composition along the top to $\E^r_{0,0}(\Dec(\F))$ followed by $\d^r_\Dec$
    and the other given by the composition of the boundary to
    $\pi_{-1}\gr^{[0,\infty)}_\Dec\gr^{[r+1,2r+2)}_\F$, the inverse of $A$, and the composition
    along the bottom row to $\E^r_{-r,r-1}(\Dec)$. Indeed, the bottom row and top row paths from
    $\pi_0\gr^{[0,\infty)}_\Dec\gr^{[0,r+1)}_\F$ to $\pi_{-1}P$ agree and then the bottom row and
    top row paths to $\pi_{-1}\gr^{[r,2r)}_\Dec\gr^{[0,\infty)}_\F$ agree by injectivity of
    $\mathbf{B}$. The rest follows by commutativity and this completes the proof.
\end{proof}

\begin{remark}
    The arguments in $\mathbf{1},\ldots,\mathbf{8}$ can be deduced pictorially as follows. We show
    the case of $\mathbf{2}$ for $r=7$.
    Figure~\ref{fig:decalagess} shows the spectral sequence for $\gr^{[0,7)}_\Dec\gr^{[0,8)}_\F$ sitting inside
    the spectral sequence for $\F^\star$ as the filled symbols. The map
    $\gr^{[0,\infty)}_\Dec\gr^{[0,8)}_\F\rightarrow\gr^{[0,7)}_\Dec\gr^{[0,8)}_\F$ cuts away the
    $\star$s and we see that it induces a surjection on $\pi_0$, which the green symbols contribute
    to.

    \begin{sseqdata}[name = decalage, homological Serre grading, classes = {draw = none } ]

    \foreach \x in {0} {
        \foreach \y in {1,...,6} {
            \class["\bullet"](\x,\y)
        }
    }

    \foreach \x in {-1} {
        \foreach \y in {0,2,3,4,5,6} {
            \class["\bullet"](\x,\y)
        }
    }

    \foreach \x in {-2} {
        \foreach \y in {0,1,3,4,5,6} {
            \class["\bullet"](\x,\y)
        }
    }

    \foreach \x in {-3} {
        \foreach \y in {0,1,2,4,5,6} {
            \class["\bullet"](\x,\y)
        }
    }
    \foreach \x in {-4} {
        \foreach \y in {0,1,2,3,5,6} {
            \class["\bullet"](\x,\y)
        }
    }
    \foreach \x in {-5} {
        \foreach \y in {0,1,2,3,4,6} {
            \class["\bullet"](\x,\y)
        }
    }
    \foreach \x in {-6} {
        \foreach \y in {0,1,2,3,4,5} {
            \class["\bullet"](\x,\y)
        }
    }

    \foreach \x in {-7} {
        \foreach \y in {0,1,2,3,4,5,6} {
            \class["\bullet"](\x,\y)
        }
    }

    \foreach \x in {-10,...,-8} {
        \foreach \y in {7, 8} {
            \class["\diamond"](\x,\y)
        }
    }

    \foreach \x in {-10,...,3} {
        \foreach \y in {-2,-1} {
            \class["\diamond"](\x,\y)
        }
    }

     \foreach \x in {1,2,3} {
        \foreach \y in {7, 8} {
            \class["\diamond"](\x,\y)
        }
    }

    \foreach \y in {0,...,6} {
        \foreach \x in {-10,...,-8} {
            \class["\circ"](\x,\y)
        }
        \foreach \x in {1,...,3} {
            \class["\circ"](\x,\y)
        }
    }

    \foreach \x in {-7,...,0} {
        \foreach \y in {8} {
            \class["\star"](\x,\y)
        }
    }

    \foreach \x in {-6,...,0} {
        \class["\color{green}\bullet"](\x,-\x)
    }

    \class["\color{green}\star"](-7,7)

    \foreach \x in {-6,...,0} {
        \foreach \y in {7} {
            \class["\star"](\x,\y)
        }
    }

    \d7(0,0)
    \end{sseqdata}

    \begin{figure}[h]
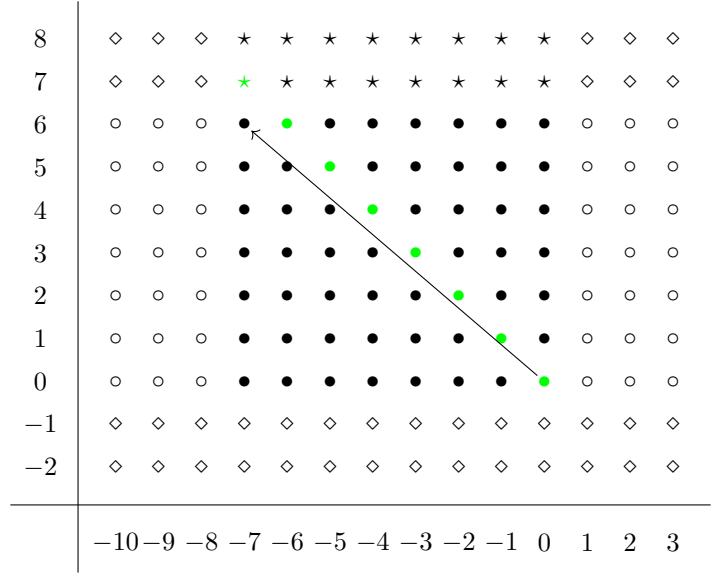

        \centering
        \printpage[name = decalage, page = 7,xscale=0.57,yscale=0.57,x range = {-10}{3},y range = {-2}{8}]
        \caption{The spectral sequence for $\F^\star$ in the proof of
            Theorem~\ref{thm:decalage} in the special case of $r=7$. The solid
            symbols contribute the spectral sequence for $\gr^{[0,7)}_\Dec\gr^{[0,8)}_\F M$, while
            only the solid circles contribute to the spectral squence
            for $\gr^{[0,7)}_{\Dec(\F)}\gr^{[0,8)}_\F$. A $\d^7$-differential is shown and the
            green symbols contribute to $\pi_0$.}
        \label{fig:decalagess}
    \end{figure}
\end{remark}

\section{Deligne's d\'ecalage}

To compare the d\'ecalage functor constructed via the Beilinson $t$-structure
to Deligne's d\'ecalage functor from~\cite[1.3.3]{deligne-hodge-2}, suppose that
$\F^\star M_\cs$ is a filtered chain complex of objects in an abelian category
$\Ascr$ and that the filtration is strict
in the sense that $\F^{s+1} M_n\rightarrow\F^{s}M_n$ is injective for each
$s$ and $n$ in $\bZ$.

\begin{definition}\label{def:delignedecalage}
    Define $\Dec(\F)^\star M_\cs$ by the formula
    $$\Dec(\F)^s M_n=\ker\left(\d\colon\F^{s-n}
    M_{n}\rightarrow\frac{M_{n-1}}{\F^{s-n+1}M_{n-1}}\right).$$
    By construction, the differential on $M_\cs$ restricts to a differential on
    each $\Dec(\F)^s M_\bullet$ so that $\Dec(\F)^\star M_\cs$ defines a filtered
    complex.
\end{definition}

\begin{remark}[Cohomological version]
    If $\F^\star M^\bullet$ is a strictly filtered cochain complex,
    $\Dec(\F)^\star M^\bullet$ is defined analogously as
    $$\Dec(\F)^s
    M^n=\ker\left(\d\colon\F^{s+n}M^n\rightarrow\frac{M^{n+1}}{\F^{s+n+1}M^n}\right).$$
\end{remark}

\begin{construction}
    Each $\Dec(\F)^s M_\bullet$ admits its own filtration by subcomplexes $\G^\star\Dec(\F)^s
    M$ defined by $$\G^w\Dec(\F)^s
    M_n=\ker\left(\d\colon\F^{\max(w,s-n)}M_n\rightarrow\frac{M_{n-1}}{\F^{\max(w,s-n+1)}M_{n-1}}\right).$$
    For $w\geq s-n+1$, $\G^w\Dec(\F)^s M_n=\F^w M_n$ and for $w\leq s-n$,
    $\G^w\Dec(\F)^s M_n=\Dec(\F)^s M_n$. Thus, $\G^\star\Dec(\F)^s M_\cs$ is an
    exhaustive filtration on $\Dec(\F)^s M_\cs$, which is complete if and only
    if the original filtration $\F^\star M$ is. The natural inclusions induce
    maps
    $$\G^\star\Dec(\F)^{s+1}M_\cs\rightarrow\G^\star\Dec(\F)^{s}M_\cs$$
    which make $\G^\star\Dec(\F)^\star M_\cs$ into a (strict) bifiltration
    in chain complexes. For $s$ fixed, $\G^\star\Dec(\F)^s M_\bullet$ is a filtered subcomplex of
    $\F^\star M_\bullet$.
\end{construction}

The following theorem was one of the starting points for this paper.

\begin{theorem}\label{thm:decalagecomparison}
    The natural inclusion $\G^\star\Dec(\F)^s M_\cs\rightarrow\F^\star M_\cs$
    induces an equivalence $\G^\star\Dec(\F)^s
    M\rightarrow\tau_{\geq s}^\B(\F)M$ of towers of filtered objects.
\end{theorem}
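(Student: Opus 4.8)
The plan is to identify $\G^\star\Dec(\F)^s M$ with the Beilinson connective cover $\tau_{\geq s}^\B(\F)M$ by producing a fiber sequence $\G^\star\Dec(\F)^s M\to\F^\star M\to Q^\star$ in $\F\D(\Ascr)$ whose left term is Beilinson $s$-connective and whose right term is Beilinson $(s-1)$-coconnective. The orthogonality of the two halves of a $t$-structure then forces the left term to be $\tau_{\geq s}^\B$ and the right to be $\tau_{\leq s-1}^\B$, which is what we want. Throughout I use the characterization of the (incomplete) Beilinson $t$-structure from Definition~\ref{def:noncompletebeilinson}: a filtration $Y^\star$ lies in $(\F\D(\Ascr))_{\geq s}^\B$ if and only if $\gr^w_Y\in\D(\Ascr)_{\geq s-w}$ for all $w$, and it lies in $(\F\D(\Ascr))_{\leq s-1}^\B$ if and only if it is complete and $\gr^w_Y\in\D(\Ascr)_{\leq s-1-w}$ for all $w$. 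Strictness of $\F^\star$ is used to guarantee that the subcomplexes $\G^w\Dec(\F)^s M_\cs\subseteq\F^w M_\cs$ and their quotients represent the honest fibers and cofibers in $\D(\Ascr)$.

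The heart of the argument is the computation of the $\G$-graded pieces. Fixing $s$ and $w$, I examine $\G^w\Dec(\F)^s M_\cs$ degree by degree: by the formulas in the Construction, in chain degrees $n\geq s-w+1$ one has $\G^w\Dec(\F)^s M_n=\F^w M_n$, while in degrees $n\leq s-w$ one has $\G^w\Dec(\F)^s M_n=\Dec(\F)^s M_n\subseteq\F^{s-n}M_n\subseteq\F^w M_n$. Passing to $\gr^w_\G=\G^w/\G^{w+1}$, the terms in degrees $n\geq s-w+1$ become $\gr^w_\F M_n$, the terms in degrees $n\leq s-w-1$ vanish, and in the critical degree $n=s-w$ one is left with $\Dec(\F)^s M_{s-w}/\F^{w+1}M_{s-w}$, which unwinds to the kernel of the induced differential $\gr^w_\F M_{s-w}\to\gr^w_\F M_{s-w-1}$, i.e. the cycles $Z_{s-w}(\gr^w_\F M)$. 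Hence $\gr^w_\G\Dec(\F)^s M_\cs$ is exactly the canonical truncation $\tau_{\geq s-w}\gr^w_\F M$, which lies in $\D(\Ascr)_{\geq s-w}$, proving $\G^\star\Dec(\F)^s M\in(\F\D(\Ascr))_{\geq s}^\B$. Recognizing that Deligne's décalage refined by the secondary filtration $\G^\star$ produces precisely the good truncations of the graded complexes is the conceptual crux, and aligning the degree shift with the Beilinson connectivity bound is the step I expect to require the most care.

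For the cofiber $Q^\star$, taking associated gradeds in the fiber sequence gives $\gr^w_Q\we\cofib(\tau_{\geq s-w}\gr^w_\F M\to\gr^w_\F M)\we\tau_{\leq s-w-1}\gr^w_\F M\in\D(\Ascr)_{\leq s-1-w}$, which is the coconnectivity bound. It remains to verify that $Q^\star$ is complete. This is the only point that might look worrisome, since neither $\F^\star$ nor $\G^\star\Dec(\F)^s M$ need be complete when $\F^\star$ is not; but it is automatic here. By the same degree analysis, $Q^w$ is concentrated in chain degrees $n\leq s-w$, so for each fixed $n$ the tower $(Q^w)_w$ vanishes as soon as $w>s-n$; therefore $\lim_w Q^w\we 0$. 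Thus $Q^\star\in(\F\D(\Ascr))_{\leq s-1}^\B$, and orthogonality identifies the fiber sequence $\G^\star\Dec(\F)^s M\to\F^\star M\to Q^\star$ with the Beilinson truncation triangle, giving $\G^\star\Dec(\F)^s M\we\tau_{\geq s}^\B(\F)M$.

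Finally, for the tower statement I would observe that all of these identifications are canonical and realized by maps lying over $\F^\star M$. The transition maps $\G^\star\Dec(\F)^{s+1}M\to\G^\star\Dec(\F)^s M$ of the Construction lie over $\F^\star M$, and since $\tau_{\geq s}^\B(\F)M\to\F^\star M$ is a connective cover, the map out of the $(s+1)$-connective object $\G^\star\Dec(\F)^{s+1}M$ factors through it uniquely. This forces the transition maps to agree with the Whitehead-tower maps $\tau_{\geq s+1}^\B(\F)M\to\tau_{\geq s}^\B(\F)M$, yielding the asserted equivalence of towers of filtered objects.
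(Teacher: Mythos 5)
Your proposal is correct and follows essentially the same route as the paper: both arguments reduce the theorem to the degree-by-degree identification $\gr^w_\G\Dec(\F)^s M_\cs\we\tau_{\geq s-w}\gr^w_\F M$ (with the cycles $Z_{s-w}(\gr^w_\F M)$ appearing in the critical degree), which is exactly the computation in the paper's proof. You additionally spell out the bookkeeping the paper leaves implicit --- the orthogonality argument, the completeness of the cofiber $Q^\star$ via its vanishing in degrees $n>s-w$, and the compatibility of the transition maps with the Whitehead tower --- all of which is sound.
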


\begin{proof}
    It is enough to prove that $\G^\star\Dec(\F)^s M_\cs\rightarrow\F^\star
    M_\cs$ is equivalent to the $s$-connective cover in the Beilinson
    $t$-structure on $\F\D(\bZ)$. For this, it is enough to
    check that $\gr^w_\G\Dec(\F)^s M_\cs\we\tau_{\geq s-w}\gr^w_\F M$ for all $w,s\in\bZ$.
    By construction, $\gr^w_\G\Dec(\F)^s M_\cs$ is given by a complex
    $$\cdots\rightarrow\gr^w_\F M_{s-w+2}\rightarrow\gr^w_\F
    M_{s-w+1}\rightarrow\frac{\ker(\d\colon\F^w
    M_{s-w}\rightarrow\frac{M_{s-w-1}}{\F^{w+1}M_{s-w-1}})}{\F^{w+1}M_{s-w}}\rightarrow
    0\rightarrow\cdots.$$
    The natural map to $\gr^w_\F M_\cs$ is a quasi-isomorphism to $\tau_{\geq
    s-w}\gr^w_\F M_\cs$ since every cycle of $\gr^w_\F M_\cs$ in homological
    degree $s-w$ is represented by an element of $\ker(\d\colon\F^w
    M_{s-w}\rightarrow\frac{M_{s-w-1}}{\F^{w+1}M_{s-w-1}})$.
\end{proof}

\begin{corollary}
    The d\'ecalage construction of Construction~\ref{const:decalage} agrees with
    Deligne's for strictly filtered chain complexes and there is a natural equivalence
    $\G^\star\Dec(\F)^\star M\we\tau_{\geq\star}^\B(\F)M$ of bifiltered objects of $\D(\bZ)$.
\end{corollary}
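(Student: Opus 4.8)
The plan is to read off both assertions from Theorem~\ref{thm:decalagecomparison} by applying the realization functor, so that essentially all of the work is already done. The bifiltered statement is a repackaging of that theorem: it provides, for each $s$ and natural in $s$, an equivalence $\G^\star\Dec(\F)^s M\we\tau_{\geq s}^\B(\F)M$ of filtered objects, induced by the inclusions $\G^\star\Dec(\F)^s M_\cs\rightarrow\F^\star M_\cs$. Since the $\tau_{\geq s}^\B(\F)M$ assemble into the Beilinson Whitehead tower, while the left-hand sides assemble, via the secondary filtration, into the strict bifiltration $\G^\star\Dec(\F)^\star M_\cs$, letting both indices vary simultaneously yields precisely the claimed equivalence $\G^\star\Dec(\F)^\star M\we\tau_{\geq\star}^\B(\F)M$ of bifiltered objects of $\D(\bZ)$.

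For the first assertion, I would apply the underlying-object functor $|-|\colon\F\D(\bZ)\rightarrow\D(\bZ)$ along the secondary ($\G$) filtration to the equivalence above, holding $s$ fixed. On the right-hand side, $|\tau_{\geq s}^\B(\F)M|$ is by definition the $s$th term $\Dec(\F)^s$ of the $\infty$-categorical d\'ecalage of Construction~\ref{const:decalage}. On the left-hand side, the secondary filtration $\G^\diamond\Dec(\F)^s M_\cs$ was built to be exhaustive on the chain complex $\Dec(\F)^s M_\cs$; concretely $\G^w\Dec(\F)^s M_n=\Dec(\F)^s M_n$ once $w\leq s-n$, so its colimit over $w\rightarrow-\infty$ is $\Dec(\F)^s M_\cs$, Deligne's d\'ecalage term. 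Passing to $\D(\bZ)$ and using that filtered colimits of complexes in $\Mod_{\bZ}$ are exact, so that the strict colimit computes the realization, this gives $|\G^\diamond\Dec(\F)^s M|\we\Dec(\F)^s M$. Combining the two identifications produces $\Dec(\F)^s M\we\Dec(\F)^s$, which is the agreement of Deligne's and the $\infty$-categorical d\'ecalage, and naturality in $s$ (already built into Theorem~\ref{thm:decalagecomparison}) promotes this to an equivalence of d\'ecalage filtrations.

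The content of the corollary thus resides entirely in Theorem~\ref{thm:decalagecomparison}, and the only step requiring genuine care is the left-hand side of this last identification: that forgetting the secondary $\G$-filtration recovers Deligne's term $\Dec(\F)^s M_\cs$ on the nose, and that this strict filtered colimit agrees with the realization $|-|$ computed in the derived $\infty$-category. Both follow from exactness of filtered colimits in $\Mod_{\bZ}$ together with the explicit formula for $\G$, but the point worth confirming is the compatibility in $s$, so that the levelwise equivalences assemble into an equivalence of filtrations rather than a merely objectwise statement.
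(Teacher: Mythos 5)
Your proposal is correct and follows exactly the route the paper intends: the corollary is an immediate consequence of Theorem~\ref{thm:decalagecomparison}, whose statement ``of towers of filtered objects'' already encodes the bifiltered equivalence, and the agreement with Deligne's d\'ecalage is obtained, as you do, by applying $|-|$ and using that the exhaustive strict filtration $\G^\diamond$ has colimit $\Dec(\F)^sM_\cs$ while $|\tau_{\geq s}^\B(\F)M|=\Dec(\F)^s$ by Construction~\ref{const:decalage}. Your care about the strict filtered colimit computing the realization in $\D(\bZ)$ (exactness of filtered colimits in $\Mod_{\bZ}$) and about naturality in $s$ is exactly the right bookkeeping, which the paper leaves implicit.
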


\section{Convergence}\label{sec:convergence}

As mentioned in the introduction, only a limited convergence result is given.
Hedenlund--Krause--Nikolaus have announced stronger results along the lines of
Boardman's~\cite{boardman} using an $\infty$-categorical approach.

\begin{definition}
    Let $\F^\star M$ be a filtered object of a stable $\infty$-category $\Cscr$
    which is equipped with a fixed $t$-structure. The induced filtration on
    $\pi_mM$ is $$\F^\star\pi_nM=\im(\pi_m\F^\star M\rightarrow\pi_mM).$$
    This is a strict filtration.
\end{definition}

The goal is to compute the associated graded pieces of $\F^\star\pi_nM$ via the spectral sequence
associated to $\F^\star M$.

\begin{definition}
    \begin{enumerate}
        \item[(1)] An upper half-plane spectral sequence is one for which $\E^1_{s,t}=0$ for
            $t<N$ for some fixed $N$. An upper half-plane filtration $\F^\star M$ is one which is
            bounded below in the Beilinson $t$-structure.
            A left half-plane spectral sequence is one for which $\E^1_{s,t}=0$ for
            $s>N$ for some fixed $N$. A left half-plane filtration is a
            filtration $\F^\star M$ such that $\gr^{-s}M\we 0$ for $s>N$.
            Lower and right half-plane filtrations are defined similarly.
        \item[(2)] By intersecting regions in the plane, one finds the notion of first,
            second, third, and fourth quadrant spectral sequences and filtrations. For example, a
            first quadrant filtration is one which is bounded below in the Beilinson
            $t$-structure and where $\gr^{-s}M\we 0$ for $-s$ sufficiently large.
        \item[(3)] Finally, a spectral sequence is column-bounded if it is left and right half-plane,
            so it is concentrated in a bounded vertical strip in the $(s,t)$-plane.
            A filtration $\F^\star M$ is bounded if $\gr^sM\we 0$ for $s\notin[-N,N]$
            for some integer $N$.
            A filtration is row-bounded if it is bounded in the Beilinson
            $t$-structure. A spectral sequence is row-bounded if $\E^1_{s,t}$ vanishes
            for $t\notin[-N,N]$, so the $\E^1$-page
            is concentrated in a bounded horizontal strip in the $(s,t)$-plane.
    \end{enumerate}
\end{definition}

\begin{lemma}
    If $\F^\star M$ is a filtration which is upper half-plane, then the
    associated spectral sequence is upper half-plane. Similarly for left
    half-plane, column-bounded, row-bounded, etc.
\end{lemma}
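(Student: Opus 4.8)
The plan is to reduce every assertion to the single identification of the $\E^1$-page recorded in Definition~\ref{def:e1}, namely $\E^1_{s,t}=\pi_{s+t}\gr^{-s}M$, together with its repackaging $\E^1_{s,t}=\pi_t^\B(\F)^{-s}$ as the term in cohomological degree $-s$ of the cochain complex $\pi_t^\B(\F)\in\Ch^\bullet(\Cscr^\heart)$ computed in Remark~\ref{rem:graded_decalage}. Once this dictionary is in place, each boundedness hypothesis on the filtration translates termwise into the corresponding vanishing region for the $\E^1$-page, and there is no further content.

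For the upper half-plane case I would argue as follows. By definition an upper half-plane filtration is one lying in $(\F\Cscr)^\B_{\geq N}$ for some $N$. For any $N$-connective object of a $t$-structure the homotopy objects in degrees below $N$ vanish; applying this to the Beilinson $t$-structure on $\F\Cscr$ gives $\pi_t^\B(\F)\we 0$ for $t<N$. Feeding this into $\E^1_{s,t}=\pi_t^\B(\F)^{-s}$ yields $\E^1_{s,t}=0$ for $t<N$ and all $s$, which is exactly the condition that the spectral sequence is upper half-plane. The row-bounded case is identical, using that a filtration bounded in the Beilinson $t$-structure has $\pi_t^\B(\F)\we 0$ for $t\notin[-N,N]$, so that $\E^1_{s,t}$ vanishes for $t$ outside $[-N,N]$.

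The half-plane conditions in the $s$-direction are even more direct, since they are phrased in terms of the graded pieces rather than the Beilinson homotopy objects. For a left half-plane filtration we have $\gr^{-s}M\we 0$ for $s>N$, whence $\E^1_{s,t}=\pi_{s+t}\gr^{-s}M=0$ for $s>N$; the right half-plane case is the mirror image with $\gr^{-s}M\we 0$ for $s<-N$. Intersecting these two gives the column-bounded statement: a bounded filtration has $\gr^sM\we 0$ for $s\notin[-N,N]$, so $\E^1_{s,t}=0$ whenever $s\notin[-N,N]$, and the spectral sequence is concentrated in a bounded vertical strip. The remaining quadrant and strip cases are obtained by intersecting the regions already treated, exactly as the boundedness notions themselves were defined by intersection.

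I do not expect any genuine obstacle here: the lemma is pure bookkeeping once the identification $\E^1_{s,t}=\pi_t^\B(\F)^{-s}$ is invoked. The only point requiring a modicum of care is matching the inequalities under the reindexing $s\mapsto-s$ relating the weight of a graded piece to the horizontal coordinate of the $\E^1$-page, and keeping straight that connectivity in the Beilinson $t$-structure controls the vertical ($t$) direction while vanishing of the graded pieces controls the horizontal ($s$) direction.
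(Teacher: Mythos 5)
Your proof is correct, and in fact the paper's own ``proof'' of this lemma is literally \emph{Left to the reader}, so your argument is exactly the intended routine unwinding: the dictionary $\E^1_{s,t}=\pi_{s+t}\gr^{-s}M=\pi_t^\B(\F)^{-s}$ translates Beilinson-$t$-structure bounds into vanishing in the $t$-direction (since $\F\in(\F\Cscr)^\B_{\geq N}$ forces $\gr^n\in\Cscr_{\geq -n+N}$, hence $\pi_{s+t}\gr^{-s}=0$ for $t<N$, and dually for coconnectivity) and graded-piece vanishing into vanishing in the $s$-direction. Note also that since the paper's definitions of the half-plane and bounded conditions on spectral sequences are stated purely in terms of the $\E^1$-page, your argument at the level of $\E^1$ already suffices, with the higher pages inheriting the vanishing automatically as subquotients.
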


\begin{proof}
    Left to the reader.
\end{proof}

\begin{notation}[Boundaries and cycles]\label{not:boundaries}
    Each term $\E^r_{s,t}$ is a subquotient of $\pi_{s+t}\gr^{-s}M$, defining
    subobjects $\tilde{\B}^r_{s,t}\subseteq\tilde{\Z}^r_{s,t}$ of
    $\pi_{s+t}\gr^{-s}M$ consisting of boundaries
    and cycles with the
    properties that
    $$\cdots\subseteq\tilde{\B}^r_{s,t}\subseteq\tilde{\B}^{r+1}_{s,t}\subseteq\cdots
    \cdots\subseteq\tilde{\Z}^{r+1}_{s,t}\subseteq\tilde{\Z}^{r}_{s,t}\subseteq\cdots,$$
    and
    $$\frac{\tilde{\Z}^r_{s,t}}{\tilde{\B}^r_{s,t}}\iso\E^{r+1}_{s,t}.$$
    Alternatively, the cycles may be defined as the terms
    $\E^{r+1}_{s,t}(\gr^{[-s,\infty)}_\F)$ (so there are no nonzero differentials hitting the
    $(s,t)$-term); the boundaries may be defined as the terms
    $\E^{r+1}_{s,t}(\gr_\F^{(-\infty,-s]}M)$ (so there are no nonzero differentials out of the
    $(s,t)$-term).
\end{notation}

\begin{definition}[The $\E^\infty$-page]
    Assume that $\Cscr$ admits sequential limits and colimits (which implies that $\Cscr^\heart$
    admits countable unions and intersections).
    Let $\tilde{\B}^\infty_{s,t}=\colim_r\tilde{\B}^r_{s,t}$ and let
    $\tilde{\Z}^{\infty}_{s,t}=\lim_r\tilde{\Z}^r_{s,t}$, where the limits and
    colimits are computed in $\Cscr^\heart$.\footnote{Note that
    $\tilde{\Z}^\infty_{s,t}$ is necessarily a subobject of
    $\E^1_{s,t}\iso\pi_{s+t}\gr^{-s}$ since limits are left exact, but that
    the natural map $\tilde{\B}^\infty_{s,t}\rightarrow\E^1_{s,t}$ need not be
    injective in general. However, it will be injective as long as the
    $t$-structure on $\Cscr$ is compatible with sequential colimits.}
    The sequential limits and colimits can be obtained by computing them in
    $\Cscr$ and then applying $\pi_0$.
    The $\E^\infty$-page of the spectral
    sequence is the lattice of objects with
    $$\E^\infty_{s,t}=\mathrm{coker}\left(\tilde{\B}^\infty_{s,t}\rightarrow\tilde{\Z}^\infty_{s,t}\right).$$
\end{definition}

\begin{remark}
    If $\F^\star M$ is upper half-plane or left half-plane, then the
    $\E^\infty$-page can be accessed in a slightly different way. For example,
    if $\F^\star M$ is $(-N)$-connective in the Beilinson $t$-structure (so it
    is upper half-plane), then
    for $n\geq N+t+1$, no differential
    can hit $\E^n_{s,t}$ so one obtains a decreasing sequence
    $$\cdots\subseteq\E^{N+t+3}_{s,t}\subseteq\E^{N+t+2}_{s,t}\subseteq\E^{N+t+1}_{s,t}$$
    of objects of $\Cscr^\heart$.  Since $\F^\star M$ is upper half-plane or left half-plane, for any pair
    $(s,t)$, the boundary sequence $\tilde{\B}^r_{s,t}$ stabilizes, which
    implies that the two definitions of the $\E^\infty$-page agree.
    It follows that $\E^\infty_{s,t}$ is the intersection of these objects.
\end{remark}

Recall that a decreasing system $\G^\star N$ of objects of $\Cscr$ (and in
particular $\Cscr^\heart$) is pro-zero if for each $n\in\bZ$ there is an
$m\geq n$ such that $\G^mN\rightarrow\G^nN$ is nullhomotopic (zero if in
$\Cscr^\heart$).

\begin{lemma}\label{lem:prozero}
    Let $\Cscr$ be a stable $\infty$-category with sequential limits. 
    If $\G^\star N$ is a pro-zero filtered object of $\Cscr$, then it is a
    complete filtration, i.e., $\lim_s\G^s N\we 0$.
\end{lemma}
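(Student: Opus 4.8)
The plan is to prove the limit vanishes by testing against mapping spectra, thereby reducing the question to a statement about pro-zero towers of abelian groups, where it becomes elementary. Write $L=\lim_s\G^s N$ for the sequential limit, which exists by hypothesis. I will show that $L$ is a zero object by showing that the mapping spectrum $\bMap_\Cscr(X,L)$ is contractible for every $X\in\Cscr$; taking $X=L$ then forces $\id_L\we 0$, whence $L\we 0$ since a stable $\infty$-category is pointed.

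First I would use that the mapping-spectrum functor $\bMap_\Cscr(X,-)\colon\Cscr\to\Sp$ is exact and preserves all limits which exist, so that
$$\bMap_\Cscr(X,L)\we\lim_s\bMap_\Cscr(X,\G^s N),$$
a sequential limit in $\Sp$. The key maneuver is that although $\Cscr$ is assumed only to admit sequential limits (not arbitrary products), the ambient category $\Sp$ has all limits, so the usual $\lim$--$\lim^1$ machinery is available there. Concretely, the Milnor exact sequence
$$0\to{\lim_s}^1\,\pi_{n+1}\bMap_\Cscr(X,\G^s N)\to\pi_n\bMap_\Cscr(X,L)\to\lim_s\pi_n\bMap_\Cscr(X,\G^s N)\to 0$$
computes the homotopy groups of the limit spectrum.

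Next I would observe that the pro-zero hypothesis passes to each homotopy-group tower. For fixed $X$ and $n$, the abelian groups $\pi_n\bMap_\Cscr(X,\G^s N)$ form a tower in $s$, and a transition map $\G^m N\to\G^n N$ that is nullhomotopic induces the zero map on $\pi_n\bMap_\Cscr(X,-)$. Hence $\{\pi_n\bMap_\Cscr(X,\G^s N)\}_s$ is a pro-zero tower of abelian groups. Such a tower has vanishing inverse limit (every compatible system has all of its entries in the image of a zero map, hence is zero) and satisfies the Mittag--Leffler condition trivially, so its $\lim^1$ vanishes as well. Both outer terms of the Milnor sequence therefore vanish for all $n$, giving $\bMap_\Cscr(X,L)\we 0$. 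As $X$ was arbitrary, $L\we 0$, which is exactly the assertion that $\G^\star N$ is complete.

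The main obstacle is precisely the gap between sequential limits and products: one cannot run the usual argument ``$1-\mathrm{shift}$ is invertible, so the limit is the fiber of an equivalence'' inside $\Cscr$ itself, because the fiber-sequence presentation of a tower limit requires the countable product $\prod_s\G^s N$, which need not exist in $\Cscr$. Passing to mapping spectra converts the problem into one in $\Sp$, where products do exist and the reduction to pro-zero towers of homotopy groups is routine. A variant that avoids the Milnor sequence is to first replace $\G^\star$ by a cofinal subtower $\G^{n_0}\leftarrow\G^{n_1}\leftarrow\cdots$ whose consecutive transition maps are nullhomotopic---such a subtower exists by iterating the pro-zero condition and absorbing the witnessing indices---and then to note that the induced tower of mapping spectra has nullhomotopic transition maps, so that $1-\mathrm{shift}$ is an equivalence and the limit spectrum vanishes directly.
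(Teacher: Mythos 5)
Your proof is correct and follows essentially the same route as the paper's: reduce to filtered spectra by applying $\bMap_\Cscr(X,-)$, observe that each tower $\{\pi_n\bMap_\Cscr(X,\G^sN)\}_s$ of abelian groups is pro-zero so that both $\lim_s$ and ${\lim_s}^1$ vanish by Mittag--Leffler, and conclude via the Milnor sequence in $\Sp$. Your extra remarks---the explicit Yoneda-style conclusion from $\bMap_\Cscr(X,L)\we 0$ for all $X$, and the variant using a cofinal subtower with nullhomotopic transition maps---are sound but do not change the argument in substance.
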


\begin{proof}
    Let $M\in\Cscr$. Then, $\F^\star X=\bMap_\Cscr(M,\F^\star N)$ is a pro-zero
    filtered spectrum and $\lim_s\F^s X=\bMap_\Cscr(M,\lim_s\F^s N)$. Since $M$
    was arbitrary, it follows that it is enough to show that $\lim_s\F^s X\we
    0$ if $\F^\star X$ is a pro-zero filtered spectrum. However, for each
    $n\in\bZ$, $\pi_n\F^\star X$ is a pro-zero filtered abelian group, so that
    ${\lim_s}^1\pi_n\F^s X=0=\lim_s\pi_n\F^s X$ by the Mittag--Leffler
    condition. By the Milnor sequence
    $$0\rightarrow{\lim_s}^1\pi_{n+1}\F^s
    X\rightarrow\pi_n\lim_s\F^sX\rightarrow\lim_s\pi_n\F^sX\rightarrow 0,$$
    which follows from the fact that the $t$-structure on spectra is compatible
    with countable products, it follows that $\pi_n\lim_s\F^sX\we 0$ for all
    $n$. In other words, $\lim_s\F^s X\we 0$.
\end{proof}

If $\Cscr$ is a stable $\infty$-category with sequential limits, derived limits
of objects of $\Cscr^\heart$ are always computed in $\Cscr$. This notion
depends on the ambient $\infty$-category $\Cscr$ and not just on the abelian
category $\Cscr^\heart$.

\begin{theorem}[Convergence theorem]\label{thm:convergence}
    Suppose that $\Cscr$ admits sequential limits and colimits and
    that the fixed $t$-structure on $\Cscr$ is compatible with sequential colimits.
    Let $\F^\star M$ be a filtration on an object $M$ of
    $\Cscr$.
    \begin{enumerate}
        \item[{\em (i)}] If $\F^\star M$ is an exhaustive filtration of $M$, then for each $n$ the induced filtration $\F^\star\pi_nM$ is
            exhaustive.
        \item[{\em (ii)}] If for some integer $n$ the pro-object $\{\pi_n\F^sM\}_s$ is
            pro-zero, then the filtration $\F^\star\pi_nM$ is pro-zero and
            derived complete.
        \item[{\em (iii)}] There are natural inclusions
            $$\gr^s\pi_nM\subseteq\E^\infty_{-s,n+s}$$
            for each $s$ and $n$. The inclusions are equalities if $\F^\star M$ is
            column-bounded, row-bounded, first quadrant, or third quadrant.
    \end{enumerate}
\end{theorem}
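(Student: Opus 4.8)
For \emph{(i)}, the only input is compatibility of the $t$-structure with sequential colimits: if $\F^\star M$ is exhaustive then $M\we\colim_s\F^s M$, so $\pi_n M\iso\colim_s\pi_n\F^s M$, a filtered colimit in $\Cscr^\heart$. Every element of a filtered colimit is represented at a finite stage, hence lies in $\F^s\pi_n M=\im(\pi_n\F^s M\to\pi_n M)$ for some $s$; thus $\pi_n M=\bigcup_s\F^s\pi_n M$, which is exhaustiveness of $\F^\star\pi_n M$. For \emph{(ii)}, I would first show $\F^s\pi_n M=0$ for $s\gg 0$. Fixing any $b$, pro-zeroness of $\{\pi_n\F^s M\}_s$ yields $a\geq b$ with $\pi_n\F^a M\to\pi_n\F^b M$ zero; since $\F^a M\to M$ factors through $\F^b M$, the map $\pi_n\F^a M\to\pi_n M$ is zero and $\F^a\pi_n M=0$, whence $\F^s\pi_n M=0$ for all $s\geq a$. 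An eventually-zero filtration is in particular pro-zero, so Lemma~\ref{lem:prozero} gives $\lim_s\F^s\pi_n M\we 0$, i.e.\ derived completeness.

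For \emph{(iii)} I would compute $\gr^s\pi_n M$ from the cofiber sequence $\F^{s+1}M\to\F^s M\to\gr^s M$. Writing $p\colon\pi_n\F^s M\to\pi_n\gr^s M$ for the projection, $i\colon\pi_n\F^{s+1}M\to\pi_n\F^s M$ for the transition, and $j_s\colon\pi_n\F^s M\to\pi_n M$ for the map to the abutment, exactness gives $\ker p=\im i$, and since $\F^s\pi_n M=\im j_s$ and $\F^{s+1}\pi_n M=j_s(\im i)$ a short computation identifies
$$\gr^s\pi_n M\iso\frac{\pi_n\F^s M}{\ker j_s+\im i}\iso\frac{\im p}{p(\ker j_s)}.$$
I would then match numerator and denominator against the cycle and boundary subobjects of Notation~\ref{not:boundaries}. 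Because $p$ factors through $\pi_n\gr^{[s,s+r)}_\F$ for every $r$, one gets $\im p\subseteq\tilde{\Z}^\infty_{-s,n+s}$; and, using that $M\we|\F^\star|$ (the running convention) together with compatibility with sequential colimits, so that $\tilde{\B}^\infty_{-s,n+s}\hookrightarrow\E^1_{-s,n+s}$, naturality of the connecting maps identifies $p(\ker j_s)$ with the infinite boundaries $\tilde{\B}^\infty_{-s,n+s}$.

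Granting these identifications, $\gr^s\pi_n M\iso\im p/\tilde{\B}^\infty_{-s,n+s}$ sits inside $\E^\infty_{-s,n+s}=\tilde{\Z}^\infty_{-s,n+s}/\tilde{\B}^\infty_{-s,n+s}$ as the subobject determined by $\im p\subseteq\tilde{\Z}^\infty_{-s,n+s}$, giving the natural inclusion. Equality then amounts to $\im p=\tilde{\Z}^\infty_{-s,n+s}$, i.e.\ every infinite cycle already lifts all the way to $\pi_n\F^s M$ rather than merely to $\pi_n(\F^s/\F^\infty)$; under the column-bounded, row-bounded, first-quadrant, or third-quadrant hypotheses the defining tower $\{\im(\pi_n\gr^{[s,s+r)}_\F\to\pi_n\gr^s M)\}_r$ stabilizes at a finite stage and the $\F^\infty$- and $\lim^1$-obstructions separating $\im p$ from $\tilde{\Z}^\infty_{-s,n+s}$ vanish, yielding the asserted equalities.

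I expect the main obstacle to be the precise identification of $\im p$ and $p(\ker j_s)$ with the cycle and boundary subobjects $\tilde{\Z}^\infty_{-s,n+s}$ and $\tilde{\B}^\infty_{-s,n+s}$ — in particular tracking the connecting maps through the nested graded pieces $\gr^{[a,b)}_\F$ and verifying that compatibility with sequential colimits (equivalently, that the filtration is exhaustive) is exactly what makes the boundary comparison an equality rather than a proper containment, since without it $\gr^s\pi_n M$ is only a subquotient of $\E^\infty_{-s,n+s}$. Once the cycles are seen to stabilize, the convergence case analysis for the four boundedness hypotheses is routine finite bookkeeping.
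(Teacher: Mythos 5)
Parts (i) and (ii) of your proposal are correct and essentially the paper's own arguments; your (ii) is even marginally sharper, since you observe that $\F^\star\pi_nM$ is eventually zero, while the paper deduces pro-zeroness from the levelwise surjection of pro-objects $\pi_n\F^\star M\rightarrow\F^\star\pi_nM$ --- the two come to the same thing because the transition maps of $\F^\star\pi_nM$ are monomorphisms. Your reduction in (iii) is also exactly the paper's: your $\im p$ and $p(\ker j_s)$ are the objects $Z$ and $B$ of the paper's three-row diagram, your formula $\gr^s\pi_nM\iso\im p/p(\ker j_s)$ is the exactness of its bottom row, and $\im p\subseteq\tilde{\Z}^\infty_{-s,s+n}$ is argued the same way. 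The genuine gap is the step you flag yourself: the identification of $p(\ker j_s)$ with $\tilde{\B}^\infty_{-s,s+n}$ is asserted by appeal to ``naturality of the connecting maps'' but never argued, and this is precisely where the paper spends the bulk of its proof. There it takes the form of the claim that $B$ is the pullback of $\tilde{\B}^\infty_{-s,s+n}\hookrightarrow\tilde{\Z}^\infty_{-s,s+n}\hookleftarrow Z$, proved by a correction argument (the objects $C$, $\tilde{C}$ and the map $g-f$): given a class in $\pi_n\F^sM$ whose image in $\pi_n\gr^sM$ is a stage-$N$ boundary, one must subtract a class coming from $\pi_n\F^{s+1}M$ to force it into $B^N=\ker(\pi_n\F^sM\rightarrow\pi_n\F^{s-N}M)$. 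Without this step you do not even get the asserted inclusion: knowing only the easy containment $B\subseteq\tilde{\B}^\infty_{-s,s+n}\cap Z$ yields a surjection from $\gr^s\pi_nM$ onto a subobject of $\E^\infty_{-s,s+n}$, not an injection into it.

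That said, your claimed identity is true and your intended mechanism can be made to work, in fact more directly than the paper's pullback argument; two observations close the gap. First, $\ker j_s=\bigcup_N B^N$, which is where compatibility of the $t$-structure with sequential colimits and the running convention $M\we|\F^\star|$ enter (your parenthetical conflates these: exhaustiveness of the filtration and colimit-compatibility of the $t$-structure are independent hypotheses, and both are used). Second, $p(B^N)=\ker\bigl(\pi_n\gr^sM\rightarrow\pi_n\gr^{[s-N,s+1)}M\bigr)$, the finite-stage boundary subobject, on the nose. One inclusion is the commutative square comparing $\F^s\rightarrow\F^{s-N}$ with $\gr^s\rightarrow\gr^{[s-N,s+1)}$, which the paper also records. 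For the other, the connecting map $\gr^{[s-N,s)}\rightarrow\gr^s[1]$ of the cofiber sequence $\gr^s\rightarrow\gr^{[s-N,s+1)}\rightarrow\gr^{[s-N,s)}$, whose image on $\pi_{n+1}$ is that boundary subobject, factors as $\gr^{[s-N,s)}\rightarrow\F^s[1]\rightarrow\gr^s[1]$ through the connecting map of $\F^s\rightarrow\F^{s-N}\rightarrow\gr^{[s-N,s)}$ (naturality of connecting maps applied to quotienting by $\F^{s+1}$), and the resulting lift lands in $B^N$ because consecutive maps in a long exact sequence compose to zero. Taking the union over $N$ gives $p(\ker j_s)=\tilde{\B}^\infty_{-s,s+n}$; note this factorization also shows each finite-stage boundary lies in $Z$, so your identity is equivalent to the paper's pullback claim. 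With this supplied, your treatment of the four equality cases --- surjectivity of $Z\rightarrow\tilde{\Z}^\infty_{-s,s+n}$ because the towers of truncated graded pieces stabilize --- is at the same level of detail as the paper's one-line justification, and the proof closes.
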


\begin{proof}
    The fact that $\F^\star\pi_nM$ is exhaustive follows from the compatibility
    of the $t$-structure on $\Cscr$ with sequential colimits and the fact that
    $\F^\star M$ is exhaustive, so $\colim_s\F^sM\we M$. This proves (i).

    For (ii), since $\pi_n\F^\star M\rightarrow\F^\star\pi_nM$ is a surjection of
    pro-objects of $\Cscr^\heart$ and since $\pi_n\F^\star M$ is pro-zero, it
    follows that $\F^\star\pi_nM$ is pro-zero and in particular derived
    complete by Lemma~\ref{lem:prozero}.

    Now, consider the commutative diagram
    $$\xymatrix{
        &&&0\ar[d]\\
        0\ar[r]&\ker(\pi_n\F^{s+1}M\rightarrow\pi_nM)\ar[r]\ar[d]&\pi_n\F^{s+1}M\ar[r]\ar[d]&\F^{s+1}\pi_nM\ar[r]\ar[d]&0\\
        0\ar[r]&\ker(\pi_n\F^sM\rightarrow\pi_nM)\ar[r]\ar[d]&\pi_n\F^sM\ar[r]\ar[d]&\F^s\pi_nM\ar[r]\ar[d]&0\\
        0\ar[r]&B\ar[r]\ar[d]&Z\ar[d]\ar[r]&\frac{\F^s\pi_nM}{\F^{s+1}\pi_nM}\ar[d]\ar[r]&0\\
        &0&0&0
    }$$
    of exact sequences, where $B$ and $Z$ are defined to be the vertical cokernels of
    the upper left square.
    This implies that $B$ and $Z$ are subobjects of $\pi_n\gr^sM$ and in fact they
    are subobjects of $\tilde{\Z}^\infty_{-s,s+n}$. Indeed,
    $Z\subseteq\tilde{\Z}^\infty_{-s,s+n}$ is the subobject of permanent cycles
    that actually lift to $\pi_n\F^sM$.
    
    The kernel $\ker(\pi_n\F^sM\rightarrow\pi_nM)$
    is the union of the kernels of $\pi_n\F^sM\rightarrow\pi_n\F^{s-N}M$ since
    the $t$-structure is compatible with sequential colimits. The commutative
    diagram
    $$\xymatrix{
        \pi_n\F^{s+1}M\ar[r]\ar[d]&\pi_n\F^{s+1}M\ar[d]\\
        \pi_n\F^sM\ar[r]\ar[d]&\pi_n\F^{s-N}M\ar[d]\\
        \pi_n\gr^sM\ar[r]&\pi_n\gr^{[s-N,s+1)}M,
    }$$ in which the columns are exact, implies that $\ker(\pi_n\F^sM\rightarrow\pi_nM)$ maps to
    $\ker(\pi_n\gr^sM\rightarrow\pi_n\gr^{[s-N,s+1)}M)$, which is
    $\tilde{\B}^n_{-s,s+n}$.
    It follows there is a commutative diagram of
    exact sequences
    $$\xymatrix{
        &0\ar[d]&0\ar[d]\\
        0\ar[r]&B\ar[r]\ar[d]&Z\ar[d]\\
        0\ar[r]&\tilde{\B}^\infty_{-s,s+n}\ar[r]&\tilde{\Z}^{\infty}_{-s,s+n}.
    }$$
    This diagram presents $B$ as a pullback.
    To prove this, write $B^N=\ker(\pi_n\F^sM\rightarrow\pi_n\F^{s-N}M)$. Consider the
    intersection $\B^N_{-s,s+n}\cap Z\subseteq\pi_n\gr^sM$ and let $C$ be the
    inverse image of the intersection in $\pi_n\F^sM$. By construction, the map
    $C\hookrightarrow\pi_n\F^sM\rightarrow\pi_n\F^{s-N}M\rightarrow\gr^{[s-N,s+1)}M$
    maps to zero, so $C$ maps to
    $\im(\pi_n\F^{s+1}M\rightarrow\pi_n\F^{s-N}M)$ in $\pi_n\F^{s-N}M$. Let $\tilde{C}$ be the
    pullback in
    $$\xymatrix{
        \tilde{C}\ar[r]\ar[d]&C\ar[d]\\
        \pi_n\F^{s+1}M\ar[r]&\im(\pi_n\F^{s+1}M\rightarrow\pi_n\F^{s-N}M).
    }$$
    Since the bottom map is an epimorphism, so is $\tilde{C}\rightarrow C$.
    Let $f$ be the composition $\tilde{C}\rightarrow
    C\hookrightarrow\pi_n\F^sM$ and
    let $g$ be the composition
    $\tilde{C}\rightarrow\pi_n\F^{s+1}M\rightarrow\pi_n\F^sM$.
    Consider the map $\tilde{C}\xrightarrow{g-f}\pi_n\F^sM$. By construction,
    $g-f$ maps $\tilde{C}$ into $B^N$. The kernel of $\tilde{C}\rightarrow C$
    is isomorphic to $\ker(\pi_n\F^{s+1}M\rightarrow\pi_n\F^{s-N}M)$, which
    maps to zero in $\pi_n\gr^sM$ (using any of the maps $f$, $g$, or $g-f$). Thus, there is a commutative square
    $$\xymatrix{
        \tilde{C}\ar[r]\ar[d]^{g-f}&C\ar[d]\\
        B^N\ar[r]&\B^N_{-s,s+n}\cap Z.
    }$$
    The top and right arrows are surjective and hence the bottom arrow is
    surjective as well. Since the $t$-structure is compatible with sequential
    colimits, $B\iso\cup\B^N\rightarrow\cup(\B^N_{-s,s+n}\cap
    Z)\iso(\cup\B^n_{-s,s+n})\cap Z$ is a surjection.

    It follows that the induced map
    $Z/B\rightarrow\tilde{\Z}^\infty_{-s,s+n}/\tilde{\B}^\infty_{-s,s+n}\iso\E^\infty_{-s,s+n}$
    is injective. It is an isomorphism if $Z\rightarrow\tilde{\Z}^{\infty}_{-s,s+n}$ is
    surjective. This is the case if $\F^\star M$ is column-bounded,
    row-bounded, first quadrant, or third quadrant since in those cases
    $\pi_n\F^sM\rightarrow\pi_n\F^{[s,s+N+1)}M$ is an isomorphism for large $N$.
    This proves (iii).
\end{proof}

\section{Indexing conventions}\label{sec:reindexing}

It should be understood that the choice of $\E^1_{s,t}=\pi_{s+t}\gr^{-s}M$ is
entirely a convention. One could make many other choices. This one is
the Serre grading convention.

\begin{figure}[b]
    \centering
    \makebox[\textwidth]{
        \begin{tabular}{||l l l l l||}
            \hline
            $\pi_*\F^\star
            M$&$\E^1_{s,t}=\pi_{s+t}\gr^{-s}M\Rightarrow\pi_{s+t}M$&$(-r,r-1)$&$\F^s{\pi_nM}=\im(\pi_n\F^sM\rightarrow\pi_nM)$&$\gr^s\pi_nM\iso\E^\infty_{-s,s+n}$\\
            \hline
            $\pi_*\F_\star
            M$&$\E^1_{s,t}=\pi_{s+t}\gr_sM\Rightarrow\pi_{s+t}M$&$(-r,r-1)$&$\F_s\pi_nM=\im(\pi_n\F_sM\rightarrow\pi_nM)$&$\gr_s\pi_nM\iso\E^\infty_{s,-s+n}$\\
            \hline
            $\H^*\F^\star
            M$&$\E_1^{s,t}=\H^{s+t}\gr^sM\Rightarrow\H^{s+t}M$&$(r,-r+1)$&$\F^s\H^nM=\im(\H^n\F^sM\rightarrow\H^nM)$&$\gr^s\H^nM\iso\E_\infty^{s,-s+n}$\\
            \hline
            $\H^*\F_\star
            M$&$\E_1^{s,t}=\H^{s+t}\gr_{-s}M\Rightarrow\H^{s+t}M$&$(r,-r+1)$&$\F_s\H^nM=\im(\H^n\F_sM\rightarrow\H^nM)$&$\gr_s\H^nM\iso\E_\infty^{-s,s+n}$\\
            \hline
        \end{tabular}
    }
    \caption{Spectral sequence conventions for decreasing filtrations $\F^\star$ and increasing
    filtrations $\F_\star$. We use $\Rightarrow$ even when no convergence is
    implied.}
    \label{fig:ssconventions}
\end{figure}

Sometimes it is useful to reindex a spectral sequence to start at the second
page instead of the first, in which case we write the resulting spectral sequence as $'\E^2$. This is a purely psychological move, but it leads to
some useful comparisons as one can see in
Section~\ref{sec:mysterious}. The matrices to go back and forth between the
$\E^1$ and $\E^2$-conventions are
$$\begin{pmatrix}0&-1\\1&2\end{pmatrix}\colon\text{from $\E^2$ to
    $\E^1$}\quad\text{and}\quad\begin{pmatrix}2&1\\-1&0\end{pmatrix}\colon\text{from $\E^1$ to $\E^2$}.$$

        \begin{figure}[b!]
    \centering
    \makebox[\textwidth]{
        \begin{tabular}{||l l l||}
            \hline
            $\pi_*\F^\star
            M$&$'\E^2_{s,t}=\E^1_{-t,s+2t}=\pi_{s+t}\gr^{t}M\Rightarrow\pi_{s+t}M$&$\gr^s\pi_nM\iso\E^\infty_{-s+n,s}$\\
            \hline
            $\pi_*\F_\star
            M$&$'\E^2_{s,t}=\E^1_{-t,s+2t}=\pi_{s+t}\gr_{-t}M\Rightarrow\pi_{s+t}M$&$\gr_s\pi_nM\iso\E^\infty_{s+n,-s}$\\
            \hline
            $\H^*\F^\star
            M$&$'\E_2^{s,t}=\E_1^{-t,s+2t}=\H^{s+t}\gr^{-t}M\Rightarrow\H^{s+t}M$&$\gr^s\H^nM\iso\E_\infty^{s+n,-s}$\\
            \hline
            $\H^*\F_\star
            M$&$'\E_2^{s,t}=\E_1^{-t,s+2t}=\H^{s+t}\gr_{t}M\Rightarrow\H^{s+t}M$&$\gr_s\H^nM\iso\E_\infty^{-s+n,s}$\\
            \hline
        \end{tabular}
    }
    \caption{Reindexing to $'\E^2$. The differentials have the same bidegree as
    in the $\E^1$-spectral sequences from Figure~\ref{fig:cover_ssconventions}. The
    filtrations on the abutment are also defined in the same way as in the $\E^1$ spectral
    sequences.}
    \label{fig:E2ssconventions}
\end{figure}

There is also a common reindexing in homotopy theory called the Adams grading
convention.
The beautiful charts one sees of the Adams spectral sequence are all given in
this convention. The matrices to go back and forth between the Serre and Adams
grading conventions are
$$\begin{pmatrix}0&-1\\1&1\end{pmatrix}\colon\text{from Adams to
    Serre}\quad\text{and}\quad\begin{pmatrix}1&1\\-1&0\end{pmatrix}\colon\text{from Serre to Adams}.$$
Note that Adams spectral sequences are not spectral sequences if one defines a
spectral sequence to have differentials of a certain bidegree.

The main advantage of Adams indexing is that all contributions to $\pi_n$ of
the abuttment are in the $n$th column and all differentials leaving or entering
this column relate to the $(n+1)$st or $(n-1)$st column. This makes displaying
spectral sequences more compact in some situations.

        \begin{figure}[b!]
    \centering
    \makebox[\textwidth]{
        \begin{tabular}{||l l l l l||}
            \hline
            $\pi_*\F^\star
            M$&$\E^1_{s,t}=\pi_{s}\gr^{t}M\Rightarrow\pi_{s}M$&$(-1,r)$&$\F^s{\pi_nM}=\im(\pi_n\F^sM\rightarrow\pi_nM)$&$\gr^s\pi_nM\iso\E^\infty_{n,s}$\\
            \hline
            $\pi_*\F_\star
            M$&$\E^1_{s,t}=\pi_{s}\gr_{-t}M\Rightarrow\pi_{s}M$&$(-1,r)$&$\F_s\pi_nM=\im(\pi_n\F_sM\rightarrow\pi_nM)$&$\gr_s\pi_nM\iso\E^\infty_{n,-s}$\\
            \hline
            $\H^*\F^\star
            M$&$\E_1^{s,t}=\H^{s}\gr^{-t}M\Rightarrow\H^{s}M$&$(1,-r)$&$\F^s\H^nM=\im(\H^n\F^sM\rightarrow\H^nM)$&$\gr^s\H^nM\iso\E_\infty^{n,-s}$\\
            \hline
            $\H^*\F_\star
            M$&$\E_1^{s,t}=\H^{s}\gr_{t}M\Rightarrow\H^{s}M$&$(1,-r)$&$\F_s\H^nM=\im(\H^n\F_sM\rightarrow\H^nM)$&$\gr_s\H^nM\iso\E_\infty^{n,s}$\\
            \hline
        \end{tabular}
    }
    \caption{Adams indexing spectral sequence conventions.}
    \label{fig:adamsssconventions}
\end{figure}

\section{Multiplicativity}\label{sec:multiplicativity}

Compare the material in this section to related results in Hedenlund's thesis~\cite{hedenlund-thesis}.

\begin{definition}
    The $\E^r$-page construction defines a functor
    $$\E^r\colon\F\Cscr\rightarrow\Gr(\Ch^\bullet(\Cscr^\heart))$$ from the $\infty$-category
    of filtrations in $\Cscr$ to the $1$-category of graded objects in cochain
    complexes in $\Cscr^\heart$. The weight $n$ piece of the
    $\E^r$-page is the cochain complex is the weight $n$ piece of the $\E^1$-page of
    $\Dec^{(r-1)}(\F)$, which is $$\E^1_{-\bullet,n}(\Dec^{(r-1)}(\F)).$$ Under the regrading, this
    corresponds to
    $$\E^r_{(r-1)n-r\cs,-(r-2)n+(r-1)\cs},$$
    which is the line of slope $\frac{1-r}{r}=-1+\frac{1}{r}$ through the point
    of height $n-\frac{n}{r}$ on the $t$-axis. This cochain complex is indexed
    so that the cohomological degree $0$ term is $\E^r_{(r-1)n,-(r-2)n}$ and the
    cohomological degree $1$ term is $\E^r_{(r-1)n-r,-(r-2)n+(r-1)}$.
    Note that the cohomological degree $n$ term in weight $n$ is $\E^r_{-n,n}$.
    See Figure~\ref{fig:weight1generalss} for an illustration of the weight $1$
    term on the first three pages of a general spectral sequence.
\end{definition}

The choice of indexing for the $\E^r$-page is not arbitrary but is enforced by the
d\'ecalage approach to spectral sequences; see Theorem~\ref{thm:decalage}.
We note the following lemma for the readers convenience.

\begin{lemma}\label{lem:lookup}
    The term $\E^r_{s,t}$ is the cohomological degree $(r-2)s+(r-1)t$ term in
    weight $(r-1)s+rt$. 
\end{lemma}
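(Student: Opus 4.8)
The plan is to prove the lemma by straightforward linear inversion of the indexing correspondence set up in the definition immediately preceding it. That definition identifies, for each weight $n$, the weight-$n$ piece of the $\E^r$-page with the cochain complex $\E^1_{-\cs,n}(\Dec^{(r-1)}(\F))$ and records that its term in cohomological degree $c$ is $\E^r_{(r-1)n-rc,\,-(r-2)n+(r-1)c}$. First I would extract from this the explicit affine-linear map sending a pair $(n,c)$ of (weight, cohomological degree) to the Serre-graded coordinates $(s,t)$, namely
$$s = (r-1)n - rc, \qquad t = -(r-2)n + (r-1)c.$$
The two displayed special cases $c=0$ and $c=1$ in the definition pin down this map together with linearity: incrementing $c$ by $1$ shifts $(s,t)$ by $(-r,r-1)$, which is exactly the bidegree of the $\d^r$-differential, while incrementing $n$ by $1$ at fixed $c$ shifts $(s,t)$ by $(r-1,-(r-2))$.

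Next I would solve this system for $(n,c)$ in terms of $(s,t)$. Writing the map as a matrix product with
$$M = \begin{pmatrix} r-1 & -r \\ -(r-2) & r-1 \end{pmatrix},$$
the key computation is that $\det M = (r-1)^2 - r(r-2) = 1$. This is the only real point in the proof: unimodularity guarantees that $M^{-1}$ again has integer entries, so that the inverse assignment lands in integer weights and cohomological degrees, as it must. Inverting gives
$$M^{-1} = \begin{pmatrix} r-1 & r \\ r-2 & r-1 \end{pmatrix},$$
whence $n = (r-1)s + rt$ and $c = (r-2)s + (r-1)t$, which is precisely the assertion of the lemma.

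There is essentially no obstacle here; the content is bookkeeping and the proof is a two-by-two matrix inversion. The only things one must take care with are the conventions carried over from the preceding definition: that the weight is the second $\E^1$-index $n$ of $\Dec^{(r-1)}(\F)$, while the cohomological degree $c$ enters through the first index via the sign convention matching $\E^1_{\cs,t}=\pi^\B_t(\F)^{-\cs}$ of Definition~\ref{def:e1}. Alternatively, one can avoid recomputing $M^{-1}$ by reindexing Corollary~\ref{cor:decalage} (replacing $r$ by $r-1$) to read off directly that $\E^r_{s,t}(\F)\iso\E^1_{-(r-2)s-(r-1)t,\,(r-1)s+rt}(\Dec^{(r-1)}(\F))$, and then matching the first and second coordinates against $-c$ and $n$ respectively; this route makes compatibility with the definition manifest but amounts to the same linear algebra.
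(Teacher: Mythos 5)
Your proof is correct and is exactly the computation the paper leaves implicit (the lemma is stated without proof as a direct consequence of the preceding definition): one inverts the unimodular reindexing $(n,c)\mapsto\bigl((r-1)n-rc,\,-(r-2)n+(r-1)c\bigr)$, and your inverse matrix and resulting formulas $n=(r-1)s+rt$, $c=(r-2)s+(r-1)t$ are right, consistent with the checks that $\d^r$ of bidegree $(-r,r-1)$ preserves weight and raises cohomological degree by $1$. Your alternative route through Corollary~\ref{cor:decalage} with $r$ replaced by $r-1$ is likewise valid and amounts to the same linear algebra.
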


\begin{sseqdata}[name = generalss, homological Serre grading, classes = {draw = none } ]
    \foreach \x in {-7,...,3} {
        \foreach \y in {-2,...,6} {
            \class["\bullet"](\x,\y)
        }
    }
    \foreach \x in {-4,...,0} {
        \foreach \y in {0,...,4} {
            \d1(\x,\y)
            \replacetarget["\bullet"]
        }
    }

    \foreach \y in {0,...,4} {
        \d1(1,\y)
        \replacetarget["\bullet"]
        \replacesource["\bullet"]
    }

    \foreach \x in {-4,...,0} {
        \foreach \y in {0,...,4} {
            \d2(\x,\y)
            \replacetarget["\bullet"]
        }
    }

    \foreach \x in {-4,...,0} {
        \d2(\x,-1)
        \replacetarget["\bullet"]
        \replacesource["\bullet"]
    }

    \foreach \x in {1,2} {
        \foreach \y in {-1,...,4} {
            \d2(\x,\y)
            \replacetarget["\bullet"]
            \replacesource["\bullet"]
        }
    }
    \foreach \x in {-4,...,3} {
        \foreach \y in {0,...,4} {
            \d3(\x,\y)
        }
    }
    \foreach \x in {-1,0,1,2,3} {
        \d3(\x,-2)
        \d3(\x,-1)
    }
    
    \d3(-2,-2)
    \d3(-2,-1)
    \d3(-3,-1)
\end{sseqdata}

\begin{figure}[h]
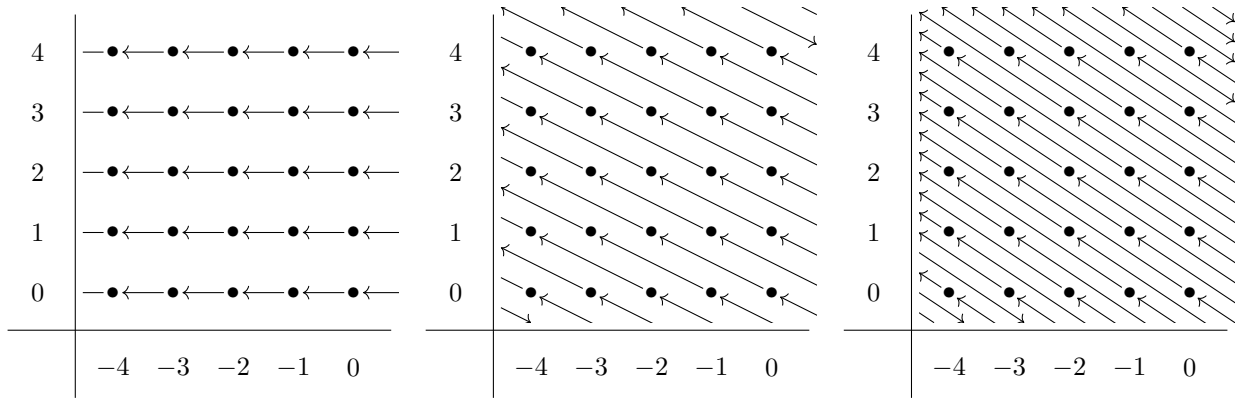

    \centering
    \printpage[name = generalss, page = 1,xscale=0.8,yscale=0.8,x range = {-4}{0},y range = {0}{4}]
    \quad
    \printpage[name = generalss, page = 2,xscale=0.8,yscale=0.8,x range = {-4}{0},y range = {0}{4}]
    \quad
    \printpage[name = generalss, page = 3,xscale=0.8,yscale=0.8,x range = {-4}{0},y range = {0}{4}]
    \caption{The $\E_1$, $\E_2$, and $\E_3$-pages of a general spectral sequence.}
    \label{fig:generalss}
\end{figure}

\begin{sseqdata}[name = weight1page1, homological Serre grading, classes = {draw = none } ]
    \class["\bullet"](-3,1)
    \class["\bullet"](-2,1)
    \class["\bullet"](-1,1)
    \class["\circ"](0,1)
    \class["\bullet"](1,1)
    \class["\bullet"](2,1)
    \class["\bullet"](3,1)
    \foreach \x in {-2,...,3} {
        \d1(\x,1)
    }
\end{sseqdata}

\begin{sseqdata}[name = weight1page2, homological Serre grading, classes = {draw = none } ]
    \class["\circ"](1,0)
    \class["\bullet"](-1,1)
    \class["\bullet"](-3,2)
    \class["\bullet"](-5,3)
    \class["\bullet"](3,-1)
    \class["\bullet"](5,-2)
    \class["\bullet"](7,-3)

    \d2(7,-3)
    \d2(5,-2)
    \d2(3,-1)
    \d2(1,0)
    \d2(-1,1)
    \d2(-3,2)
\end{sseqdata}

\begin{sseqdata}[name = weight1page3, homological Serre grading, classes = {draw = none } ]
    \class["\circ"](2,-1)
    \class["\bullet"](-1,1)
    \class["\bullet"](-4,3)
    \class["\bullet"](-7,5)
    \class["\bullet"](5,-3)
    \class["\bullet"](8,-5)
    \class["\bullet"](11,-7)

    \d3(11,-7)
    \d3(8,-5)
    \d3(5,-3)
    \d3(2,-1)
    \d3(-1,1)
    \d3(-4,3)
\end{sseqdata}

\begin{figure}[h!]
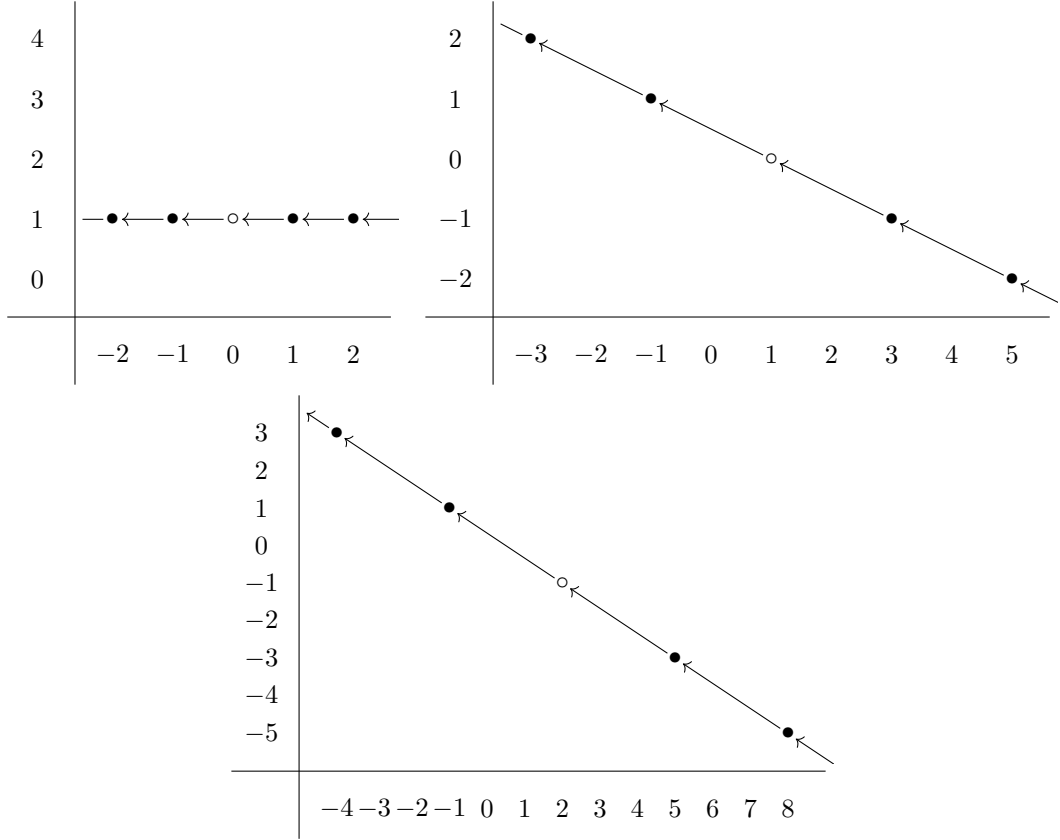

    \centering
    \printpage[name = weight1page1, page = 1,xscale=0.8,yscale=0.8,x range = {-2}{2},y range = {0}{4}]
    \quad
    \printpage[name = weight1page2, page = 2,xscale=0.8,yscale=0.8,x range = {-3}{5},y range = {-2}{2}]
    \quad
    \printpage[name = weight1page3, page = 3,xscale=0.5,yscale=0.5,x range = {-4}{8},y range = {-5}{3}]
    \caption{The weight $1$ parts of the $\E_1$, $\E_2$, and $\E_3$-pages of a general spectral
    sequence. The $\circ$ class is degree $0$ of the cochain complex.}
    \label{fig:weight1generalss}
\end{figure}

The reader is
referred to~\cite[Chap.~2]{ha} for all conventions about symmetric monoidal structures
and the related theory of $\infty$-operads.

For a symmetric monoidal structure $\Cscr^\otimes$ on an
$\infty$-category $\Cscr$, the tensor product of two objects $X$ and $Y$ is
written $X\otimes_\Cscr Y$. In the case where $\Cscr=\Perf(k)^\otimes$ or $\Cscr=\D(k)^\otimes$
for an $\bE_\infty$-ring $k$, this notation will be simplified to $X\otimes_kY$.

Let $\Cscr$ be a stable $\infty$-category.
If $\Cscr$ admits both a symmetric monoidal structure $\Cscr^\otimes$ such that
the tensor product is exact in each variable and a
$t$-structure $(\Cscr_{\geq 0},\Cscr_{\leq 0})$, the
$t$-structure is said to be compatible with the symmetric monoidal structure if
$\Cscr_{\geq 0}$ contains the unit object $\1_\Cscr$ and is closed under tensor
products. In this case, the prestable $\infty$-category $\Cscr_{\geq 0}$ inherits a symmetric monoidal
structure $\Cscr_{\geq 0}^\otimes$ and the inclusion $\Cscr_{\geq 0}\subseteq\Cscr$ is symmetric
monoidal.

Suppose that $\Cscr$ is a stable $\infty$-category with a symmetric monoidal structure
$\Cscr^\otimes$ which is exact in each variable and suppose also that $\Cscr$ admits sequential
colimits and that the tensor product preserves sequential colimits in each variable.
Using that $\bZ^\op$ naturally admits a symmetric monoidal structure given by
addition of integers, there is an induced Day convolution symmetric
monoidal structure $\F\Cscr^\otimes$ on the $\infty$-category
$\Fun(\bZ^\op,\Cscr)$ of decreasing filtrations in $\Cscr$. This is constructed
in general in the $\infty$-categorical context by Glasman~\cite{glasman}; see
also~\cite[Ex.~2.2.6.17]{ha}.
To describe the tensor product of any given pair of decreasing filtrations
$\F^\star$ and $\G^\star$, one considers the left Kan extension
$$\xymatrix{
    \bZ^\op\times\bZ^\op\ar[rrr]^{(s,s')\mapsto\F^s\otimes_\Cscr\F^{s'}}\ar[d]^+&&&\Cscr\\
    \bZ^\op.\ar@{.>}[urrr]
}$$
The left Kan extension exists by our assumption that $\Cscr$ admits sequential colimits and hence
all countable colimits since $\Cscr$ is stable.

The constant filtrations $\Cscr\subseteq\F\Cscr$ form a $\otimes$-ideal in
$\F\Cscr$ since if $\F^\star$ is the constant filtration on $M$, then
$\F^\star\otimes_{\F\Cscr}\G^\star$ is the constant filtration on
$M\otimes_\Cscr\G^{-\infty}$, as one sees from the left Kan extension
definition above. It follows that if $\Cscr$ admits sequential limits, then
$\widehat{\F}\Cscr$ admits a symmetric monoidal structure
$\widehat{\F}\Cscr^\otimes$. The tensor product here is the completed tensor
product $\widehat{\otimes}$ obtained by computing the tensor product in $\F\Cscr$
and then completing.

\begin{warning}
    The category $\Xi^\op$ which classifies cochain complexes is also symmetric
    monoidal. However, the induced Day convolution symmetric monoidal structure
    on $\widehat{\F}\Cscr\we\Fun(\Xi^\op,\Cscr)$ does not agree with the
    symmetric monoidal structure on $\widehat{\F}\Cscr$ induced by completing
    the Day convolution symmetric monoidal structure on $\Fun(\bZ^\op,\Cscr)$.
    Indeed, as pointed out by Thomas Nikolaus, this does not even work in
    the $1$-categorical situation of ordinary cochain complexes of abelian
    groups. The unit object of $\Fun(\Xi^\op,\Ab)$ is the cochain complex
    $\cdots\rightarrow0\rightarrow\bZ\xrightarrow{\id}\bZ\rightarrow
    0\rightarrow \cdots$, where the non-trivial entries are in cohomological degrees $0$ and $1$.
\end{warning}

Left Kan extension along $\bZ^\delta\rightarrow\bZ^\op$ induces a symmetric
monoidal functor $\ins^\star\colon\Gr\Cscr^\otimes\rightarrow\F\Cscr^\otimes$.

\begin{proposition}\label{prop:grtensor}
    The associated graded functor $\F\Cscr\rightarrow\Gr\Cscr$ naturally upgrades
    to a symmetric monoidal functor
    $\F\Cscr^\otimes\rightarrow\Gr\Cscr^\otimes$ and the induced composition
    $\Gr\Cscr^\otimes\rightarrow\F\Cscr^\otimes\rightarrow\Gr\Cscr^\otimes$ is
    equivalent to the identity as a symmetric monoidal functor.
\end{proposition}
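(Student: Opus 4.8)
The plan is to build the symmetric monoidal refinement of $\gr^\star$ from the functoriality of Day convolution together with the universal property of $\F\Cscr^\otimes$ as a presentably generated symmetric monoidal $\infty$-category, rather than by exhibiting the Künneth isomorphism $\gr^n(\F^\star\otimes\G^\star)\we\bigoplus_{a+b=n}\gr^a\F\otimes\gr^b\G$ by hand, which would only produce the underlying natural transformation and leave the coherences unverified. Recall that $\ins^\star=i_!$ is left Kan extension along the symmetric monoidal inclusion $i\colon\bZ^\delta\rightarrow\bZ^\op$, so it is symmetric monoidal, and that its right adjoint $i^*\colon\F\Cscr\rightarrow\Gr\Cscr$ (forgetting the transition maps) is only lax symmetric monoidal. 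The functor $\gr^\star$ is neither of these, but it preserves all colimits, since $\gr^n\F=\cofib(\F^{n+1}\rightarrow\F^n)$ is computed by evaluation followed by a cofiber, both colimit-preserving; it is moreover $\Cscr$-linear for the degreewise $\Cscr$-actions, as $\gr^k(\ins^0 X\otimes\F)\we X\otimes\gr^k\F$.

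First I would invoke the universal property of the Day convolution: as a $\Cscr$-linear presentable symmetric monoidal $\infty$-category, $\F\Cscr^\otimes\we\Fun(\bZ^\op,\Cscr)^\otimes$ is generated under colimits and the $\Cscr$-action by the monoidal image of $\bZ^\op$, concretely by the objects $\ins^n\1$, with $\ins^n\1\otimes_{\F\Cscr}\ins^m\1\we\ins^{n+m}\1$ since $i_!$ is symmetric monoidal and $n+m$ is the product in $(\bZ^\op,+)$. A direct computation gives $\gr^\star(\ins^n\1)\we\1_\Cscr$ placed in weight $n$, and sends the canonical map $\ins^m\1\rightarrow\ins^n\1$ for $m\geq n$ to the zero map, there being no nonzero maps between distinct weights in $\Gr\Cscr$. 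Thus the restriction of $\gr^\star$ to the generators is the symmetric monoidal assignment carrying the generator in degree $n$ to $\1_\Cscr$ in weight $n$ and killing all nonidentity morphisms; this is compatible with products, as $\1_\Cscr$ in weight $n$ tensored with $\1_\Cscr$ in weight $m$ is $\1_\Cscr$ in weight $n+m$ for the Day convolution on $\Gr\Cscr$. By the universal property (see~\cite{glasman} and~\cite[Ex.~2.2.6.17]{ha}), the $\Cscr$-linear colimit-preserving functor $\gr^\star$ therefore upgrades uniquely to a symmetric monoidal functor $\F\Cscr^\otimes\rightarrow\Gr\Cscr^\otimes$.

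For the final assertion, both $\gr^\star\circ\ins^\star$ and $\id_{\Gr\Cscr}$ are $\Cscr$-linear, colimit-preserving, symmetric monoidal endofunctors of $\Gr\Cscr^\otimes$. On the monoidal generators they agree: $\gr^\star(\ins^n\1)\we\1_\Cscr$ in weight $n$ from the computation above, and this identification is compatible with products and with the (trivial) morphisms. By the same universal property, the space of symmetric monoidal natural transformations between two such functors is computed by their restrictions to the generators, so the underlying equivalence $\gr^\star\circ\ins^\star\we\id_{\Gr\Cscr}$ refines to an equivalence of symmetric monoidal functors.

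The main obstacle is precisely the coherence: producing a genuine strong symmetric monoidal structure, with all of its higher compatibilities, in place of the evident degreewise isomorphism. The device that removes it is the functoriality and universal property of Day convolution, which reduces every coherence to the monoidal subcategory of generators $\{\ins^n\1\}$, where the content is the identity $n+m=m+n$. As a consistency check one may still compute the underlying Künneth isomorphism directly from the colimit formula for the Day convolution, using that for a decreasing filtration the relevant colimit $\colim_{s+s'\geq n}\F^s\otimes\G^{s'}$ is computed at the bottom index, recovering $\gr^n(\F\otimes\G)\we\bigoplus_{a+b=n}\gr^a\F\otimes\gr^b\G$ and confirming that the second composite in the statement is the identity.
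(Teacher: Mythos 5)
There is a genuine gap, and it sits exactly where the paper warns it would: the paper's own ``proof'' is a citation to \cite[Prop.~3.2.1]{lurie-rotation} prefaced by the explicit remark that ``this is not a formal result,'' whereas your argument is entirely formal, and the non-formal content is hiding in the step you dismiss as ``the identity $n+m=m+n$.'' Granting the universal property of Day convolution, what you must actually produce is a \emph{symmetric monoidal} functor $\bZ^\op\rightarrow\Gr\Cscr^\otimes$ carrying $n$ to $\1_\Cscr$ in weight $n$ and every transition map to zero. You verify only the binary compatibility $\1(n)\otimes\1(m)\we\1(n+m)$ on objects; but a symmetric monoidal functor requires these equivalences to be natural in $(n,m)\in\bZ^\op\times\bZ^\op$ together with the full tower of higher coherences, and here naturality is not discharged by commutativity of addition: since the transition maps go to zero, every naturality square is a diagram in which both composites are null, and the functor must supply coherently chosen nullhomotopies at all levels. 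Equivalently, your generator-level datum is an $\bE_\infty$-algebra structure on $\1(0)$ over the graded Rees algebra $i^*\ins^0\1\we\1[t]$, i.e., the assertion that $\1[t]/t$ is a graded $\bE_\infty$-$\1[t]$-algebra --- a statement of the same genre as ``$\bS/\tau$ is an $\bE_\infty$-ring,'' which is a theorem with content, not bookkeeping. There is a plausible way to finish (observe that in $\Gr\Cscr$ the mapping space between unit objects concentrated in \emph{distinct} weights is contractible, so the required nullhomotopies and coherences all live in contractible spaces, and run an obstruction-theoretic extension from the canonical symmetric monoidal functor $\bZ^\delta\rightarrow\Gr\Cscr^\otimes$ classified by the invertible object $\1(1)$), but that analysis, or alternatively Lurie's identification of $\F\Cscr$ with modules over the Rees algebra in $\Gr\Cscr$ with $\gr=-\otimes_{\1[t]}\1(0)$, is precisely the missing mathematics; asserting that the universal property ``reduces every coherence'' to $n+m=m+n$ is not a proof of it.

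Two secondary problems. First, the paper assumes only that $\Cscr$ admits sequential colimits with tensor product preserving them in each variable --- not presentability --- so the ``presentably generated'' universal property you invoke is not available as stated (patchable via $\Ind(\Cscr)$ or by hand, but unaddressed). Second, you assume a coherent $\Cscr$-linear structure on $\gr^\star$ from the objectwise formula $\gr^k(\ins^0X\otimes\F)\we X\otimes\gr^k\F$; such a linear structure is coherence data of exactly the same kind as the monoidal structure you are trying to construct, so this is circular as written --- the identification of the universal extension with $\gr^\star$ should instead go through density of the objects $\ins^nX$ and preservation of the relevant colimits. Finally, your ``consistency check'' is incorrect as stated: the Day convolution $(\F\otimes\G)^n\we\colim_{s+s'\geq n}\F^s\otimes\G^{s'}$ is \emph{not} computed at a bottom index (the staircase $\{(s,s'):s+s'\geq n\}$ has no final object), and the K\"unneth identification of $\gr^n(\F\otimes\G)$ requires a genuine cofinality-and-cofiber argument.
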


\begin{proof}
    This is not a formal result and is proved in~\cite[Prop.~3.2.1]{lurie-rotation} when
    $\Cscr\we\Sp$. That proof works in general.
\end{proof}

The proposition gives one of the main ways to understand the tensor product on
filtered objects. Indeed, the tensor product on $\Gr\Cscr^\otimes$ is straightforward.
If $X(\star)$ and $Y(\star)$ are graded objects in $\Cscr$, then
$$(X\otimes_{\Gr\Cscr}
Y)(s)\we\bigoplus_{i+j=s} X(i)\otimes_\Cscr Y(j).$$
For example, this is used in the next lemma to understand the compatibility of
the Day convolution symmetric monoidal structure on $\F\Cscr$ with Beilinson
$t$-structures.

\begin{lemma}
    Suppose that $\Cscr$ is a stable $\infty$-category admitting sequential
    limits and colimits and equipped with a symmetric monoidal structure $\Cscr^\otimes$ whose
    tensor product is exact and preserves sequential colimits in each variable. If
    $\Cscr$ is additionally equipped with a $t$-structure $(\Cscr_{\geq
    0},\Cscr_{\leq 0})$ compatible with $\Cscr^{\otimes}$, then the induced
    Beilinson $t$-structure on $\F\Cscr$ is compatible with the Day convolution
    symmetric monoidal structure on $\F\Cscr^\otimes$.
\end{lemma}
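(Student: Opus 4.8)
The plan is to verify directly the three conditions in the definition of compatibility recorded above: that the Day convolution tensor product on $\F\Cscr$ is exact in each variable, that the unit of $\F\Cscr^\otimes$ lies in $(\F\Cscr)^\B_{\geq 0}$, and that $(\F\Cscr)^\B_{\geq 0}$ is closed under the tensor product. Exactness in each variable is essentially formal: fixing a filtration $\G^\star$, the endofunctor $\F^\star\mapsto\F^\star\otimes\G^\star$ of $\F\Cscr$ is the composite of the pointwise assignment $(i,j)\mapsto\F^i\otimes_\Cscr\G^j$ with the left Kan extension along $+\colon\bZ^\op\times\bZ^\op\to\bZ^\op$. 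The left Kan extension is a left adjoint, and for fixed $j$ the functor $\F^i\mapsto\F^i\otimes_\Cscr\G^j$ preserves finite colimits by the hypothesis that $\otimes_\Cscr$ is exact in each variable; since colimits in functor categories are computed pointwise, the composite preserves finite colimits, i.e. is exact. (The relevant Kan extension colimits are countable, so they exist and are preserved under the standing colimit hypotheses.)

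For the unit, I would identify it explicitly. The monoidal unit of $\bZ^\op$ under addition is $0$, so the Day convolution unit is the left Kan extension of $\1_\Cscr$ along $\{0\}\hookrightarrow\bZ^\op$, which is exactly $\ins^0(\1_\Cscr)$. Compatibility of the $t$-structure on $\Cscr$ with $\Cscr^\otimes$ gives $\1_\Cscr\in\Cscr_{\geq 0}$, and $\ins^0$ is $t$-exact by Lemma~\ref{lem:ins_t_exact} (the case $s=0$), so $\ins^0(\1_\Cscr)\in(\F\Cscr)^\B_{\geq 0}$.

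The substantive step is closure under $\otimes$, and here Proposition~\ref{prop:grtensor} does the work by letting us pass to associated gradeds. Given $\F^\star,\G^\star\in(\F\Cscr)^\B_{\geq 0}$, that is $\gr^i_\F\in\Cscr_{\geq -i}$ and $\gr^j_\G\in\Cscr_{\geq -j}$ for all $i,j$, the symmetric monoidality of $\gr^\star$ yields $\gr^n(\F^\star\otimes\G^\star)\we\bigoplus_{i+j=n}\gr^i_\F\otimes_\Cscr\gr^j_\G$. Using that $X[a]\otimes_\Cscr Y[b]\we(X\otimes_\Cscr Y)[a+b]$ together with closure of $\Cscr_{\geq 0}$ under $\otimes_\Cscr$, one gets that $X\in\Cscr_{\geq a}$ and $Y\in\Cscr_{\geq b}$ imply $X\otimes_\Cscr Y\in\Cscr_{\geq a+b}$; applied to each summand this gives $\gr^i_\F\otimes_\Cscr\gr^j_\G\in\Cscr_{\geq -n}$ whenever $i+j=n$. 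Since $\gr^n(\F^\star\otimes\G^\star)\in\Cscr_{\geq -n}$ for all $n$ will then confirm that $\F^\star\otimes\G^\star$ is Beilinson-connective, the proof reduces to checking that the coproduct of the summands stays in $\Cscr_{\geq -n}$.

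The one place that looks like a genuine obstacle is precisely this coproduct: the sum $\bigoplus_{i+j=n}$ is infinite, and the hypotheses do \emph{not} include compatibility of the $t$-structure on $\Cscr$ with filtered colimits, so one cannot simply argue that connectivity is preserved along the sequential colimit presenting the countable coproduct. The observation that dissolves this difficulty is that $\Cscr_{\geq -n}$ is closed under arbitrary existing coproducts for purely orthogonality-theoretic reasons: by the $t$-structure axioms $\Cscr_{\geq -n}={}^\perp\Cscr_{\leq -n-1}$, and for any family $\{X_k\}$ and any $Y\in\Cscr_{\leq -n-1}$ one has $\Map_\Cscr(\bigoplus_k X_k,Y)\we\prod_k\Map_\Cscr(X_k,Y)$, which is contractible as soon as each factor is. Hence no colimit-compatibility assumption is needed, the coproduct is connective, and the three conditions are all verified using only the stated hypotheses together with Proposition~\ref{prop:grtensor} and Lemma~\ref{lem:ins_t_exact}.
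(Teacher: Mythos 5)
Your proof is correct and follows the same route as the paper's: identify the Day convolution unit as $\ins^0\1_\Cscr$ and check Beilinson connectivity, then use the graded tensor formula $\gr^s(\F^\star\otimes_{\F\Cscr}\G^\star)\we\bigoplus_{i+j=s}\gr^i_\F\otimes_\Cscr\gr^j_\G$ from Proposition~\ref{prop:grtensor} for closure under $\otimes$. Your extra care about the infinite coproduct---noting that $\Cscr_{\geq -n}={}^\perp\Cscr_{\leq -n-1}$ is closed under arbitrary existing coproducts for orthogonality reasons, with no filtered-colimit compatibility needed---is a correct refinement of a point the paper's terser proof leaves implicit.
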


\begin{proof}
    The unit object of $\F\Cscr^\otimes$ is $\ins^0\1_\Cscr$ which has
    $\gr^0\ins^0\1_\Cscr\we\1_\Cscr$ and $\gr^s\ins^0\1_\Cscr\we 0$ for
    $s\neq 0$. This object is connective in the Beilinson $t$-structure. The
    tensor product formula
    $\gr^s(\F^\star\otimes_{\F\Cscr}\G^\star)\we\oplus_{i+j=s}\gr^i_\F\otimes_\Cscr\gr^j_\G$
    shows that if $\F^\star$ and $\G^\star$ are connective in the Beilinson
    $t$-structure, then so is $\F^\star\otimes_{\F\Cscr}\G^\star$.
\end{proof}

If $\Cscr$ is a stable $\infty$-category equipped with a symmetric monoidal
structure $\Cscr^\otimes$ whose tensor product is exact in each variable and a compatible $t$-structure $(\Cscr_{\geq 0},\Cscr_{\leq
0})$, then $\Cscr^\heart$ inherits a symmetric monoidal structure $\Cscr^{\heart,\otimes}$. Indeed, by
definition, the $\infty$-category $\Cscr_{\geq 0}$ of connective objects
inherits a symmetric monoidal structure $\Cscr_{\geq 0}^\otimes$ by compatibility. The unit object of
$\Cscr^{\heart,\otimes}$ is
$\pi_0\1_\Cscr$ and the tensor product is formed by forming the tensor product
in $\Cscr_{\geq 0}$ and applying $\pi_0$.

\begin{example}[Koszul symmetric monoidal structure]
    Let $\Cscr$ be a symmetric monoidal stable $\infty$-category with sequential colimits whose
    tensor product is exact and preserves sequential colimits in
    each variable with a compatible $t$-structure.
    Then, $\Gr\Cscr$ admits a symmetric monoidal structure via Day convolution and a compatible
    $t$-structure where a graded object $X(\star)$ is connective if $X(i)\in\Cscr_{\geq i}$ for all
    $i\in\bZ$. The heart $(\Gr\Cscr)^\heart$ is equivalent to the abelian category
    $\Gr(\Cscr^\heart)$ of graded
    objects in $\Cscr^\heart$ and the induced symmetric monoidal structure agrees with the usual
    monoidal structure on graded objects, with a symmetric braiding given by the Koszul sign rule.
    We will call this the Koszul symmetric monoidal structure on $\Gr(\Cscr^\heart)$.
\end{example}

\begin{lemma}\label{lem:homotopylaxmonoidal}
    Suppose that $\Cscr$ is a stable $\infty$-category with sequential colimits equipped with a symmetric
    monoidal structure $\Cscr^\otimes$ whose tensor product is exact and preserves sequential colimits in each variable and a compatible $t$-structure $(\Cscr_{\geq
    0},\Cscr_{\leq 0})$. Then, the functors $$\tau_{\geq\star}\colon\Cscr\rightarrow\F\Cscr$$ and
    $$\pi_*\colon\Cscr\rightarrow\Gr(\Cscr^\heart)$$
    admit lax symmetric monoidal structures, where $\F\Cscr$ is equipped with the Day convolution
    symmetric monoidal structure and $\Gr(\Cscr^\heart)$ is equipped with
    the Koszul symmetric monoidal structure.
\end{lemma}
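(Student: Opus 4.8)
The plan is to realize both functors as composites of (lax) symmetric monoidal functors, invoking the principle that the right adjoint of a symmetric monoidal functor carries a canonical lax symmetric monoidal structure (doctrinal adjunction; see~\cite[\S 7.3]{ha}).

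First I would treat $\tau_{\geq\star}$. Let $(\F\Cscr)^{\cn}\subseteq\F\Cscr$ be the full subcategory of those $\F^\star$ with $\F^s\in\Cscr_{\geq s}$ for all $s$. Since $\otimes$ is exact in each variable it commutes with the shift $[1]=\Sigma$, so compatibility of the $t$-structure with $\Cscr^\otimes$ upgrades from $\Cscr_{\geq 0}\otimes\Cscr_{\geq 0}\subseteq\Cscr_{\geq 0}$ to $\Cscr_{\geq i}\otimes\Cscr_{\geq j}\subseteq\Cscr_{\geq i+j}$. Combined with the Day convolution formula $(\F^\star\otimes\G^\star)^u\we\colim_{i+j\geq u}\F^i\otimes\G^j$ and the closure of $\Cscr_{\geq u}$ under colimits, this shows $(\F\Cscr)^{\cn}$ is closed under $\otimes$ and contains the unit $\ins^0\1_\Cscr$; hence the inclusion $\iota\colon(\F\Cscr)^{\cn}\hookrightarrow\F\Cscr$ is symmetric monoidal. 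The realization $|-|\colon\F\Cscr\to\Cscr$ is symmetric monoidal, being left Kan extension along the monoidal projection $\bZ^\op\to\ast$, so the composite $|-|\circ\iota\colon(\F\Cscr)^{\cn}\to\Cscr$ is symmetric monoidal as well.

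Next I would produce the adjunction identifying $\tau_{\geq\star}$ as the right adjoint of $|-|\circ\iota$. The inclusion $\iota$ admits a right adjoint $R$ computed termwise by the coreflection, $R(\G^\star)^s=\tau_{\geq s}\G^s$ (the transition maps exist since $\tau_{\geq s+1}\G^{s+1}$ is $s$-connective and maps to $\G^s$, hence factors through $\tau_{\geq s}\G^s$), and $|-|$ admits the constant-filtration functor $c$ as right adjoint (as recorded earlier in the text). Therefore $|-|\circ\iota$ has right adjoint $R\circ c$, and $(R\circ c)(X)^s=\tau_{\geq s}(c(X)^s)=\tau_{\geq s}X$, i.e. $R\circ c=\tau_{\geq\star}$. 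Since $|-|\circ\iota$ is symmetric monoidal, doctrinal adjunction equips $\tau_{\geq\star}$ with a lax symmetric monoidal structure, and one checks the resulting structure map is the expected multiplication $\tau_{\geq i}X\otimes\tau_{\geq j}Y\to\tau_{\geq i+j}(X\otimes Y)$ of Whitehead towers.

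Finally, for $\pi_*$ I would compose with the associated graded. By Proposition~\ref{prop:grtensor} the functor $\gr^\star\colon\F\Cscr\to\Gr\Cscr$ is symmetric monoidal, and from the defining cofiber sequences (as in Example~\ref{ex:whitehead}) one has $\gr^s\tau_{\geq\star}X\we\pi_sX[s]$; under the identification $(\Gr\Cscr)^\heart\we\Gr(\Cscr^\heart)$ for the Koszul $t$-structure, $\gr^\star\circ\tau_{\geq\star}$ is precisely $\pi_*$. The Koszul $t$-structure on $\Gr\Cscr$ is compatible with Day convolution (its unit is connective and connectives are closed under $\otimes$), its heart symmetric monoidal structure is by definition the Koszul one, and $\tau_{\leq 0}$ restricted to connectives is a symmetric monoidal localization, so $\pi_0^{\Gr}=\tau_{\leq 0}\circ\tau_{\geq 0}\colon\Gr\Cscr\to\Gr(\Cscr^\heart)$ is lax symmetric monoidal. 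Thus $\pi_*\we\pi_0^{\Gr}\circ\gr^\star\circ\tau_{\geq\star}$ is a composite of lax symmetric monoidal functors, hence lax symmetric monoidal for the Koszul structure. The main obstacle I anticipate is the bookkeeping of the first step: verifying cleanly that $(\F\Cscr)^{\cn}$ is a symmetric monoidal subcategory under Day convolution (the connectivity-of-colimits computation) and that $\iota$ and $|-|$ are genuinely symmetric monoidal, since everything downstream rests on feeding honestly symmetric monoidal functors into doctrinal adjunction.
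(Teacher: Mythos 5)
Your proof is correct and takes essentially the same route as the paper: the paper also singles out the subcategory of filtrations with $\F^s\in\Cscr_{\geq s}$ (the connective part of Raksit's positive, or ``Postnikov,'' $t$-structure $(\F\Cscr)^{\P}_{\geq 0}$), notes that its inclusion into $\F\Cscr$ is symmetric monoidal for Day convolution, and produces $\tau_{\geq\star}$ and $\pi_*$ as composites of the resulting lax symmetric monoidal right adjoints (the constant filtration functor, $\tau_{\geq 0}^{\P}$, and $\pi_0^{\P}$ into the heart $\Gr(\Cscr^\heart)$ with Koszul signs). Your only deviations --- not packaging the coreflective subcategory as a $t$-structure, and routing $\pi_*$ through $\gr^\star$ (Proposition~\ref{prop:grtensor}) and the Koszul $t$-structure on $\Gr\Cscr$ rather than through $\pi_0^{\P}$ directly --- are cosmetic rephrasings of the same mechanism.
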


\begin{proof}
    Consider the $t$-structure on $\F\Cscr$ with connective objects 
    those filtrations $\F^\star$ such that $\F^s$ is in $\Cscr_{\geq s}$ for all
    $s\in\bZ$. This is not a Beilinson $t$-structure, but it is still
    compatible with the Day convolution symmetric monoidal structure on
    $\Cscr$. It is called the positive $t$-structure by Raksit
    in~\cite{raksit}. We will call it the Postnikov $t$-structure and denote its connective objects
    by $(\F\Cscr)_{\geq 0}^\P$. In particular, since the inclusion $\F\Cscr_{\geq
    0}^\P\subseteq\F\Cscr$ is symmetric monoidal, the right adjoint $\tau_{\geq
    0}^\P$ is lax symmetric monoidal. The heart of the positive $t$-structure is
    $\Gr(\Cscr^\heart)$ with the Koszul sign rule. Now, $\tau_{\geq\star}$ is the composition of
    the lax symmetric monoidal functors
    $$\Cscr\xrightarrow{\text{constant}}\F\Cscr\xrightarrow{\tau_{\geq
    0}^\P}\F\Cscr_{\geq 0}^\P$$ and
    $\pi_*$ is the composition of the lax symmetric monoidal functors
    $$\Cscr\xrightarrow{\text{constant}}\F\Cscr\xrightarrow{\tau_{\geq
    0}^\P}\F\Cscr_{\geq 0}^\P\xrightarrow{\pi_0^\P}\Gr(\Cscr^\heart).$$
    This completes the proof.
\end{proof}

\begin{construction}[The $\E^r$-monoidal structure]
    Let $\Cscr^\otimes$ be a stable symmetric monoidal $\infty$-category which admits sequential
    limits and colimits, whose tensor product is exact and preserves sequential colimits in each
    variable, and which is equipped with a compatible $t$-structure. We view the
    $\E^r$-page functor as a functor from $\F\Cscr$ to $\Gr(\Ch^\bullet(\Cscr^\heart))$. 
    We want to view the latter category as being symmetric monoidal. We do this by observing that
    it is the heart of $\F(\F\Cscr)$ with respect to the $t$-structure on bifiltrations
    $\F^{\star,\star}$ where the connective objects are those where $\F^{i,\star}$ is
    $i$-connective in the Beilinson $t$-structure. This is $t$-structure is ``half-Postnikov, half-Beilinson.''
    Call this the $\E^1$-symmetric monoidal structure
    $\Gr(\Ch^\bullet(\Cscr^\heart))^{\otimes_{\E^1}}$. The $\E^r$-monoidal structure is obtained by
    transport of structure via the reindexing functor given by
    $$\begin{pmatrix}0&-1\\1&2\end{pmatrix}^{-r+1}.$$
\end{construction}

\begin{theorem}\label{thm:multiplicativity}
    Let $\Cscr^\otimes$ be a stable symmetric monoidal $\infty$-category which admits sequential
    limits and colimits, whose tensor product is exact and preserves sequential colimits in each
    variable, and which is equipped with a compatible $t$-structure. 
    For each $r\geq 1$, the $\E^r$-page functor upgrades to a lax symmetric monoidal functor
    $\F\Cscr^\otimes\rightarrow\Gr(\Ch^\bullet(\Cscr^\heart))^{\otimes_{\E^r}}$.
\end{theorem}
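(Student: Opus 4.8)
The plan is to realize $\E^r$ as a composite of lax symmetric monoidal functors, so that the conclusion becomes formal. By construction the $\E^r$-monoidal structure $\otimes_{\E^r}$ is transported from the $\E^1$-monoidal structure $\otimes_{\E^1}$ along the symmetric monoidal reindexing equivalence given by $\begin{pmatrix}0&-1\\1&2\end{pmatrix}^{-r+1}$, and $\E^r_{s,t}(\F)$ is by definition an $\E^1$-term of $\Dec^{(r-1)}(\F)$. Thus $\E^r$ factors as the reindexing equivalence applied to $\E^1\circ\Dec^{(r-1)}$, and it suffices to prove two things: that the $\E^1$-page functor $\E^1\colon\F\Cscr\to\Gr(\Ch^\cs(\Cscr^\heart))^{\otimes_{\E^1}}$ is lax symmetric monoidal, and that $\Dec\colon\F\Cscr\to\F\Cscr$ is lax symmetric monoidal for the Day convolution structure (whence so is every iterate $\Dec^{(r-1)}$).

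For the first point I would apply Lemma~\ref{lem:homotopylaxmonoidal} \emph{one categorical level up}, with $\Cscr$ replaced by $\F\Cscr$ equipped with the Beilinson $t$-structure. One first checks the hypotheses: the Day convolution tensor on $\F\Cscr$ is exact and preserves sequential colimits in each variable (both computed pointwise from $\Cscr$), and the Beilinson $t$-structure is compatible with it by the compatibility lemma established above. The lemma then yields a lax symmetric monoidal $\pi_\ast^\B\colon\F\Cscr\to\Gr\big((\F\Cscr)^{\B\heart}\big)$, where the target carries the Koszul structure coming from the positive $t$-structure on $\F(\F\Cscr)$ whose connectives are the bifiltrations $\F^{i,\star}$ that are $i$-connective in the Beilinson $t$-structure. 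This is exactly the half-Postnikov, half-Beilinson $t$-structure, its heart is $\Gr(\Ch^\cs(\Cscr^\heart))$, and the induced Koszul structure is by definition $\otimes_{\E^1}$; since $\pi_\ast^\B$ is the $\E^1$-page functor (Definition~\ref{def:e1}), this settles the first point.

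For the second point I would factor $\Dec=|-|_{\mathrm{inner}}\circ\tau_{\geq\star}^\B$, where $\tau_{\geq\star}^\B\colon\F\Cscr\to\F(\F\Cscr)$ is the Beilinson Whitehead tower and $|-|_{\mathrm{inner}}\colon\F(\F\Cscr)\to\F\Cscr$ realizes the inner filtration direction. The Whitehead tower is lax symmetric monoidal by the same application of Lemma~\ref{lem:homotopylaxmonoidal} to $\F\Cscr$: it is the composite of the lax symmetric monoidal constant functor (right adjoint to the strong monoidal realization) with the lax symmetric monoidal $\tau_{\geq 0}$ for the half-Postnikov, half-Beilinson $t$-structure. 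The inner realization is postcomposition with the strong symmetric monoidal, colimit-preserving functor $|-|\colon\F\Cscr\to\Cscr$, hence strong symmetric monoidal for the outer Day convolution. The composite $\Dec$ is therefore lax symmetric monoidal.

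I expect the main obstacle to be the bookkeeping identifying the Koszul structure produced abstractly by Lemma~\ref{lem:homotopylaxmonoidal} applied to $\F\Cscr$ with the hand-defined $\E^1$-monoidal structure $\otimes_{\E^1}$: one must match the Koszul sign conventions on $\Gr(\Ch^\cs(\Cscr^\heart))$ and verify that the two nested Day convolution structures on $\F(\F\Cscr)$ assemble so that the ``positive'' $t$-structure of the lemma really is the half-Postnikov, half-Beilinson one, with heart $\Gr(\Ch^\cs(\Cscr^\heart))$. Once this identification, together with the compatibility of the relevant $t$-structures with Day convolution, is in hand, the statement for $\E^r$ follows formally, being a composite of lax symmetric monoidal functors with the symmetric monoidal transport-of-structure equivalence.
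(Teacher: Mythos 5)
Your proposal is correct and follows essentially the same route as the paper: both obtain the $r=1$ case by applying Lemma~\ref{lem:homotopylaxmonoidal} to $\F\Cscr$ with the Beilinson $t$-structure (identifying the resulting Koszul/half-Postnikov-half-Beilinson structure with $\otimes_{\E^1}$), deduce lax symmetric monoidality of $\Dec$ from the same lemma applied to $\tau_{\geq\star}^\B$ together with the strong symmetric monoidal inner realization, and conclude by composition and reindexing. Your write-up merely makes explicit two points the paper leaves implicit, namely the strong monoidality of $|-|$ and the matching of the abstract Koszul structure with the hand-defined $\E^1$-monoidal structure.
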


\begin{proof}
    For $r=1$, the result follows by applying Lemma~\ref{lem:homotopylaxmonoidal} with respect to
    the Beilinson $t$-structure. Now, we claim that the functor
    $\F^\star\mapsto\tau_{\geq\star}^\B(\F)$ from $\F\Cscr$ to $\F\F\Cscr$ is lax symmetric
    monoidal. This is another application of Lemma~\ref{lem:homotopylaxmonoidal}. Thus,
    $\Dec^{(r-1)}$ is lax symmetric monoidal by composition. It follows that
    $\F^\star\mapsto\E^1(\Dec^{(r-1)}(\F))$ is lax symmetric monoidal with target the
    $\E^1$-symmetric monoidal structure on $\Gr(\Ch^\bullet(\Cscr^\heart))$.
    The theorem follows by reindexing.
\end{proof}

\begin{corollary}\label{cor:multiplicativity}
    Let $\Cscr^\otimes$ be a stable symmetric monoidal $\infty$-category which admits sequential
    limits and colimits, whose tensor product is exact and preserves sequential colimits in each
    variable, and which is equipped with a compatible $t$-structure. Suppose that $\Oscr$ is an
    $\infty$-operad in $\Cscr_{\geq 0}^\otimes$.
    For each $r\geq 1$ the $\E^r$-page functor upgrades
    to a functor
    $$\E^r\colon\Alg_\Oscr(\F\Cscr^\otimes)\rightarrow\Alg_\Oscr(\Gr(\Ch^\bullet(\Cscr^\heart))^{\otimes_{\E^r}}).$$
\end{corollary}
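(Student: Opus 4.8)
The plan is to deduce the corollary from Theorem~\ref{thm:multiplicativity} by the functoriality of operadic algebra objects in their coefficient symmetric monoidal $\infty$-category. Recall that for an $\infty$-operad $\Oscr^\otimes$ and a symmetric monoidal $\infty$-category $\Dscr^\otimes$, the $\infty$-category $\Alg_\Oscr(\Dscr)$ is by definition the $\infty$-category of maps of $\infty$-operads $\Oscr^\otimes\rightarrow\Dscr^\otimes$, i.e.\ the full subcategory of $\Fun_{\Fin_\ast}(\Oscr^\otimes,\Dscr^\otimes)$ spanned by those functors over $\Fin_\ast$ which carry inert morphisms to inert morphisms; see \cite[Chap.~2]{ha}. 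A lax symmetric monoidal functor $F\colon\Cscr^\otimes\rightarrow\Dscr^\otimes$ between symmetric monoidal $\infty$-categories is exactly the datum of such a map of $\infty$-operads, and since the composite of two maps of $\infty$-operads is again a map of $\infty$-operads, postcomposition with $F$ defines a functor $\Alg_\Oscr(\Cscr)\rightarrow\Alg_\Oscr(\Dscr)$ for every $\infty$-operad $\Oscr$.

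The key step is then to apply this with $F$ taken to be the $\E^r$-page functor. By Theorem~\ref{thm:multiplicativity}, for each $r\geq 1$ the functor $\E^r$ upgrades to a lax symmetric monoidal functor $\F\Cscr^\otimes\rightarrow\Gr(\Ch^\bullet(\Cscr^\heart))^{\otimes_{\E^r}}$, hence to a map of the underlying $\infty$-operads. Postcomposing an $\Oscr$-algebra $A\colon\Oscr^\otimes\rightarrow\F\Cscr^\otimes$ with this map produces an $\Oscr$-algebra valued in $\Gr(\Ch^\bullet(\Cscr^\heart))^{\otimes_{\E^r}}$, and this assignment is functorial in $A$. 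This yields the desired functor $\E^r\colon\Alg_\Oscr(\F\Cscr^\otimes)\rightarrow\Alg_\Oscr(\Gr(\Ch^\bullet(\Cscr^\heart))^{\otimes_{\E^r}})$.

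There is essentially no obstacle beyond Theorem~\ref{thm:multiplicativity} itself, in which the real work was already carried out: the only points to confirm are that both source and target are genuinely symmetric monoidal $\infty$-categories, so that the two algebra $\infty$-categories are defined and postcomposition makes sense. The source carries the Day convolution symmetric monoidal structure, and the target is symmetric monoidal because the $\E^r$-monoidal structure is realized as the heart of a $t$-structure on bifiltered objects, transported along the reindexing equivalence. The hypothesis on $\Oscr$ does not enter the formal deduction; it is recorded only to fix the setting in which the lax structures of Lemma~\ref{lem:homotopylaxmonoidal}, built from connective covers, naturally live.
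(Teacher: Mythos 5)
Your proposal is correct and matches the paper's (implicit) argument exactly: the paper states this corollary without proof, treating it as the standard consequence of Theorem~\ref{thm:multiplicativity} that a lax symmetric monoidal functor, being a map of $\infty$-operads, induces a functor $\Alg_\Oscr(\Cscr)\rightarrow\Alg_\Oscr(\Dscr)$ by postcomposition. Your write-up simply makes this routine deduction explicit, including the correct observation that the connectivity hypothesis on $\Oscr$ plays no role in the formal step itself.
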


\begin{example}[Rings]
    If $\F^\star
    R$ is a filtered $\bE_1$-ring spectrum, then each page $\E^r$ of the
    spectral sequence for $\F^\star R$ is an associative algebra object in
    $\Gr(\Ch^\bullet(\Cscr^\heart))$. If $\F^\star R$ is an $\bE_\infty$-ring
    spectrum, then each page $\E^r$ of the spectral sequence for $\F^\star R$
    is a commutative algebra object in $\Gr(\Ch^\bullet(\Cscr^\heart))$. These
    cases follow from Theorem~\ref{thm:multiplicativity} applied to the $\bE_1$
    and $\bE_\infty$ operads.

    To make this more explicit, if $\F^\star R$ is an $\bE_\infty$-ring, then
    $\E^r_{-r\cs,(r-1)\cs}$
    is a cdga in $\Cscr^\heart$ and the graded collection
    $$\{\E^r_{(r-1)t-r\cs,-(r-2)t+(r-1)\cs}\}_{t\in\bZ}$$ forms a graded-commutative
    ring in graded cochain complexes, meaning that there are morphisms
    $\E^r_{s,t}\otimes\E^r_{s',t'}\rightarrow\E^r_{s+s',t+t'}$ satisfying the
    bigraded Koszul sign rule $xy=(-1)^{(s+t)(s'+t')}yx$ and satisfying the following Leibniz
    rule for the horizontal differentials:
    $\d^1(xy)=\d^1(x)y+(-1)^{s+t}x\d^1(y)$ if $x\in\E^r_{s,t}$.
    These sign rules become the ordinary sign rules on the total complex
    associated to this graded cochain complex. These are justified by the fact
    $\E^r_{s,t}$ has weight $(n-1)s+nt$ and cohomological degree
    $(n-2)s+(n-1)t$ by Lemma~\ref{lem:lookup}, so the total degree $s+t$ has
    the same parity as $(n-1)s+nt+(n-2)s+(n-1)t=(2n-3)s+(2n-1)t$.
\end{example}

\section{The Atiyah--Hirzebruch spectral sequence}\label{sec:mysterious}

Let $X$ be a CW complex, which we view as an anima $X$ with an exhaustive increasing
filtration $\F_\star X$ where $\F_sX=\emptyset$ for $s<0$ and each
$\F_sX/\F_{s-1}$ is equivalent to a bouquet of
$s$-spheres, called the $s$-cells of $X$, for $s\geq 0$.
Taking singular cochains $\R\Gamma(X,\bZ)$ we obtain an object of $\D(\bZ)$ which we equip with
the decreasing filtration
$$\F^s\R\Gamma(X,\bZ)=\mathrm{fib}\left(\R\Gamma(X,\bZ)\rightarrow\R\Gamma(X_{s-1},\bZ)\right).$$
This filtration is complete and exhaustive with graded pieces given by
$\gr^s\R\Gamma(X,\bZ)\we\prod_{\text{$s$-cells}}\bZ[-s]$.
The associated (cohomological) spectral sequences is
$$\E_1^{s,t}=\H^{s+t}\gr^s\R\Gamma(X,\bZ)\iso\H^{s+t}\prod_{\text{$s$-cells}}\bZ[-s]\iso\begin{cases}
    \prod_{\text{$s$-cells}}\bZ&\text{if $t=0$ and}\\
    0&\text{otherwise.}
\end{cases}$$
The spectral sequence is thus concentrated on the ray from $(0,0)$ to
$(\infty,0)$, giving a cochain complex
$$0\rightarrow\prod_{\text{$0$-cells}}\bZ\rightarrow\prod_{\text{$1$-cells}}\bZ\rightarrow\prod_{\text{$2$-cells}}\bZ\rightarrow\cdots.$$
The differentials in this cochain
complex are precisely the differentials in the cochain complex computing the CW
cohomology of $X$. In particular, $\pi_0^\B(\F^\star\R\Gamma(X,\bZ))$ is the
CW cohomology cochain complex.
Thus, $\E^2_{s,0}\iso\E^\infty_{s,0}\iso\H^s_\CW(X,\bZ)$ and the
spectral sequence collapses at the $\E^2$-page. One finds that
$$\Dec(\F)^s\R\Gamma(X,\bZ)\we\begin{cases}
    0&\text{if $s>0$ and}\\
    \R\Gamma(X,\bZ)&\text{if $s\leq 0$.}
\end{cases}$$

\begin{remark}[Multiplicative structures and CW cohomology]
    It is well-known that the $\bE_\infty$-algebra $\R\Gamma(X,\bZ)$ cannot generally be computed
    by a cdga. This implies
    that for such $X$ no CW filtration $\F^\star\R\Gamma(X,\bZ)$ can be given the
    structure of an $\bE_\infty$-algebra object in $\FD(\bZ)$. If it could be,
    then $\pi_0^\B$ would admit the structure of a commutative algebra object by the results of
    Section~\ref{sec:multiplicativity}; this would be a cdga computing $\R\Gamma(X,\bZ)$.
\end{remark}

Suppose that $X$ is a CW complex and fix a
spectrum $M$. We will write $\H^i(M)=\pi_{-i}M$ below for convenience.
There are two classical ways to construct a spectral sequence for computing $\pi_*\R\Gamma(X,M)$,
either by filtering $M$ or by filtering $X$.

First, consider the cellular filtration given as in the previous section by
$$\F^s\R\Gamma(X,M)=\mathrm{fib}\left(\R\Gamma(X,M)\rightarrow\R\Gamma(X_{s-1},M)\right).$$
The associated graded pieces are
$$\gr^s_\F\R\Gamma(X,M)\we(\R\Gamma(X_{s-1},M)/\R\Gamma(X_{s},M))[-1]\we\prod_{\text{$s$-cells}}M[-s].$$
The associated spectral sequence has
$$\E_1^{s,t}(\F)=\H^{s+t}(\prod_{\text{$s$-cells}}M[-s])\iso\prod_{\text{$s$-cells}}\H^t(M)\Rightarrow\H^{s+t}(X,M).$$
By d\'evissage one can reduce to the case when $M\we\bZ$ to compute the
differential on the $\E_1$-page as the CW cohomology differential.
It follows that $$\E_2^{s,t}(\F)=\H^s(X,\H^t(M)).$$

Second, consider the Whitehead tower $$\G^\star M=\tau_{\geq\star}M$$
of $M$ and take cochains to obtain a new filtration
$\G^\star\R\Gamma(X,M)=\R\Gamma(X,\tau_{\geq\star}M)$ with graded pieces
$\gr^s_\G\R\Gamma(X,M)\we\R\Gamma(X,\gr^sM)\we\R\Gamma(X,(\H^{-s}M)[s])$.
The $\E_1$-page of the associated spectral sequence is
$$\E_1^{s,t}(\G)=\H^{s+t}(X,(\H^{-s}M)[s])\iso\H^{2s+t}(X,\H^{-s}M)\Rightarrow\H^{s+t}\R\Gamma(X,M).$$
Via the standard reindexing of an $\E_1$-spectral sequence to an
$'\E_2$-spectral sequence discussed in Section~\ref{sec:reindexing}, one obtains
an $'\E_2$-spectral sequence
$$'\E_2^{s,t}(\G)=\H^s(X,\H^t(M))\Rightarrow\H^{s+t}(X,M).$$
It is natural to ask for the relationship between these two spectral sequences.

\begin{proposition}
    There is an equivalence
    $\Dec(\F)^\star\R\Gamma(X,M)\we\G^\star\R\Gamma(X,M)$ of filtered spectra, functorial in $M$
    and the CW complex structure on $X$.
\end{proposition}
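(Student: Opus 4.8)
The plan is to exhibit the equivalence by comparing associated graded pieces and then upgrading to an equivalence of filtrations via completeness. Recall that $\F^\star\R\Gamma(X,M)$ is the cellular filtration and $\G^\star\R\Gamma(X,M)=\R\Gamma(X,\tau_{\geq\star}M)$ is the Whitehead-tower filtration. Both are complete and exhaustive filtrations on $\R\Gamma(X,M)$ in $\D(\bZ)$. The first step is to identify the graded pieces of the d\'ecalage. By Remark~\ref{rem:graded_decalage}, $\gr^n_{\Dec(\F)}\R\Gamma(X,M)\we|\pi_n^\B(\F)|[n]$, where $\pi_n^\B(\F)$ is the $n$th cochain complex on the $\E^1$-page of the cellular spectral sequence. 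From the computation preceding the proposition, $\E_1^{s,t}(\F)\iso\prod_{\text{$s$-cells}}\H^t(M)$, so the cochain complex $\pi_n^\B(\F)$ (after the appropriate reindexing) is precisely the CW cochain complex of $X$ with coefficients in $\H^{-n}(M)=\pi_n M$. Its realization computes $\R\Gamma(X,\H^{-n}(M))$ by Lemma~\ref{lem:cohomology_of_complex}, so that $\gr^n_{\Dec(\F)}\R\Gamma(X,M)\we\R\Gamma(X,(\pi_nM))[n]$.

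On the other side, the Whitehead tower has graded pieces $\gr^n_\G\R\Gamma(X,M)\we\R\Gamma(X,\gr^nM)\we\R\Gamma(X,(\pi_nM)[n])\we\R\Gamma(X,\pi_nM)[n]$, using that $\R\Gamma(X,-)$ is exact and commutes with shifts. So the two filtrations have abstractly equivalent graded pieces. The second and more substantial step is to produce an actual map of filtrations realizing this identification on gradeds. The natural candidate is the following: there is a canonical map $\tau_{\geq n}^\B(\F)\R\Gamma(X,M)\to\R\Gamma(X,\tau_{\geq n}M)$ obtained by observing that the cellular filtration is functorial in $M$, hence the Beilinson connective cover can be compared with the functor $M\mapsto\R\Gamma(X,\tau_{\geq n}M)$ after taking realizations. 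Concretely, I would construct the comparison by using the fact that both constructions are functorial lax-monoidal operations on $M$ and checking the comparison cell-by-cell, i.e.\ reducing to the case where $X$ is a single sphere $S^k$ by the exhaustive CW filtration and the fact that both $\Dec$ and $\G$ commute with the relevant (co)limits over the skeletal filtration.

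The hard part will be constructing the comparison map coherently rather than just abstractly matching gradeds; a naive degree-by-degree match does not automatically assemble into a map of filtered objects. The cleanest route is d\'evissage: reduce to $M$ an Eilenberg--MacLane spectrum $\H A$ (equivalently, to a single homotopy group) by filtering $M$ by its own Whitehead tower and using that both sides are exact functors of $M$ commuting with the relevant limits, so it suffices to treat the graded pieces $\gr^n M=\pi_n M[n]$. For $M=A[k]$ a shifted discrete module, the Whitehead filtration $\G^\star\R\Gamma(X,A[k])$ is concentrated in a single weight, and one checks directly that the cellular filtration's d\'ecalage collapses to the same single-weight filtration: here $\Dec(\F)$ of a filtration whose spectral sequence is concentrated on one horizontal line turns that line into the associated graded, exactly as in the model computation $\Dec(\F)^s\R\Gamma(X,\bZ)$ displayed earlier in this section. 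Once the equivalence is established for each $\gr^n M$ compatibly, exactness of $\R\Gamma(X,-)$ and of both filtration functors, together with completeness (so that an equivalence on all graded pieces is an equivalence of complete filtrations), yields the equivalence $\Dec(\F)^\star\R\Gamma(X,M)\we\G^\star\R\Gamma(X,M)$ for general $M$. Functoriality in $M$ and in the CW structure on $X$ is automatic from the functoriality of every construction used.
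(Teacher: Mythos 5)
Your reduction steps correctly identify the crux---``a naive degree-by-degree match does not automatically assemble into a map of filtered objects''---but the proposal never actually closes it, and the two routes you sketch for producing the comparison map do not work as stated. The cell-by-cell reduction is not available: $\R\Gamma(X,M)\we\lim_s\R\Gamma(X_s,M)$ is an inverse limit over skeleta, and $\Dec$ involves the realization $|-|$, a sequential colimit, which does not commute with this limit; knowing the statement for each sphere does not produce a natural filtered map for $X$. The d\'evissage in $M$ rests on the claim that ``both sides are exact functors of $M$,'' which is false for the d\'ecalage side: $\Dec$ is built from Beilinson truncations, which are not exact, and the paper is explicit about this failure (Remark~\ref{rem:no_short_exact}; cf.\ Lemma~\ref{lem:truncation_exact} for the special conditions under which truncation does preserve cofiber sequences). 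In the case at hand exactness happens to hold---the cofiber sequences $\tau_{\geq n+1}M\to\tau_{\geq n}M\to(\pi_nM)[n]$ induce surjections on the relevant $\pi_\ast^\B$ of the cellular filtrations---but you would have to verify this, and even then assembling the Eilenberg--MacLane cases ``compatibly'' across the Postnikov tower is exactly the coherence problem you started with, now in a limit where convergence and the interaction of $\prod_{\text{$s$-cells}}$ with the tower must also be controlled.

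The paper's proof avoids all of this with one observation that your proposal is missing: the coherent comparison map exists for free because the two filtrations fit into a single bifiltration. Set $\H^{a,\star}=\F^\star\R\Gamma(X,\tau_{\geq a}M)$, the cellular filtration with coefficients in the Whitehead tower. For fixed $a$ its graded pieces are $\prod_{\text{$s$-cells}}(\tau_{\geq a}M)[-s]$, which is $(a-s)$-connective; hence $\H^{a,\star}$ is $a$-connective in the Beilinson $t$-structure and the tautological map $\H^{a,\star}\to\F^\star$ factors canonically through $\tau_{\geq a}^\B(\F)$. That factorization is the natural map you were trying to build, and it is an equivalence by exactly the argument you propose for the final step: both sides are complete (Lemma~\ref{lem:truncationscomplete}), so it suffices to compare graded pieces, where
$$\prod_{\text{$s$-cells}}(\tau_{\geq a}M)[-s]\we\tau_{\geq a-s}\Bigl(\prod_{\text{$s$-cells}}M[-s]\Bigr)\we\gr^s\tau_{\geq a}^\B(\F)$$
using compatibility of the $t$-structure on spectra with products. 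Taking realizations then gives $\G^a\we|\H^{a,\star}|\we|\tau_{\geq a}^\B(\F)|=\Dec(\F)^a$, with functoriality in $M$ and the CW structure manifest. So your outer frame (map, completeness, graded pieces) matches the paper, but without the bifiltration---or some substitute construction of the map---the proof has a genuine gap, since every subsequent step is conditional on that map existing.
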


\begin{proof}
    Consider the bifiltration $\H^{a,b}=\fib\left(\R\Gamma(X,\tau_{\geq
    a}M)\rightarrow\R\Gamma(X_{s-1},\tau_{\geq a}M)\right)$. If $a$ is fixed, then
    $\tfrac{\H^{a,s}}{\H^{a,s+1}}\we\prod_{\text{$s$-cells}}(\tau_{\geq a} M)[-s]$, which is $(a-s)$-connective. In
    particular, the natural map $\H^{a,\star}\rightarrow\F^\star$ factors through $\tau_{\geq
    a}^\B(\F)^\star$. It is in fact an equivalence. Indeed, both $\H^{a,\star}$ and $\tau_{\geq
    a}^\B(\F)^\star$ are complete filtrations (the latter because $\F^\star$ is, see
    Lemma~\ref{lem:truncationscomplete}), so it suffices to check this on associated graded pieces. However,
    $$\frac{\H^{a,s}}{\H^{a,s+1}}\we\prod_{\text{$s$-cells}}(\tau_{\geq a}M)[-s]\we\tau_{\geq
    a-s}\left(\prod_{\text{$s$-cells}}M[-s]\right)\we\tau_{\geq a-s}\gr^s_\F\we\gr^s\tau_{\geq a}^\B(\F),$$
    where the second equivalence follows from the compatibility of the $t$-structure on $\Sp$ with
    arbitrary products. It follows that $\H^{a,\star}\we\tau_{\geq a}^\B(\F)$ and hence that
    $|\H^{a,\star}|\we\Dec(\F)^a$. However, $|\H^{a,\star}|\we\G^a$. This completes the proof.
\end{proof}

The following consequence was proved by Maunder in~\cite[Thm.~3.3]{maunder} with different methods.

\begin{corollary}
    There is an isomorphism of spectral sequences $\E^r_{s,t}(\F)\iso{'\E^r_{s,t}(\G)}$ starting at
    the second page.
\end{corollary}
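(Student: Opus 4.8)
The plan is to deduce the corollary by combining the preceding Proposition with the comparison Theorem~\ref{thm:decalage}, so that the only remaining content is a matching of indexing conventions. First I would invoke the Proposition, which furnishes a functorial equivalence $\Dec(\F)^\star\R\Gamma(X,M)\we\G^\star\R\Gamma(X,M)$ of filtered spectra. Since the entire spectral sequence of a filtration is an invariant of that filtration up to equivalence, this already yields natural isomorphisms $\E^r_{s,t}(\Dec(\F))\iso\E^r_{s,t}(\G)$ on every page, compatible with the differentials.

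Next I would apply Theorem~\ref{thm:decalage}, which gives, for $r\geq 1$, natural isomorphisms $\E^r_{-t,s+2t}(\Dec(\F))\iso\E^{r+1}_{s,t}(\F)$ compatible with the $\d^r$ and $\d^{r+1}$ differentials. Shifting the index by one (replacing $r$ by $r-1$), this reads, for $r\geq 2$,
$$\E^{r-1}_{-t,s+2t}(\Dec(\F))\iso\E^r_{s,t}(\F).$$
Feeding in the equivalence $\Dec(\F)\we\G$ from the previous step then produces $\E^r_{s,t}(\F)\iso\E^{r-1}_{-t,s+2t}(\G)$ for all $r\geq 2$.

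Finally I would identify the right-hand side with $'\E^r_{s,t}(\G)$. The key observation is that the $'\E$-convention of Section~\ref{sec:reindexing} is exactly the reindexing encoded by the matrix $\begin{pmatrix}0&-1\\1&2\end{pmatrix}$, i.e.\ the coordinate change $(s,t)\mapsto(-t,s+2t)$ together with a renumbering shifting each page up by one; this is the very same transformation governing Theorem~\ref{thm:decalage}. Hence by definition $'\E^r_{s,t}(\G)=\E^{r-1}_{-t,s+2t}(\G)$ for $r\geq 2$, and the displayed isomorphism becomes the desired $\E^r_{s,t}(\F)\iso{'\E^r_{s,t}(\G)}$, valid from the second page on and compatible with differentials. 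Naturality in $M$ and in the CW structure on $X$ is inherited from the naturality already recorded in the Proposition.

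I expect the only real obstacle to be bookkeeping: confirming that the coordinate transformation appearing in Theorem~\ref{thm:decalage} literally coincides with the $\E^1\to{'\E^2}$ reindexing, and that the single page-shift is consistent on both sides so that the statement begins precisely at the second page. Since both the Proposition and the comparison theorem are already established, and each produces isomorphisms compatible with the differentials, no further analytic input is required.
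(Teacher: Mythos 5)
Your proposal is correct and is exactly the argument the paper intends: the corollary is an immediate consequence of the preceding Proposition combined with Theorem~\ref{thm:decalage} and the observation that the $'\E$-reindexing of Section~\ref{sec:reindexing} is the same coordinate change $(s,t)\mapsto(-t,s+2t)$ with a single page shift appearing in that theorem. The bookkeeping you flag (including the homological versus cohomological indexing of the two spectral sequences) checks out, and the differential compatibility is supplied by the functoriality of Lurie's construction together with the compatibility clause in Theorem~\ref{thm:decalage}.
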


One often says that $\E^r(\F)$ and $'\E^r(\G)$ agree from the $\E^2$-page on.

\bibliographystyle{amsplain}
\bibliography{ss}
\addcontentsline{toc}{section}{References}

\medskip
\noindent
\textsc{Department of Mathematics, Northwestern University}\\
\textsc{Max-Planck-Institut für Mathematik Bonn}\\
{\ttfamily antieau@northwestern.edu}

\end{document}